\preto\subequations{\ifhmode\unskip\fi}
\theoremstyle{definition}
\newtheorem{theorem}{Theorem}
\newtheorem{lemma}{Lemma}
\newtheorem{remark}{Remark}
\newtheorem{assumption}{Assumption}
\title{Distributionally Robust Optimization with Multimodal Decision-Dependent Ambiguity Sets}
\author{Xian Yu\thanks{Department of Integrated Systems Engineering, The Ohio State University, Email: {\tt yu.3610@osu.edu};}~~~Beste Basciftci\thanks{Department of Business Analytics, Tippie College of Business, University of Iowa, Email: {\tt beste-basciftci@uiowa.edu}}}
\date{}
\begin{document}

\maketitle

\begin{abstract}
We consider a two-stage distributionally robust optimization (DRO) model with multimodal uncertainty, where both the mode probabilities and uncertainty distributions could be affected by the first-stage decisions. To address this setting, we propose a generic framework by introducing a $\phi$-divergence based ambiguity set to characterize the decision-dependent mode probabilities and further consider both moment-based and Wasserstein distance-based ambiguity sets to characterize the uncertainty distribution under each mode. We identify two special $\phi$-divergence examples (variation distance and $\chi^2$-distance) and provide specific forms of decision dependence relationships under which we can derive tractable reformulations. Furthermore, we investigate the benefits of considering multimodality in a DRO model compared to a single-modal counterpart through an analytical analysis. 
{\color{black}Additionally, we develop a separation-based decomposition algorithm to solve the resulting multimodal decision-dependent DRO models with finite convergence and optimality guarantee under certain settings.} 
We provide {\color{black}a detailed computational study over two example problem settings, the facility location problem and shipment planning problem with pricing,} to illustrate our results, which demonstrate that omission of multimodality {\color{black}or} decision-dependent uncertainties within DRO frameworks result in inadequately performing solutions with worse in-sample and out-of-sample performances under various settings. 
{\color{black}We further demonstrate the speed-ups obtained by the solution algorithm against the off-the-shelf solver over various instances.}
\end{abstract}

~\\
{\bf Keywords:} distributionally robust optimization, multimodal uncertainty, decision-dependent uncertainty, moment-based ambiguity set, Wasserstein distance-based ambiguity set

\section{Introduction}

In the field of optimization under uncertainty, decision-making problems mainly consider a single well-defined distribution to characterize the underlying uncertainties. On the other hand, in various problem contexts, this conventional unimodality consideration simplifies the representation of the underlying data, that may have the tendency of having several spatially separated regions with increased probabilities, which
can be associated with the potential different modes of the distributions that can be described by multimodality. 
Another level of complexity within these problems arises when there is an interplay between the decisions and the underling uncertainties, which can impact the likelihood of each mode of the distribution by making one mode to be more likely than the other one, and the characteristics of each corresponding distribution itself.
Furthermore, the assumption of the full distribution information within these decision-making problems can lead to solutions that are not robust in case the distribution is misspecified or there is lack of data to estimate this distribution accurately. 
To address these key challenges, in this paper, we propose the requisite optimization under uncertainty framework that jointly takes into account multimodal characteristics of the underlying data, decision-dependent uncertainties, and distributional ambiguities. 

Multimodalities in data sets have been observed in many different problem contexts, 
which should be effectively represented and integrated into the subsequent decision-making processes. 
The first example setting is the newsvendor problem, which is a fundamental problem in optimizing supply chain operations \citep{Zipkin2000}. Here, the goal is to determine the ordering quantities and the resulting inventory levels of products under uncertain demand while considering the relevant costs such as ordering, holding, and backordering. In this context, multimodality of demand can be observed and becomes suitable to present the demand distribution under various circumstances \citep{Hanasusanto2015_MultimodalDRO}, which incorporates the cases when (i) a new product enters the market and it is challenging to predict its demand, (ii) a customer that constitutes the majority of the sales orders a product with irregular bulk amounts, and (iii) a new competitor enters the market and it is challenging to predict the reaction of the customers to this competition. In addition to these cases, demand to a product can fluctuate based on different trends in the market, which is shown in fashion industry, where the unimodality assumption is not justifiable and leading to unfavourable ordering policies \citep{Vaagen2008, Riley2018_FashionBimodal}. 
Similar concerns can arise in representing customer demand over the facility location problem in various service industries, which aims to determine
where to locate facilities to serve a given set of customers under uncertain demand \citep{Basciftci2023_BookChapter}. For instance, for determining where to locate charging stations of electric vehicles, different adoption rates of customers to the electric vehicles can pose challenges in predicting customer demand where a multimodal description of demand becomes more suitable \citep{Mak2013_EVChargingLocation, Shehadeh2021_BimodalDROFLP}. 
In all of these potential cases, unimodality assumption over customer demand
can oversimplify the representation of the underlying uncertainty and fails to capture the true problem setting by providing suboptimal decisions. 
Generalizing these observations demonstrates the importance of incorporating multimodal distributions arising in various problem contexts.

Moreover, in modelling multimodal uncertainties, a challenge for decision-makers is to take into account the dependency between decisions, mode probabilities, and the distribution of each mode, since decisions can have a direct impact on the likelihood of certain modes and the corresponding distributions. An example setting is the facility location problem, where opening of a facility location can increase the likelihood of having higher customer demand in the customer locations nearby, along with an increase in the customer demand itself. Thus, the facility opening decisions can impact both the mode probabilities and the characteristics of the underlying demand distribution, where this analogy should be leveraged into the optimization under uncertainty frameworks to efficiently formulate these problems with such decision-dependencies.  

Distributionally robust optimization {\color{black}(DRO)} is an optimization under uncertainty framework that has become recently popular as a striking balance between stochastic programming and robust optimization, where the former approach assumes a full distribution information of the underlying uncertainty, and the latter approach provides a more conservative framework by disregarding distribution information and considering an uncertainty set to represent the unknown parameters. On the contrary, DRO considers a partial information regarding the underlying distribution by optimizing the worst-case solution while considering potential distributions that come from an ambiguity set. The literature in this area can be divided into categories depending on how the ambiguity sets are defined, which are mainly referred as moment-based and distance-based ambiguity sets.  
Moment-based ambiguity sets consider distributions with given moment information within a specified region of their empirical counterparts \citep{Delage2010, zhang2018ambiguous}. On the other hand, distance-based ambiguity sets consider distributions that are within a specified distance from a reference distribution.
Some of the most studied distance measures in this area are $\phi$-divergence {\color{black}\citep{ben2013robust, Jiang2016, Liu2022_JOGO}} and Wasserstein distance \citep{Gao2016, Esfahani2018}, which can provide computational tractability in reformulations. 

The majority of the literature on DRO considers ambiguity sets that focus on unimodal distributions by neglecting potential different modes of distributions. As a pioneer study in this area, \cite{Hanasusanto2015_MultimodalDRO} focus on distributionally robust multi-item newsvendor problem under multimodal demand uncertainty by considering a moment-based ambiguity set and presenting a decision rule based approximation for its solution. 
By following this approach, \cite{Zhao2020_DROMultimodal} provide a DRO model to optimize the operations of energy hub systems under the ambiguity of multimodal forecast errors of photovoltaic power.   
\cite{Shehadeh2020_DROBimodal} study distributionally robust outpatient colonoscopy scheduling problem under bimodal colonoscopy duration uncertainty with known mean and support information to characterize its ambiguity set. 
Additionally, \cite{Chen2020_DRO} introduce a DRO model called robust
stochastic optimization by integrating features from stochastic programming through scenario trees to associate distributions with random events. The authors propose various ambiguity sets including an event-wise ambiguity set that encapsulates modelling mixture of distributions, including the one proposed in \cite{Hanasusanto2015_MultimodalDRO}. 
By extending this ambiguity set to the facility location problem, \cite{Shehadeh2021_BimodalDROFLP} study a distributionally robust variant of this problem under bimodal customer demand. 
As a different line of research, in machine learning, group DRO approach is proposed to optimize the worst-case expected loss of a prediction model over a mixture of given distributions to characterize the set of distributions corresponding to the test data for accounting potential multimodalities \cite{Sagawa2020_GroupDRO}.  
Although these studies provide potential approaches to integrate multimodality within DRO, they focus on specific problem settings 
and do not provide a generic framework while omitting the potential decision-dependent uncertainties between decisions and the distributions of the modes and each mode itself. 
Furthermore, majority of these studies do not consider the distributional ambiguity revolving around the mode probabilities. 

The notion of decisions impacting the distribution of the underlying uncertainties is a rising area in various optimization under uncertainty frameworks including stochastic programming \citep{hellemo2018decision} and robust optimization \citep{Nohadani2018} with applications in various contexts including healthcare \citep{Nohadani2017}, energy systems \citep{Basciftci2019} and production planning \citep{Feng2021_RobustDecisionDependent}. 
Within DRO frameworks, 
\cite{luo2020distributionally} introduce decision-dependency to various ambiguity sets including moment-based and distance-based variants by mainly focusing on two-stage programs. 
\cite{yu2022multistage} extend decision-dependency to multi-stage distributionally robust problems under moment-based ambiguity sets and provide tractable reformulations under certain cases with a stochastic dual dynamic integer programming based method for its solution.  
As an application of decision-dependency to the facility location problem, \cite{basciftci2021distributionally} considers the impact of facility location decisions on uncertain customer demand by building a piecewise linear relationship between these decisions and the first and second moment information of demand within a DRO framework, and obtain its tractable reformulation. 
Additionally, \cite{noyan2022distributionally} propose an ambiguity set for distributionally robust programs by considering balls centered on a decision-dependent probability distribution by focusing on a class of distances including total variation distance
and the Wasserstein metrics to characterize these ambiguity sets. As the resulting reformulations can be non-convex, the authors present special cases for obtaining tractable reformulations for certain applications such as machine scheduling problem. 
Despite these recent studies, multimodality has not been considered within DRO literature involving decision-dependent uncertainties. Additionally, decision-dependent mode probabilities and the resulting ambiguity around it are omitted. 

In this paper, we propose a generic optimization under uncertainty framework for two-stage distributionally robust programs by leveraging multimodal distributions under various forms of ambiguity sets, where the decisions of the first-stage problem can impact the distribution of the modes as well as each distribution corresponding to these modes. 
To this end, our contributions can be summarized as follows: 
\begin{itemize}
    \item We introduce a new class of ambiguity sets for DRO problems with multimodal uncertainties where the ambiguity sets consider decision-dependent uncertainties for capturing the mode distribution and the corresponding modes. To the best of our knowledge, this is the first study to integrate multimodality within DRO with decision-dependent uncertainties. 
    \item We propose a $\phi$-divergence set to represent the decision-dependent mode probabilities. We then identify two special $\phi$-divergence cases, variation distance and $\chi^2$-distance, to derive tractable reformulations. We further provide potential dependence relationships between decisions and mode probabilities to be leveraged into the subsequent optimization model.  
    \item We propose moment-based and Wasserstein distance-based ambiguity sets to characterize the distributions corresponding to each mode. We integrate these sets into the sets describing decision-dependent mode probabilities and provide various reformulations. We further derive computationally tractable reformulations under special cases, which can be directly solved by the off-the-shelf solvers. 
    \item We {\color{black}theoretically} derive the value of having a multimodal distributionally robust decision-dependent framework against its single-modal counterpart by analyzing {\color{black}the relationship between multimodal and single-modal} ambiguity sets and comparing their {\color{black}optimal objective values}. 
    \item {\color{black}We provide a separation-based decomposition algorithm that can solve the resulting optimization models, which can provide finite convergence and optimality guarantees under certain settings. We demonstrate the computational efficiency of the proposed algorithm against the off-the-shelf solver over large-scale instances.}
    \item We present {\color{black} an extensive computational study over two example problem settings, facility location problem and shipment planning problem with pricing, to illustrate our results}. We evaluate the performance of the proposed framework under both moment-based and distance-based ambiguity sets against various frameworks including single-modal and decision-independent approaches. 
    Our results demonstrate that omitting the consideration of multimodalities {\color{black}or} decision dependent uncertainties in DRO models can lead to deficiently performing solutions with poor qualities from various directions. 
\end{itemize}

The rest of the paper is organized as follows: Section \ref{sec:ProblemFormulation} presents the problem formulation and the ambiguity sets corresponding to the decision-dependent mode probabilities. Section \ref{sec:MultimodalReformulations} provides reformulations under moment-based and Wasserstein-based ambiguity sets in representing the distribution of each mode by combining them with the ambiguities around the mode distributions. Section \ref{sec:MultimodalAnalyticalResults} provides the value of having multimodal ambiguity sets by analytically comparing them against their single-modal counterparts. Section \ref{sec:mode} provides potential approaches to formulate the relationship between decisions and mode probabilities. 
{\color{black}Section \ref{sec:SolutionAlg} proposes a decomposition algorithm to solve the resulting problems more efficiently.} 
Section \ref{sec:ComputationalStudy} presents our computational study to demonstrate the impact of the proposed framework and corresponding reformulations. Section \ref{sec:Conclusion} concludes the paper with final remarks. 



\section{Problem Formulation}
\label{sec:ProblemFormulation}

In this section, we introduce the following generic two-stage multimodal decision-dependent distributionally robust optimization ($\rm{D^3RO}$) model 
\begin{align}
   (\mbox{\bf{Multi-Modal }} {\rm D^3RO}): \min_{\boldsymbol{y}\in\mathcal{Y}} 
   \boldsymbol{c}^{\mathsf T} \boldsymbol{y}+\max_{\mathbb{P} \in \Theta(\boldsymbol{y})}\mathbb{E}_{\boldsymbol{\xi}\sim \mathbb{P}}[h(\boldsymbol{y},\boldsymbol{\xi})], \label{model:DRO} 
\end{align}
where $\boldsymbol{y}$ is the first-stage decision variable from the non-empty and compact feasible region $\mathcal{Y}\subseteq\mathbb{R}^I$. The random vector $\boldsymbol{\xi} \in \mathbb{R}^N$ has a support set $\Xi$ and its distribution is associated with $\mathbb{P}$, which belongs to a multimodal decision-dependent ambiguity set $\Theta(\boldsymbol{y})$, depending on the first-stage decisions $\boldsymbol{y}$.  
The costs of the first-stage and second-stage problems are represented by $\boldsymbol{c}^{\mathsf T} \boldsymbol{y}$ and $h(\boldsymbol{y},\boldsymbol{\xi})$, respectively. 
In terms of decision-dependent uncertainties, in addition to the ambiguity set $\Theta(\boldsymbol{y})$, we may also allow the uncertainty realization $\boldsymbol{\xi}$ to depend on decision $\boldsymbol{y}$, i.e., by replacing $\boldsymbol{\xi}$ with $\boldsymbol{\xi}(\boldsymbol{y})$, which is the case considered in Section \ref{sec:distance}; however, for notation simplicity, we will suppress the dependence of $\boldsymbol{\xi}$ on $\boldsymbol{y}$ over the generic problem setting. 
We note that we do not make any assumptions on the feasible region $\mathcal{Y}$ to derive our reformulations and corresponding results. However, in certain cases, we will assume $\boldsymbol{y}$ to be binary valued for computational tractability.



Following the notation in \cite{shapiro2009lectures,hanasusanto2018conic,xie2020tractable}, we consider the second-stage problem as follows
\begin{subequations}\label{eq:second-stage}
\begin{align}
h(\boldsymbol{y},\boldsymbol{\xi})=\min_{\boldsymbol{x}\in\mathbb{R}^J}\quad& (\boldsymbol{Q}\boldsymbol{\xi}+\boldsymbol{q})^{\mathsf T}\boldsymbol{x}\\
    \text{s.t.}\quad & \boldsymbol{T}(\boldsymbol{y})\boldsymbol{\xi}+W\boldsymbol{x}\ge \boldsymbol{R}(\boldsymbol{y}),
\end{align}
\end{subequations}
where $\boldsymbol{x}\in\mathbb{R}^J$ denotes the wait-and-see decisions in the second-stage problem. 
Here $\boldsymbol{T}(\boldsymbol{y})\in\mathbb{R}^{S\times N}$ and $\boldsymbol{R}(\boldsymbol{y})\in{\mathbb{R}^S}$ are matrix- and vector-valued affine functions. We make the following assumption throughout the paper.
\begin{assumption}[Relatively complete recourse]\label{assump:complete-recourse}
The second-stage problem $h(\boldsymbol{y},\boldsymbol{\xi})$ is feasible under every feasible first-stage decision $\boldsymbol{y} \in \mathcal{Y}$ and every realization of $\boldsymbol{\xi} \in \Xi$. 
\end{assumption}
Assumption \ref{assump:complete-recourse} is used to ensure that the feasible set for the second-stage problem is always non-empty and is satisfied by many operations research problems. 
{\color{black} We further assume $h(\boldsymbol{y},\boldsymbol{\xi})$ to be finite under every $\boldsymbol{y} \in \mathcal{Y}$ and $\boldsymbol{\xi} \in \Xi$ to ensure strong duality between \eqref{eq:second-stage} and its dual problem.} 

The ambiguity set $\Theta(\boldsymbol{y})$ in Model \eqref{model:DRO} contains two layers of decision-dependent robustness, where the first layer of ambiguity is due to the multimodality of the underlying distribution with uncertain mode probabilities corresponding to each mode $l \in \{1,\cdots,L\}$, and the second layer of ambiguity is for representing the distribution of each mode, denoted by $\mathbb{P}_l$. Furthermore, both layers of uncertainty can incorporate decision dependencies with the first-stage decision $\boldsymbol{y}\in\mathcal{Y}$. Thus, under a given first-stage decision $\boldsymbol{y}\in\mathcal{Y}$, $\Theta(\boldsymbol{y})$ is defined as
\begin{equation}
    \Theta(\boldsymbol{y})=\left\{\sum_{l=1}^L p_l\mathbb{P}_l:\ \boldsymbol{p}\in\Delta(\hat{p}(\boldsymbol{y})),\ \mathbb{P}_l\in\mathcal{U}_l(\boldsymbol{y}), \> l = 1, \cdots, L\right\}.\label{eq:genericMultiModalAmbiguity}
\end{equation}

Here, any element in $\Theta(\boldsymbol{y})$ can be represented as a mixture of $L$ probability distributions $\mathbb{P}_l$ with mode probability $p_l$ for all $l=1,\ldots, L$. Set $\Delta(\hat{p}(\boldsymbol{y}))$ includes all candidate mode probabilities based on a reference mode probability $\hat{p}(\boldsymbol{y}) \in \mathbb{R}^{L}_{+}$ with $\sum_{l=1}^L\hat{p}_l(\boldsymbol{y}) = 1$ for every $\boldsymbol{y} \in \mathcal{Y}$, and set $\mathcal{U}_l(\boldsymbol{y})$ contains all candidate probability distributions in each mode $l \in \{1,\cdots,L\}$ {\color{black}with a corresponding support set $\Xi_l$, where $\Xi = \cup_{l=1}^L \Xi_l$}. {\color{black}In practice, the number of modes $L$ represents the decision maker's knowledge of the uncertainty and can be chosen via cross-validation based on some historical data by minimizing the mean squared error.}

In this section, we focus on the first layer of ambiguity in representing mode probabilities and present potential sets to describe $\Delta(\hat{p}(\boldsymbol{y}))$. Then, we consider decision-dependent moment-based and distance-based ambiguity sets for describing $\mathcal{U}_l(\boldsymbol{y})$ and provide reformulations for a class of DRO problems with multimodal decision-dependent ambiguity sets in Section \ref{sec:moment} and in Section \ref{sec:distance}, respectively. 

To introduce $\Delta(\hat{p}(\boldsymbol{y}))$, we consider mode probabilities within a certain distance to the reference probability $\hat{p}(\boldsymbol{y})$ that can be represented through $\phi$-divergence \citep{liese2006divergences,ben2013robust} as follows:
\begin{equation} \label{eq:modeDist-PhiDivergence}
(\text{$\phi$-Divergence}) \quad \Delta(\hat{p}(\boldsymbol{y})):=\left\{\boldsymbol{p} \in \mathbb{R}^{L}_{+}: \ \sum_{l=1}^Lp_l=1, \ I_{\phi} (p, \hat{p}(y)) = \sum_{l=1}^L \hat{p}_l(y) \> \phi\left(\frac{p_l}{\hat{p}_l(y)}\right) \le \rho\right\},
\end{equation}
where $I_{\phi} (\cdot)$ represents the $\phi$-divergence between two probability distributions and $\rho$ corresponds to the robustness level. Here $\phi(t)$ is convex for $t \geq 0$, $\phi(1) = 0$, $0 \phi (a/0) = a \lim_{t \rightarrow \infty} \phi(t)/t$ for $a > 0$, and $0 \phi (0/0) = 0$. We note that when $\rho=0$, set \eqref{eq:modeDist-PhiDivergence} will reduce to a singleton $\Delta(\hat{p}(\boldsymbol{y}))=\{\hat{p}(\boldsymbol{y})\}$, which is suitable when mode probabilities can be accurately estimated as the reference distribution $\hat{p}(\boldsymbol{y})$. {\color{black}In practice, $\rho$ represents the decision-maker's confidence in the mode probability estimates and can be chosen via cross-validation.} The class of $\phi$-divergence includes many popular distances as special cases, including Kullback–Leibler divergence, Hellinger distance, $\chi^2$-distance, Cressie-Read distance, and many others. 

Next, we discuss two special cases with different choices of $\phi$-divergence functions. 
We first consider a special case of set \eqref{eq:modeDist-PhiDivergence} by choosing $\phi(t)=|t-1|$. This leads to the variation distance set defined by the $L_1$-norm with respect to the reference distribution as follows:
\begin{equation} \label{eq:modeDist-VariationDistance}
(\text{Variation Distance}) \quad \Delta(\hat{p}(\boldsymbol{y})):=\left\{\boldsymbol{p} \in \mathbb{R}^{L}_{+}: \ \sum_{l=1}^Lp_l=1, \ \sum_{l=1}^L| p_l - \hat{p}_l(\boldsymbol{y})| \leq \rho\right\}.
\end{equation}
Since this variation distance based set can be represented through a non-empty polyhedral set, our results in the subsequent sections under this setting can be applied to other $\Delta(\hat{p}(\boldsymbol{y}))$ sets with polyhedral representation. 

 As our second setting, we consider a special case of set \eqref{eq:modeDist-PhiDivergence} by leveraging a $\chi^2$-distance with respect to the reference distribution $\hat{p}(\boldsymbol{y})$ with its corresponding $\phi(t) = \frac{1}{t} (t-1)^2$. The resulting ambiguity set can be represented as follows:
\begin{equation} \label{eq:modeDist-ChiSquare}
(\text{$\chi^2$-Distance}) \quad \Delta(\hat{p}(\boldsymbol{y})):=\left\{\boldsymbol{p} \in \mathbb{R}^{L}_{+}: \ \sum_{l=1}^Lp_l=1, \ \sum_{l=1}^L\frac{(p_l-\hat{p}_l(\boldsymbol{y}))^2}{p_l}\le \rho\right\}.
\end{equation}
Note that $\chi^2$-distance is used in \cite{Hanasusanto2015_MultimodalDRO} to model uncertainty of mode probability in a newsvendor problem under a decision-independent setting with a moment-based ambiguity set. On the other hand, our work considers a generic two-stage DRO model with a broader class of distances (by using $\phi$-divergence) and incorporates decision-dependency in both layers of the ambiguity sets while considering both moment-based and distance-based ambiguity sets.

We first provide a dual reformulation of the two-stage multimodal $\rm{D^3RO}$ model \eqref{model:DRO} using the general $\phi$-divergence set defined in \eqref{eq:modeDist-PhiDivergence} in the following theorem. {\color{black} All the proofs in this section are presented in Appendix \ref{sec:omittedProofs}.}

\begin{theorem}[$\phi$-Divergence]\label{thm:DRO-PhiDivergence}
    Using the $\phi$-divergence set defined in \eqref{eq:modeDist-PhiDivergence}, the two-stage multimodal $\rm{D^3RO}$ model \eqref{model:DRO} can be reformulated as
    \begin{subequations}\label{model:DRO-PhiDivergence}
    \begin{align}
        \min_{\boldsymbol{y},\lambda,\eta,\boldsymbol{\psi}} \quad &\boldsymbol{c}^{\mathsf T}\boldsymbol{y}+\eta+\rho\lambda+\lambda\sum_{l=1}^L\hat{p}_l(\boldsymbol{y})\phi^*(\frac{\psi_l-\eta}{\lambda})\\
        \text{s.t.}\quad & \boldsymbol{y}\in\mathcal{Y},\ \lambda\ge 0\\
        & \psi_l=\max_{\mathbb{P}_l \in \mathcal{U}_l(\boldsymbol{y})}\mathbb{E}_{\mathbb{P}_l}[h(\boldsymbol{y},\boldsymbol{\xi})],\ \forall l=1,\ldots, L\label{eq:WorstCaseExpectation}
    \end{align}
    \end{subequations}
    where $\phi^*$ is the conjugate of $\phi$, i.e., $\phi^*(s)=\sup_{t\ge 0}\{st-\phi(t)\}$.
\end{theorem}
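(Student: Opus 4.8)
The plan is to dualize the inner maximization over the mode probabilities $\boldsymbol{p}$ in the $\phi$-divergence set \eqref{eq:modeDist-PhiDivergence}, treating the worst-case expectations over each $\mathcal{U}_l(\boldsymbol{y})$ as fixed scalars. First I would fix $\boldsymbol{y}\in\mathcal{Y}$ and observe that, by linearity of expectation over the mixture $\sum_{l=1}^L p_l\mathbb{P}_l$, the inner problem $\max_{\mathbb{P}\in\Theta(\boldsymbol{y})}\mathbb{E}_{\boldsymbol{\xi}\sim\mathbb{P}}[h(\boldsymbol{y},\boldsymbol{\xi})]$ separates as
\begin{equation*}
\max_{\boldsymbol{p}\in\Delta(\hat p(\boldsymbol{y}))}\ \sum_{l=1}^L p_l\Bigl(\max_{\mathbb{P}_l\in\mathcal{U}_l(\boldsymbol{y})}\mathbb{E}_{\mathbb{P}_l}[h(\boldsymbol{y},\boldsymbol{\xi})]\Bigr)=\max_{\boldsymbol{p}\in\Delta(\hat p(\boldsymbol{y}))}\ \sum_{l=1}^L p_l\psi_l,
\end{equation*}
where $\psi_l$ is defined as in \eqref{eq:WorstCaseExpectation}; the interchange of the two maximizations is valid because $p_l\ge 0$ and the two sets of variables are independent once $\boldsymbol{y}$ is fixed. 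This reduces the outer layer to a finite-dimensional linear program in $\boldsymbol{p}$ over the convex set $\Delta(\hat p(\boldsymbol{y}))$.

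Next I would take the Lagrangian dual of $\max\{\sum_l p_l\psi_l : \sum_l p_l=1,\ I_\phi(p,\hat p(\boldsymbol{y}))\le\rho,\ \boldsymbol{p}\ge 0\}$, introducing a multiplier $\lambda\ge 0$ for the divergence constraint and $\eta\in\mathbb{R}$ for the normalization constraint. This is the standard $\phi$-divergence duality computation (as in \citealp{ben2013robust}): the Lagrangian decouples over the coordinates $p_l$, and the supremum over each $p_l\ge 0$ of $p_l\psi_l-\eta p_l-\lambda\hat p_l(\boldsymbol{y})\phi(p_l/\hat p_l(\boldsymbol{y}))$ is computed by the substitution $t=p_l/\hat p_l(\boldsymbol{y})$, yielding $\lambda\hat p_l(\boldsymbol{y})\sup_{t\ge 0}\{t(\psi_l-\eta)/\lambda-\phi(t)\}=\lambda\hat p_l(\boldsymbol{y})\phi^*\bigl((\psi_l-\eta)/\lambda\bigr)$ when $\lambda>0$, with the usual perspective-function convention handling $\lambda=0$. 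Adding back the $\rho\lambda+\eta$ terms gives the dual objective $\eta+\rho\lambda+\lambda\sum_{l=1}^L\hat p_l(\boldsymbol{y})\phi^*\bigl((\psi_l-\eta)/\lambda\bigr)$. Strong duality holds because $\Delta(\hat p(\boldsymbol{y}))$ is a bounded convex set containing $\hat p(\boldsymbol{y})$, at which $I_\phi=0<\rho$ (Slater's condition, assuming $\rho>0$; the $\rho=0$ case is the trivial singleton and can be checked directly), and $\psi_l$ is finite by Assumption \ref{assump:complete-recourse} together with the finiteness of $h$. Combining this with the outer $\min_{\boldsymbol{y}}\boldsymbol{c}^{\mathsf T}\boldsymbol{y}+(\cdot)$ and keeping $\psi_l$ as auxiliary variables constrained by \eqref{eq:WorstCaseExpectation} produces \eqref{model:DRO-PhiDivergence}.

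The main obstacle I anticipate is the careful handling of the degenerate cases in the conjugate computation — namely $\lambda=0$ (where the term $\lambda\phi^*((\psi_l-\eta)/\lambda)$ must be interpreted as the recession function $\lim_{t\to\infty}\phi(t)/t$ evaluated appropriately, consistent with the conventions $0\phi(a/0)=a\lim_{t\to\infty}\phi(t)/t$ stated after \eqref{eq:modeDist-PhiDivergence}) and the boundary points where $\hat p_l(\boldsymbol{y})=0$. One must verify that the perspective function $(\lambda,u)\mapsto\lambda\phi^*(u/\lambda)$ is convex and lower semicontinuous so that the reformulation \eqref{model:DRO-PhiDivergence} is a well-defined (and, for convex $\hat p_l(\boldsymbol{y})$ and the specific $\phi$ choices later, tractable) minimization problem. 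Apart from that, the argument is a routine application of Lagrangian duality for $\phi$-divergence-constrained linear programs, and the nesting with $\boldsymbol{y}$ is immediate since none of the dual manipulations depend on the structure of $\mathcal{Y}$.
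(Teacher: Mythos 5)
Your proposal is correct and follows essentially the same route as the paper's proof: reduce the inner problem to a linear program over $\boldsymbol{p}$ with the $\psi_l$ fixed, dualize the normalization and divergence constraints with multipliers $\eta$ and $\lambda\ge 0$, recover $\phi^*$ via the substitution $t=p_l/\hat p_l(\boldsymbol{y})$, and invoke Slater's condition at $\boldsymbol{p}=\hat p(\boldsymbol{y})$ for strong duality. Your explicit treatment of the mixture-separation step and of the degenerate cases ($\lambda=0$, $\hat p_l(\boldsymbol{y})=0$, $\rho=0$) is a bit more careful than the paper's, which implicitly assumes $\rho>0$, but the substance is the same.
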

We refer interested readers to \cite{ben2013robust} for a list of different $\phi$-divergence functions and their conjugates. In this paper, we focus on variation distance and $\chi^2$-distance as two special cases and provide their reformulations in Theorems \ref{thm:DRO-VariationDistance} and \ref{thm:DRO-ChiDistance}, respectively.


\begin{theorem}[Variation Distance]\label{thm:DRO-VariationDistance}
    When $\phi(t)=|t-1|$, the $\phi$-divergence set becomes the variation distance set \eqref{eq:modeDist-VariationDistance}, 
    and the two-stage multimodal $\rm{D^3RO}$ model \eqref{model:DRO} can be reformulated as
    \begin{subequations}\label{model:variation-distance}
    \begin{align}
        \min_{\boldsymbol{y},\lambda,\eta,\boldsymbol{\psi}} \quad &\boldsymbol{c}^{\mathsf T}\boldsymbol{y}+\eta+\rho\lambda+\sum_{l=1}^L\hat{p}_l(\boldsymbol{y})r_l\\
        \text{s.t.}\quad & \boldsymbol{y}\in\mathcal{Y},\ \lambda\ge 0\\ &\text{\eqref{eq:WorstCaseExpectation}}\nonumber\\
        & r_l\ge \psi_l-\eta,\ \forall l=1,\ldots, L\\
        & r_l\ge -\lambda,\ \forall l=1,\ldots, L\\
        & \psi_l-\eta\le \lambda,\ \forall l=1,\ldots, L
    \end{align}
    \end{subequations}
\end{theorem}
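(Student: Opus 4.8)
The plan is to obtain \eqref{model:variation-distance} as a direct specialization of the general $\phi$-divergence reformulation of Theorem~\ref{thm:DRO-PhiDivergence}: compute the conjugate $\phi^*$ of $\phi(t)=|t-1|$, substitute it into \eqref{model:DRO-PhiDivergence}, and rewrite the resulting piecewise-linear objective with epigraph variables.

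First I would compute $\phi^*(s)=\sup_{t\ge 0}\{st-|t-1|\}$ by splitting the supremum at $t=1$: on $t\in[0,1]$ the inner expression equals the affine function $(s+1)t-1$ and on $t\ge 1$ it equals $(s-1)t+1$. Comparing slopes, the supremum is attained at $t=1$ with value $s$ whenever $s\in[-1,1]$, equals $-1$ (attained at $t=0$) when $s\le -1$, and is $+\infty$ when $s>1$. Hence $\operatorname{dom}\phi^*=(-\infty,1]$ and $\phi^*(s)=\max\{s,-1\}$ on its domain.

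Substituting this into \eqref{model:DRO-PhiDivergence}: for $\lambda>0$ the term $\lambda\,\hat p_l(\boldsymbol{y})\,\phi^*\!\big(\tfrac{\psi_l-\eta}{\lambda}\big)$ is finite only when $\tfrac{\psi_l-\eta}{\lambda}\le 1$, i.e.\ $\psi_l-\eta\le\lambda$, which yields the last family of constraints in \eqref{model:variation-distance}; and on that domain $\lambda\max\{\tfrac{\psi_l-\eta}{\lambda},-1\}=\max\{\psi_l-\eta,-\lambda\}$. Since $\hat p_l(\boldsymbol{y})\ge 0$ and the outer problem is a minimization, the nonlinear term $\sum_{l}\hat p_l(\boldsymbol{y})\max\{\psi_l-\eta,-\lambda\}$ can be replaced by $\sum_l\hat p_l(\boldsymbol{y})\,r_l$ together with the epigraph constraints $r_l\ge\psi_l-\eta$ and $r_l\ge-\lambda$ without changing the optimal value; this is exactly \eqref{model:variation-distance}, with the block \eqref{eq:WorstCaseExpectation} carried over verbatim.

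The one step requiring care --- and the only real obstacle --- is the boundary case $\lambda=0$, where the perspective term in \eqref{model:DRO-PhiDivergence} must be read via the usual closure convention as $\delta^*\!\big(\psi_l-\eta\mid\operatorname{dom}\phi\big)$, which equals $0$ if $\psi_l\le\eta$ and $+\infty$ otherwise. I would verify this is consistent with \eqref{model:variation-distance} at $\lambda=0$: there the constraints collapse to $\psi_l-\eta\le 0$, $r_l\ge\psi_l-\eta$, $r_l\ge 0$, so $r_l=0$ is optimal and both formulations return $\boldsymbol{c}^{\mathsf T}\boldsymbol{y}+\eta$ under the same feasibility requirement $\psi_l\le\eta$ for all $l$. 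As an alternative, one could bypass Theorem~\ref{thm:DRO-PhiDivergence} and instead dualize the inner linear program $\max\{\sum_l p_l\psi_l:\ \boldsymbol{p}\ge \boldsymbol{0},\ \sum_l p_l=1,\ \sum_l|p_l-\hat p_l(\boldsymbol{y})|\le\rho\}$ (after splitting each $p_l-\hat p_l(\boldsymbol{y})$ into positive and negative parts); strong duality holds because the feasible region is a nonempty compact polytope, and the dual variables attached to the normalization, the $L_1$-budget, and the sign constraints reproduce $\eta$, $\lambda$, and the $r_l$, respectively.
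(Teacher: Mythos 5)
Your proposal is correct and follows essentially the same route as the paper: compute $\phi^*(s)=\max\{s,-1\}$ on $(-\infty,1]$ ($+\infty$ otherwise), substitute into Theorem~\ref{thm:DRO-PhiDivergence}, and replace $\lambda\phi^*\bigl(\tfrac{\psi_l-\eta}{\lambda}\bigr)=\max\{\psi_l-\eta,-\lambda\}$ (finite only when $\psi_l-\eta\le\lambda$) by the epigraph variable $r_l$ with $r_l\ge\psi_l-\eta$, $r_l\ge-\lambda$, valid since $\hat p_l(\boldsymbol{y})\ge 0$. Your extra attention to the $\lambda=0$ boundary case and the alternative direct LP-duality argument go slightly beyond the paper's write-up but do not change the substance of the argument.
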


\begin{remark}\label{remark:singleton}
    When $\rho=0$, we assume that we have a precise reference mode probability, i.e., $\Delta(\hat{p}(\boldsymbol{y}))=\{\hat{p}(\boldsymbol{y})\}$. In this case, $\lambda$ can be set to sufficiently large without penalty and $r_l=\psi_l-\eta$ at optimality. As a result,
    Model \eqref{model:variation-distance} reduces to $\min_{\boldsymbol{y}\in\mathcal{Y}}\{\boldsymbol{c}^{\mathsf T}\boldsymbol{y}+\sum_{l=1}^L\hat{p}_l(\boldsymbol{y})\psi_l:\text{\eqref{eq:WorstCaseExpectation}}\}$.
\end{remark}
\begin{theorem}[$\chi^2$-Distance] \label{thm:DRO-ChiDistance}
    When $\phi(t)=\frac{1}{t}(t-1)^2$, the $\phi$-divergence set becomes the $\chi^2$-distance set \eqref{eq:modeDist-ChiSquare}, and the two-stage multimodal $\rm{D^3RO}$ model \eqref{model:DRO} can be reformulated as
    \begin{subequations}\label{model:ChiDistance}
    \begin{align}
        \min_{\boldsymbol{y},\lambda,\eta,\boldsymbol{\psi}} \quad &\boldsymbol{c}^{\mathsf T}\boldsymbol{y}+\eta+\rho\lambda+2\lambda-2\sum_{l=1}^L\hat{p}_l(\boldsymbol{y})r_l\\
        \text{s.t.}\quad & \boldsymbol{y}\in\mathcal{Y},\ \lambda\ge 0\\
        &\text{\eqref{eq:WorstCaseExpectation}}\nonumber\\
        & \sqrt{r_l^2+\frac{1}{4}(\psi_l-\eta)^2}\le \lambda-\frac{1}{2}(\psi_l-\eta),\ \forall l=1,\ldots, L\\
        & \psi_l-\eta\le \lambda,\ \forall l=1,\ldots, L
    \end{align}
    \end{subequations}
\end{theorem}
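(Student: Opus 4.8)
\textbf{Proof proposal for Theorem \ref{thm:DRO-ChiDistance} ($\chi^2$-Distance).}

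The plan is to specialize Theorem \ref{thm:DRO-PhiDivergence} to the $\chi^2$-divergence function $\phi(t) = \frac{1}{t}(t-1)^2$ and then massage the resulting conjugate-penalty term into a second-order cone representation. First I would compute the conjugate $\phi^*(s) = \sup_{t \ge 0}\{st - \frac{1}{t}(t-1)^2\}$. Writing $\frac{1}{t}(t-1)^2 = t - 2 + \frac{1}{t}$, the inner objective becomes $(s-1)t + 2 - \frac{1}{t}$; setting its derivative $(s-1) + \frac{1}{t^2}$ to zero gives $t^* = \frac{1}{\sqrt{1-s}}$ for $s < 1$, and substituting back yields $\phi^*(s) = 2 - 2\sqrt{1-s}$ for $s \le 1$, with $\phi^*(s) = +\infty$ for $s > 1$ (the supremum is unbounded as $t \to \infty$ when $s > 1$). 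This finiteness requirement $s \le 1$ applied to $s = \frac{\psi_l - \eta}{\lambda}$ produces the constraint $\psi_l - \eta \le \lambda$, which is exactly the last constraint block in Model \eqref{model:ChiDistance}.

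Next I would substitute this conjugate into the objective of Model \eqref{model:DRO-PhiDivergence}. The penalty term $\rho\lambda + \lambda \sum_{l=1}^L \hat{p}_l(\boldsymbol{y}) \phi^*\!\left(\frac{\psi_l - \eta}{\lambda}\right)$ becomes $\rho\lambda + \sum_{l=1}^L \hat{p}_l(\boldsymbol{y})\bigl(2\lambda - 2\lambda\sqrt{1 - \frac{\psi_l-\eta}{\lambda}}\bigr) = \rho\lambda + 2\lambda - 2\sum_{l=1}^L \hat{p}_l(\boldsymbol{y})\sqrt{\lambda^2 - \lambda(\psi_l - \eta)}$, using $\sum_l \hat{p}_l(\boldsymbol{y}) = 1$ and $\lambda\sqrt{1 - (\psi_l-\eta)/\lambda} = \sqrt{\lambda^2 - \lambda(\psi_l-\eta)}$ for $\lambda \ge 0$. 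I would then introduce auxiliary variables $r_l$ with $r_l \le \sqrt{\lambda^2 - \lambda(\psi_l - \eta)}$; since these $r_l$ enter the objective (which is being minimized) with the negative coefficient $-2\hat{p}_l(\boldsymbol{y}) \le 0$, this inequality is tight at optimality, so the reformulation is exact. Finally I would rewrite $r_l \le \sqrt{\lambda^2 - \lambda(\psi_l-\eta)}$, equivalently $r_l^2 + \lambda(\psi_l - \eta) \le \lambda^2$ (for $r_l \ge 0$), as the rotated/standard second-order cone constraint $\sqrt{r_l^2 + \tfrac14(\psi_l - \eta)^2} \le \lambda - \tfrac12(\psi_l - \eta)$ by completing the square: $\lambda^2 - \lambda(\psi_l-\eta) = \bigl(\lambda - \tfrac12(\psi_l-\eta)\bigr)^2 - \tfrac14(\psi_l-\eta)^2$.

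The main obstacle I anticipate is handling the boundary and degenerate cases carefully rather than any deep difficulty: specifically, justifying the division by $\lambda$ when $\lambda = 0$ (using the convention $0\phi^*(s/0) = s\lim_{t\to\infty}\phi^*(t)/t$ inherited from the perspective function, or arguing $\lambda > 0$ is without loss when $\rho > 0$ via the domain restriction, and handling $\rho = 0$ separately as in Remark \ref{remark:singleton}), and confirming that the SOC rewriting is equivalent to $r_l^2 \le \lambda^2 - \lambda(\psi_l - \eta)$ including the implicit sign conditions $r_l \ge 0$ and $\lambda - \tfrac12(\psi_l-\eta) \ge 0$ (the latter follows from $\psi_l - \eta \le \lambda$ together with $\lambda \ge 0$). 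I would also note that the worst-case expectation constraints \eqref{eq:WorstCaseExpectation} are carried over verbatim from Theorem \ref{thm:DRO-PhiDivergence} and are not affected by the choice of $\phi$, so no additional work is needed there.
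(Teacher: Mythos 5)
Your proposal is correct and follows essentially the same route as the paper's proof: compute $\phi^*(s)=2-2\sqrt{1-s}$ for $s\le 1$ (yielding $\psi_l-\eta\le\lambda$), substitute into Theorem \ref{thm:DRO-PhiDivergence}, and recognize $\lambda\sqrt{1-(\psi_l-\eta)/\lambda}$ as the quantity represented by $r_l$ through the second-order cone identity $\lambda^2-\lambda(\psi_l-\eta)=\bigl(\lambda-\tfrac12(\psi_l-\eta)\bigr)^2-\tfrac14(\psi_l-\eta)^2$. Your version is merely more explicit than the paper's about the conjugate computation, the tightness-at-optimality argument for replacing the equality defining $r_l$ by the SOC inequality, and the boundary cases $\lambda=0$ and $\rho=0$, all of which the paper leaves implicit.
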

In these derivations, the reference mode distribution $\hat{p}_l(\boldsymbol{y})$ is presented in its closed form. 
However, when an affine function in terms of the first-stage decision variables $\boldsymbol{y}$ is considered for its representation and $\boldsymbol{y}$ is binary valued, then 
Model \eqref{model:variation-distance} can be represented as a mixed-integer linear program (MILP) given that we provide linear constraints to reformulate Constraints \eqref{eq:WorstCaseExpectation} and leverage McCormick envelopes \citep{mccormick1976computability} to linearize bilinear terms. Similarly, under these assumptions, Model \eqref{model:ChiDistance} can be represented as a mixed-integer second-order conic program (MISOCP). 
Thus, from a computational perspective, it is more expensive to solve a multimodal $\rm{D^3RO}$ with $\chi^2$-distance than the variation distance. 
We note that we provide potential functional forms of the reference mode distribution $\hat{p}_l(\boldsymbol{y})$ in Section \ref{sec:mode}, which can be leveraged into different application settings and lead to computationally tractable reformulations under certain cases. 

\begin{remark}\label{remark:DD-SP}
Before we introduce different representations of ambiguity set $\mathcal{U}_l(\boldsymbol{y})$, let us first consider a special case when $\mathcal{U}_l(\boldsymbol{y})=\{\mathbb{P}_l\}$ is a singleton for each $l=1,\ldots, L$. This setting is suitable when we know the underlying distribution for each mode. In this case, Constraints \eqref{eq:WorstCaseExpectation} become $\psi_l=\mathbb{E}_{\mathbb{P}_l}[h(\boldsymbol{y},\boldsymbol{\xi})],\ \forall l=1,\ldots, L$, and this setting leads to a multimodal decision-dependent stochastic program (MM-DD-SP). Later in Section \ref{sec:distance}, we will discuss decision-dependent sample average approximation (SAA) models when we have access to decision-dependent data samples to estimate the true distribution $\mathbb{P}_l$.
\end{remark}

{\color{black}
\begin{remark}\label{remark:EventwiseAmbiguitySet}
We note that the proposed ambiguity set \eqref{eq:genericMultiModalAmbiguity} encapsulates the generic event-wise ambiguity set presented in \cite{Chen2020_DRO}, while extending this set by incorporating decision-dependencies and explicitly investigating reformulations under various forms of this ambiguity set.
Specifically, the event-wise ambiguity sets combine the features of stochastic programming and robust optimization by considering a set of discrete random variables 
that are associated with events and a set of continuous random variables whose ambiguous conditional distributions are based on the realizations of these events. 
Consequently, by associating modes with stochastic events, under a given first-stage decision $\boldsymbol{y}\in\mathcal{Y}$, $\Theta(\boldsymbol{y})$ can be recast as 
\begin{align}
    \Theta(\boldsymbol{y}) = \left\{ \mathbb{P}(\boldsymbol{y}) \middle\vert \begin{array}{l}
    (\tilde{\boldsymbol{\xi}}, \tilde{p}) \sim  \mathbb{P}(\boldsymbol{y})\\
    (\tilde{\boldsymbol{\xi}} | \tilde{p} = l)
    \sim \mathbb{P}_l \in \mathcal{U}_l(\boldsymbol{y}) \quad \forall l = 1, \cdots, L \\
    \mathbb{P}(\tilde{p} = l) = p_l \quad \forall l = 1, \cdots, L\\
        \boldsymbol{p} \in \Delta(\hat{p}(\boldsymbol{y})) 
  \end{array}\right\}.
\end{align}
We note that any distribution $\mathbb{P}(\boldsymbol{y}) \in \Theta(\boldsymbol{y})$ can be represented as $\sum_{l=1}^L p_l\mathbb{P}_l$, where the distribution of $\boldsymbol{p}$ belongs to the ambiguity set $\Delta(\hat{p}(\boldsymbol{y}))$, and the corresponding distribution of each mode $l = 1, \cdots, L$, represented by $\mathbb{P}_l$, belongs to the ambiguity set $\mathcal{U}_l(\boldsymbol{y})$. 
When $\Delta(\hat{p}(\boldsymbol{y}))$
and $\mathcal{U}_l(\boldsymbol{y})$ do not depend on the first-stage decision $\boldsymbol{y}$, then this special case can be associated with the 
generic event-wise ambiguity set presented in \cite{Chen2020_DRO}, which can capture certain moment-based and distance-based ambiguity sets. To this end, Theorem~\ref{thm:DRO-PhiDivergence} provides a reformulation under any form of ambiguity set $\mathcal{U}_l(\boldsymbol{y})$, when $\hat{p}(\boldsymbol{y})$ can be represented through $\phi$-divergence within set $\Delta(\hat{p}(\boldsymbol{y}))$, demonstrating generalizability of our framework. In the subsequent sections, we derive tractable reformulations by exploiting special forms of the sets $\mathcal{U}_l(\boldsymbol{y})$ and $\Delta(\hat{p}(\boldsymbol{y}))$. 
\end{remark}
}



\section{Tractability of Multi-Modal $\rm{D^3RO}$}
\label{sec:MultimodalReformulations}

In this section, we discuss the tractability of the two-stage multimodal $\rm{D^3RO}$ model \eqref{model:DRO}. Specifically, we focus on providing tractable reformulations {\color{black}for our generic reformulation presented in Theorem \ref{thm:DRO-PhiDivergence} by analyzing} Constraint \eqref{eq:WorstCaseExpectation} under moment-based ambiguity sets $\mathcal{U}_l(\boldsymbol{y})$ in Section \ref{sec:moment} and under distance-based ambiguity sets $\mathcal{U}_l(\boldsymbol{y})$ in Section \ref{sec:distance}, respectively.

\subsection{Moment-based Ambiguity Sets}\label{sec:moment}
When the distribution of the uncertain parameter is unknown, it may be possible to estimate some moment functions for the uncertain parameters from historical data. These nominal moment functions can be used to construct moment-based ambiguity sets to find robust decisions against distributional ambiguity.
We make the following assumption throughout Section \ref{sec:moment}.
\begin{assumption}\label{assumption:finite}
{\color{black}Every probability distribution  $\mathbb{P}\in\mathcal{U}_l(\boldsymbol{y})$ for each mode $l = 1, \cdots, L$ has a decision-independent and compact support set $\Xi_l$ for all solution values $\boldsymbol{y}\in \mathcal{Y}$.} 
\end{assumption}
{\color{black}
Note that for notation simplicity, the support set of the distributions in different modes are assumed to be decision-independent in Assumption \ref{assumption:finite}. However, all the reformulations derived in this section can be easily extended to settings with decision-dependent support sets. 
In practice, these support sets can be selected from the range of uncertain parameters, which can be represented through polyhedral formulations. For example, it is often possible to identify some lower and upper bounds on $\boldsymbol{\xi}$ such that $\boldsymbol{\xi}\in[\boldsymbol{l},\boldsymbol{u}]$. Alternatively, if a discrete support set is considered with $K$ elements, we can either sample $\boldsymbol{\xi}^k$ from this range or let $\boldsymbol{\xi}^k$ be equally spaced in this range from $k = 1, \cdots, K$. In Section \ref{sec:distance}, we will discuss ways to construct decision-dependent support points for distance-based ambiguity sets.}

We define moment-based ambiguity sets $\mathcal{U}_l(\boldsymbol{y})$ by providing bounds on some moment functions following \cite{luo2020distributionally, yu2022multistage}. Given moment basis function vector 
{\color{black}$\boldsymbol{f}(\boldsymbol{\xi})=[f_{m} (\boldsymbol{\xi}),\ m \in \{1, \cdots, M\}]^{\mathsf T}$ with $M$ different moment functions}, 
the moment matching ambiguity set for mode $l$ is given by
{\color{black}
\begin{align} 
\mathcal{U}_l(\boldsymbol{y})=\mathcal{M}(\underline{\boldsymbol{u}}_l(\boldsymbol{y}), \bar{\boldsymbol{u}}_l(\boldsymbol{y})):=
\left\{\mathbb{P}_l \in \mathcal{P}(\Xi_l): \int_{\Xi_l} f_{m} (\boldsymbol{\xi}) \mathbb{P}_l(d\boldsymbol{\xi}) \in [\underline{u}_{l,m}(\boldsymbol{y}), \bar{u}_{l,m}(\boldsymbol{y})], \ m \in \{1, \cdots, M\} \right\},
\label{eq:moment-based}
\end{align}
}
where {\color{black}$\mathcal{P}(\Xi_l)$ 
is the set of all probability measures defined on the support set $\Xi_l$,} 
and $\underline{\boldsymbol{u}}_l(\boldsymbol{y}){\color{black}\le} \bar{\boldsymbol{u}}_l(\boldsymbol{y})$ are decision-dependent lower and upper bounds on the corresponding moment functions, respectively. 
{\color{black}We note that all the probability distributions $\mathbb{P}_l \in \mathcal{P}(\Xi_l)$ satisfy $\int_{\Xi_l} \mathbb{P}_l(d\boldsymbol{\xi}) = 1$ by definition.} 
We next derive monolithic reformulations to represent the multimodal $\rm{D^3RO}$ model \eqref{model:DRO} under two special cases of $\phi$-divergence set representing mode probability set $\Delta(\hat{p}(\boldsymbol{y}))$ in combination with the moment-based ambiguity set \eqref{eq:moment-based}. 


\subsubsection{Variation Distance based Multimodal Ambiguity with Moment-based Setting}
\begin{theorem}[Variation Distance + Moment-based]
\label{thm:MomentBasedVariation}
   If for any feasible $\boldsymbol{y}\in\mathcal{Y}$, the ambiguity set $\mathcal{U}_l(\boldsymbol{y})$ defined in \eqref{eq:moment-based} {\color{black}has a non-empty relative interior}, 
   then the multimodal $\rm{D^3RO}$ model \eqref{model:DRO} with variation distance set $\Delta(\hat{p}(\boldsymbol{y}))$ defined in \eqref{eq:modeDist-VariationDistance} and moment-based ambiguity set $\mathcal{U}_l(\boldsymbol{y})$ defined in \eqref{eq:moment-based} is equivalent to
    \begin{subequations}\label{model:variation+moment-general}
    \begin{align}   \min_{\boldsymbol{y},\lambda,\eta, r_l, \alpha_l,\underline{\boldsymbol{\beta}}_l,\bar{\boldsymbol{\beta}}_l} \quad &\boldsymbol{c}^{\mathsf T}\boldsymbol{y}+\eta+\rho\lambda+\sum_{l=1}^L\hat{p}_l(\boldsymbol{y})r_l\\
        \text{s.t.}\quad & \boldsymbol{y}\in\mathcal{Y},\ \lambda,\ \underline{\boldsymbol{\beta}}_l,\ \bar{\boldsymbol{\beta}}_l\ge 0,\ \forall l=1,\ldots, L \label{model:variation+moment-general-ConstrFirst}\\
        &  \alpha_l+\bar{\boldsymbol{\beta}}_l^{\mathsf T}\bar{\boldsymbol{u}}_l(\boldsymbol{y})-\underline{\boldsymbol{\beta}}_l^{\mathsf T}\underline{\boldsymbol{u}}_l(\boldsymbol{y})-\eta\le r_l,\ \forall l=1,\ldots,L \label{model:variation+moment-general-ConstrSecond}\\
        & \alpha_l+\bar{\boldsymbol{\beta}}_l^{\mathsf T}\bar{\boldsymbol{u}}_l(\boldsymbol{y})-\underline{\boldsymbol{\beta}}_l^{\mathsf T}\underline{\boldsymbol{u}}_l(\boldsymbol{y})-\eta\le \lambda,\ \forall l=1,\ldots, L\\
         & r_l\ge -\lambda,\ \forall l=1,\ldots, L \label{model:variation+moment-general-ConstrEnd}\\
         & {\color{black}\alpha_l+(\bar{\boldsymbol{\beta}}_l-\underline{\boldsymbol{\beta}}_l)^{\mathsf T}\boldsymbol{f}(\boldsymbol{\xi})\ge h(\boldsymbol{y},\boldsymbol{\xi}),\ \forall l=1,\ldots, L,\ \boldsymbol{\xi} \in \Xi_l} \label{eq:MomentBasedVariation-SupportConstr} 
    \end{align}
    \end{subequations}
\end{theorem}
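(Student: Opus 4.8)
The plan is to start from the generic $\phi$-divergence reformulation in Theorem~\ref{thm:DRO-PhiDivergence}, specialized to the variation distance via Theorem~\ref{thm:DRO-VariationDistance}, and then replace the inner worst-case expectation constraints \eqref{eq:WorstCaseExpectation} by an explicit dual. Concretely, Theorem~\ref{thm:DRO-VariationDistance} already gives Model~\eqref{model:variation-distance}, whose only remaining non-closed-form ingredient is $\psi_l=\max_{\mathbb{P}_l\in\mathcal{U}_l(\boldsymbol{y})}\mathbb{E}_{\mathbb{P}_l}[h(\boldsymbol{y},\boldsymbol{\xi})]$ with $\mathcal{U}_l(\boldsymbol{y})$ the moment-matching set $\mathcal{M}(\underline{\boldsymbol{u}}_l(\boldsymbol{y}),\bar{\boldsymbol{u}}_l(\boldsymbol{y}))$ from \eqref{eq:moment-based}. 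So the core task is to dualize this moment problem.

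The key steps, in order, are as follows. First, write the inner maximization $\max_{\mathbb{P}_l}\int_{\Xi_l} h(\boldsymbol{y},\boldsymbol{\xi})\,\mathbb{P}_l(d\boldsymbol{\xi})$ as an infinite-dimensional linear program over measures, with constraints $\int \mathbb{P}_l(d\boldsymbol{\xi})=1$ and $\int f_m(\boldsymbol{\xi})\,\mathbb{P}_l(d\boldsymbol{\xi})\in[\underline{u}_{l,m}(\boldsymbol{y}),\bar{u}_{l,m}(\boldsymbol{y})]$. Second, invoke conic/moment-problem duality (e.g.\ Shapiro's duality for moment problems): the non-empty relative interior assumption on $\mathcal{U}_l(\boldsymbol{y})$ is a Slater-type condition guaranteeing strong duality and dual attainment. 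The dual has a free scalar $\alpha_l$ for the normalization constraint and nonnegative multipliers $\bar{\boldsymbol{\beta}}_l,\underline{\boldsymbol{\beta}}_l\ge 0$ for the upper and lower moment bounds, yielding the dual objective $\alpha_l+\bar{\boldsymbol{\beta}}_l^{\mathsf T}\bar{\boldsymbol{u}}_l(\boldsymbol{y})-\underline{\boldsymbol{\beta}}_l^{\mathsf T}\underline{\boldsymbol{u}}_l(\boldsymbol{y})$ subject to the semi-infinite constraint $\alpha_l+(\bar{\boldsymbol{\beta}}_l-\underline{\boldsymbol{\beta}}_l)^{\mathsf T}\boldsymbol{f}(\boldsymbol{\xi})\ge h(\boldsymbol{y},\boldsymbol{\xi})$ for all $\boldsymbol{\xi}\in\Xi_l$ — exactly constraint \eqref{eq:MomentBasedVariation-SupportConstr}. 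Third, substitute this dual \emph{minimization} expression for $\psi_l$ into Model~\eqref{model:variation-distance}; since $\psi_l$ enters \eqref{model:variation-distance} only through the constraints $r_l\ge\psi_l-\eta$, $r_l\ge-\lambda$, $\psi_l-\eta\le\lambda$ and the objective is nondecreasing in each $\psi_l$ (via $r_l$), replacing $\psi_l$ by its minimum value is valid, so we may drop $\psi_l$ and write the constraints directly in terms of $\alpha_l+\bar{\boldsymbol{\beta}}_l^{\mathsf T}\bar{\boldsymbol{u}}_l(\boldsymbol{y})-\underline{\boldsymbol{\beta}}_l^{\mathsf T}\underline{\boldsymbol{u}}_l(\boldsymbol{y})$; this produces Constraints \eqref{model:variation+moment-general-ConstrSecond}--\eqref{model:variation+moment-general-ConstrEnd} and absorbs the minimization over $(\alpha_l,\underline{\boldsymbol{\beta}}_l,\bar{\boldsymbol{\beta}}_l)$ into the outer $\min$. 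Fourth, check that $\lambda\ge 0$ is still required (it is, from the variation-distance reformulation) and that the case $\rho=0$ collapses correctly as in Remark~\ref{remark:singleton}.

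I expect the main obstacle to be the careful justification of strong duality and dual attainment for the moment problem: one has to argue that the "non-empty relative interior" hypothesis is precisely the Slater condition needed, handle the compactness of $\Xi_l$ (Assumption~\ref{assumption:finite}) and finiteness of $h(\boldsymbol{y},\boldsymbol{\xi})$ to rule out an infinite inner value or a duality gap, and confirm that $h(\boldsymbol{y},\cdot)$ is suitably measurable/upper semicontinuous so that the semi-infinite dual constraint is well posed. A secondary, more routine point is the "push the inner minimization outward" argument — verifying monotonicity of the outer objective and constraints in $\psi_l$ so that the epigraph substitution is exact rather than merely a relaxation; this is straightforward but should be stated explicitly. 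Everything else (the algebra of matching dual terms to \eqref{model:variation+moment-general-ConstrSecond}--\eqref{eq:MomentBasedVariation-SupportConstr}) is bookkeeping.
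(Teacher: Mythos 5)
Your proposal follows essentially the same route as the paper's proof: dualize the inner moment problem over measures (with multipliers $\alpha_l$ for normalization and $\underline{\boldsymbol{\beta}}_l,\bar{\boldsymbol{\beta}}_l\ge 0$ for the moment bounds), invoke strong duality via the non-empty (relative interior) condition, and substitute the resulting dual minimization into the variation-distance reformulation of Theorem~\ref{thm:DRO-VariationDistance}. Your extra remarks on dual attainment and the monotonicity of the epigraph substitution simply make explicit what the paper leaves implicit when it ``combines'' the dual \eqref{model:MomentMatchingDual} with Theorem~\ref{thm:DRO-VariationDistance}, so the argument is correct.
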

\begin{proof}
    The maximization problem in Constraint \eqref{eq:WorstCaseExpectation} can be formulated as the linear program below
    {\color{black}
    \begin{align*}
        \psi_l=\max_{\mathbb{P}_l}\quad& \int_{\Xi_l} h(\boldsymbol{y},\boldsymbol{\xi}) \mathbb{P}_l(d\boldsymbol{\xi})\\
        \text{s.t.}\quad
        & \int_{\Xi_l} \mathbb{P}_l(d\boldsymbol{\xi}) = 1\\
        &\int_{\Xi_l} \boldsymbol{f}(\boldsymbol{\xi}) \mathbb{P}_l(d\boldsymbol{\xi})\ge\underline{\boldsymbol{u}}_l(\boldsymbol{y})\\
        &\int_{\Xi_l} \boldsymbol{f}(\boldsymbol{\xi}) \mathbb{P}_l(d\boldsymbol{\xi})\le\bar{\boldsymbol{u}}_l(\boldsymbol{y})\\
        &\mathbb{P}_l \in \mathcal{P'}(\Xi_l), 
    \end{align*}
    where $\mathcal{P'}(\Xi_l)$ represents the set of positive measures defined on the support set $\Xi_l$. We next assign dual variables $\alpha_l$ and $\underline{\boldsymbol{\beta}}_l,\ \bar{\boldsymbol{\beta}}_l$ to the above constraints. Then, we write the dual program as follows: 
    \begin{subequations}\label{model:MomentMatchingDual}
    \begin{align}        
\psi_l=\min_{\alpha_l,\underline{\boldsymbol{\beta}}_l,\bar{\boldsymbol{\beta}}_l}\quad & \alpha_l+\bar{\boldsymbol{\beta}}_l^{\mathsf T}\bar{\boldsymbol{u}}_l(\boldsymbol{y})-\underline{\boldsymbol{\beta}}_l^{\mathsf T}\underline{\boldsymbol{u}}_l(\boldsymbol{y})\\
        \text{s.t.}\quad& \alpha_l+(\bar{\boldsymbol{\beta}}_l-\underline{\boldsymbol{\beta}}_l)^{\mathsf T}\boldsymbol{f}(\boldsymbol{\xi})\ge h(\boldsymbol{y},\boldsymbol{\xi}),\ \forall \boldsymbol{\xi} \in \Xi_l \\ 
        &\underline{\boldsymbol{\beta}}_l\ge 0,\ \bar{\boldsymbol{\beta}}_l\ge 0
    \end{align}
    \end{subequations}
Since the ambiguity set $\mathcal{U}_l(\boldsymbol{y})$ is non-empty and by following the proof of Theorem 3.3 from  \cite{luo2020distributionally}, we can ensure strong duality between these primal and dual problems.
}
    Combining Model \eqref{model:MomentMatchingDual} with Theorem \ref{thm:DRO-VariationDistance}, we get the desired result.
\end{proof}

\subsubsection{$\chi^2$-Distance based Multimodal Ambiguity with Moment-based Setting}
\begin{theorem}[$\chi^2$-Distance + Moment-based]
\label{thm:MomentBasedChiSquare}
    If for any feasible $\boldsymbol{y}\in\mathcal{Y}$, the ambiguity set $\mathcal{U}_l(\boldsymbol{y})$ defined in \eqref{eq:moment-based} {\color{black}has a non-empty relative interior}, 
    then the multimodal $\rm{D^3RO}$ model \eqref{model:DRO} with $\chi^2$-distance set $\Delta(\hat{p}(\boldsymbol{y}))$ defined in \eqref{eq:modeDist-ChiSquare} and moment-based ambiguity set $\mathcal{U}_l(\boldsymbol{y})$ defined in \eqref{eq:moment-based} is equivalent to
    \begin{subequations}\label{model:ChiSquared+Moment}
        \begin{align}     \min_{\boldsymbol{y},\lambda,\eta,\boldsymbol{\psi}} \quad &\boldsymbol{c}^{\mathsf T}\boldsymbol{y}+\eta+\rho\lambda+2\lambda-2\sum_{l=1}^L\hat{p}_l(\boldsymbol{y})r_l\\
        \text{s.t.}\quad &\boldsymbol{y}\in\mathcal{Y},\ \lambda,\ \underline{\boldsymbol{\beta}}_l,\ \bar{\boldsymbol{\beta}}_l\ge 0,\ \forall l=1,\ldots, L\\
        & \psi_l\ge \alpha_l+\bar{\boldsymbol{\beta}}_l^{\mathsf T}\bar{\boldsymbol{u}}_l(\boldsymbol{y})-\underline{\boldsymbol{\beta}}_l^{\mathsf T}\underline{\boldsymbol{u}}_l(\boldsymbol{y}),\ \forall l=1,\ldots, L\\
        & \sqrt{r_l^2+\frac{1}{4}(\psi_l-\eta)^2}\le \lambda-\frac{1}{2}(\psi_l-\eta),\ \forall l=1,\ldots, L\\
        & \psi_l-\eta\le \lambda,\ \forall l=1,\ldots, L\\
        & {\color{black}\alpha_l+(\bar{\boldsymbol{\beta}}_l-\underline{\boldsymbol{\beta}}_l)^{\mathsf T}\boldsymbol{f}(\boldsymbol{\xi})\ge h(\boldsymbol{y},\boldsymbol{\xi}),\ \forall l=1,\ldots, L,\ \boldsymbol{\xi} \in \Xi_l} \label{eq:MomentBasedChiSquare-SupportConstr} 
    \end{align}
    \end{subequations}
\end{theorem}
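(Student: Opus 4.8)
The plan is to mirror the proof of Theorem~\ref{thm:MomentBasedVariation}, now combining the $\chi^2$-distance reformulation of Theorem~\ref{thm:DRO-ChiDistance} with a conic-duality treatment of the worst-case expectation constraint \eqref{eq:WorstCaseExpectation} under the moment-based ambiguity set \eqref{eq:moment-based}. First I would fix $\boldsymbol{y}\in\mathcal{Y}$ and a mode $l$, and write the inner problem $\psi_l=\max_{\mathbb{P}_l\in\mathcal{M}(\underline{\boldsymbol{u}}_l(\boldsymbol{y}),\bar{\boldsymbol{u}}_l(\boldsymbol{y}))}\mathbb{E}_{\mathbb{P}_l}[h(\boldsymbol{y},\boldsymbol{\xi})]$ as a semi-infinite linear moment problem over the cone of nonnegative measures on $\Xi_l$, exactly as in the proof of Theorem~\ref{thm:MomentBasedVariation}: one normalization constraint $\int_{\Xi_l}\mathbb{P}_l(d\boldsymbol{\xi})=1$ with dual variable $\alpha_l$, and the two-sided moment constraints $\underline{\boldsymbol{u}}_l(\boldsymbol{y})\le\int_{\Xi_l}\boldsymbol{f}(\boldsymbol{\xi})\mathbb{P}_l(d\boldsymbol{\xi})\le\bar{\boldsymbol{u}}_l(\boldsymbol{y})$ with dual variables $\underline{\boldsymbol{\beta}}_l,\bar{\boldsymbol{\beta}}_l\ge 0$. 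Dualizing yields precisely Model~\eqref{model:MomentMatchingDual}, i.e., $\psi_l=\min_{\alpha_l,\underline{\boldsymbol{\beta}}_l,\bar{\boldsymbol{\beta}}_l\ge 0}\{\alpha_l+\bar{\boldsymbol{\beta}}_l^{\mathsf T}\bar{\boldsymbol{u}}_l(\boldsymbol{y})-\underline{\boldsymbol{\beta}}_l^{\mathsf T}\underline{\boldsymbol{u}}_l(\boldsymbol{y})\ :\ \alpha_l+(\bar{\boldsymbol{\beta}}_l-\underline{\boldsymbol{\beta}}_l)^{\mathsf T}\boldsymbol{f}(\boldsymbol{\xi})\ge h(\boldsymbol{y},\boldsymbol{\xi})\ \forall\boldsymbol{\xi}\in\Xi_l\}$. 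Strong duality (zero gap, dual optimum attained) follows from the non-empty relative interior hypothesis on $\mathcal{U}_l(\boldsymbol{y})$, which furnishes the needed Slater-type condition, together with finiteness of $h(\boldsymbol{y},\boldsymbol{\xi})$ on $\mathcal{Y}\times\Xi$; here I would invoke the argument in the proof of Theorem~3.3 of \cite{luo2020distributionally}, as in Theorem~\ref{thm:MomentBasedVariation}.

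Next I would substitute this dual representation of $\psi_l$ into Model~\eqref{model:ChiDistance}. The key observation is that $\psi_l$ enters Model~\eqref{model:ChiDistance} only through the constraints $\sqrt{r_l^2+\tfrac14(\psi_l-\eta)^2}\le\lambda-\tfrac12(\psi_l-\eta)$ and $\psi_l-\eta\le\lambda$ (it is absent from the objective), and both constraints are monotone in $\psi_l$ in the sense that the feasible region shrinks as $\psi_l$ grows: writing $t=\psi_l-\eta$, the first reads $\sqrt{r_l^2+t^2/4}+t/2\le\lambda$, whose left-hand side is nondecreasing in $t$ because its derivative $\tfrac{t/2}{\sqrt{r_l^2+t^2/4}}+\tfrac12\ge 0$, and the second is trivially nondecreasing in $\psi_l$. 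Hence it is without loss of optimality to replace the equality $\psi_l=\min\{\cdots\}$ by the relaxed constraint $\psi_l\ge\alpha_l+\bar{\boldsymbol{\beta}}_l^{\mathsf T}\bar{\boldsymbol{u}}_l(\boldsymbol{y})-\underline{\boldsymbol{\beta}}_l^{\mathsf T}\underline{\boldsymbol{u}}_l(\boldsymbol{y})$ together with the lifted dual-feasibility constraint \eqref{eq:MomentBasedChiSquare-SupportConstr} and $\underline{\boldsymbol{\beta}}_l,\bar{\boldsymbol{\beta}}_l\ge 0$, now promoting $\alpha_l,\underline{\boldsymbol{\beta}}_l,\bar{\boldsymbol{\beta}}_l$ to decision variables. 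The equivalence is then a two-sided optimal-value argument: any feasible point of Model~\eqref{model:ChiDistance} extends to a feasible point of \eqref{model:ChiSquared+Moment} with the same objective by plugging in a dual optimizer (so $\psi_l$ equals the dual objective, and \eqref{eq:MomentBasedChiSquare-SupportConstr} is dual feasibility); conversely, for any feasible point of \eqref{model:ChiSquared+Moment}, weak duality gives $\psi_l\ge\psi_l^\star$ where $\psi_l^\star$ is the true worst-case expectation, and by the monotonicity just noted replacing $\psi_l$ by $\psi_l^\star$ preserves feasibility of the two $\chi^2$ constraints and leaves the objective unchanged, yielding a point feasible for Model~\eqref{model:ChiDistance} of no larger objective.

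I expect the two genuinely non-routine points to be (i) verifying the Slater/strong-duality hypothesis for the infinite-dimensional moment problem, which is where the non-empty relative interior of $\mathcal{U}_l(\boldsymbol{y})$ and the finiteness of $h$ are used and which is dispatched by appeal to \cite{luo2020distributionally} exactly as in Theorem~\ref{thm:MomentBasedVariation}, and (ii) the monotonicity argument that legitimizes relaxing $\psi_l=\min\{\cdots\}$ to an inequality inside the second-order cone constraints; everything else — introducing the auxiliary variables, recognizing the resulting system as the conic program \eqref{model:ChiSquared+Moment} — is bookkeeping identical to the variation-distance case. If $\boldsymbol{y}$ is binary and $\hat{p}_l(\boldsymbol{y})$ is affine, I would additionally note, as in the discussion after Theorem~\ref{thm:DRO-ChiDistance}, that the bilinear terms $\hat{p}_l(\boldsymbol{y})r_l$ and $\bar{\boldsymbol{\beta}}_l^{\mathsf T}\bar{\boldsymbol{u}}_l(\boldsymbol{y})$, $\underline{\boldsymbol{\beta}}_l^{\mathsf T}\underline{\boldsymbol{u}}_l(\boldsymbol{y})$ can be linearized via McCormick envelopes so that \eqref{model:ChiSquared+Moment} becomes an MISOCP, although this is outside the equivalence statement itself.
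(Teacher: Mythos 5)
Your proposal is correct and follows essentially the same route as the paper: dualize the inner worst-case-expectation problem to obtain Model~\eqref{model:MomentMatchingDual} (with strong duality supplied by the non-empty relative interior condition via Theorem~3.3 of \cite{luo2020distributionally}) and combine it with Theorem~\ref{thm:DRO-ChiDistance}, your explicit monotonicity argument for relaxing the equality on $\psi_l$ to the inequality $\psi_l\ge\alpha_l+\bar{\boldsymbol{\beta}}_l^{\mathsf T}\bar{\boldsymbol{u}}_l(\boldsymbol{y})-\underline{\boldsymbol{\beta}}_l^{\mathsf T}\underline{\boldsymbol{u}}_l(\boldsymbol{y})$ being precisely the step the paper leaves implicit. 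Only a cosmetic slip: the derivative of $\sqrt{r_l^2+t^2/4}$ in $t$ is $t/\bigl(4\sqrt{r_l^2+t^2/4}\bigr)$ rather than $t/\bigl(2\sqrt{r_l^2+t^2/4}\bigr)$, but since $\sqrt{r_l^2+t^2/4}\ge|t|/2$ the nondecreasing claim you need still holds, so the argument goes through unchanged.
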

\begin{proof}
    Combining Model \eqref{model:MomentMatchingDual} with Theorem \ref{thm:DRO-ChiDistance}, we get the desired result. 
\end{proof}

\begin{remark}
{\color{black}We note that our reformulations in Theorems \ref{thm:MomentBasedVariation} and \ref{thm:MomentBasedChiSquare} result in semi-infinite programs with infinite number of constraints, corresponding to the constraints \eqref{eq:MomentBasedVariation-SupportConstr} and \eqref{eq:MomentBasedChiSquare-SupportConstr}, respectively.}
Since these programs are not directly solvable by the off-the-shelf solvers, different approaches can be designed to provide their solutions including cutting-surface based and column-and-constraint generation based solution algorithms \citep{Mehrotra2014_CuttingSurface, zeng2013ColumnandConstraintGeneration}, and decision rules for their approximations \citep{Hanasusanto2015_MultimodalDRO, Bertsimas2019_DroDecisionRules}. 
{\color{black}To address this issue with continuous support sets, we propose a separation-based decomposition algorithm in Section \ref{sec:SolutionAlg} and further demonstrate its benefits in our computational study in Section \ref{sec:ComputationalStudy}.} 
Another extension of our model can be to include other moment-based ambiguity sets, such as the one proposed in \cite{Delage2010}. Although our results can be extended to this setting with decision-dependencies by considering the mean vector to lie in an ellipsoid centered at a decision-dependent estimate of the mean vector, and the second moment matrix to lie in a positive semi-definite cone leveraging a decision-dependent matrix, the resulting reformulation leads to a semi-definite program (SDP). The resulting SDP can further have binary or integer valued decision variables, depending on how the feasible region of the first-stage problem $\mathcal{Y}$ is defined and how the decision-dependent mode probabilities and moment functions are constructed, leading to mixed-integer SDPs with further computational difficulties. 
In Theorems \ref{thm:MomentBasedVariation} and \ref{thm:MomentBasedChiSquare}, reformulations are presented over generic decision-dependent lower and upper bounds of the moment-functions. To present computationally tractable reformulations, we provide special cases in Section \ref{sec:SpecialCases-MomentBased}, where the reformulation under the variation distance based multimodal ambiguity setting can result in a MILP formulation and the reformulation under the $\chi^2$-distance based multimodal ambiguity setting can result in a MISOCP formulation, under certain assumptions. 
\end{remark}




\subsubsection{Special Cases} 
\label{sec:SpecialCases-MomentBased}
As a special case, {\color{black}to obtain a tractable reformulation, we consider the setting where the support set of each mode $l = 1, \cdots, L$, is finite with $K$ elements, i.e., $\Xi_l:=\{\boldsymbol{\xi}^k\}_{k=1}^K$, while considering} the first and second moment of each uncertain parameter for the moment basis function $\boldsymbol{f}(\boldsymbol{\xi}^k)$ (i.e., take $M=2N$) and specifying their lower and upper bounds as follows:
\begin{subequations}
\begin{align}
&f_{n}(\boldsymbol{\xi}^k)=\xi_{n}^k,\ \underline{u}_{l,n}(\boldsymbol{y})=\mu_{l,n}(\boldsymbol{y})-\epsilon_{l,n}^{\mu}, \
\bar{u}_{l,n}(\boldsymbol{y})=\mu_{l,n}(\boldsymbol{y})+\epsilon_{l,n}^{\mu},\ \forall n\in[N] ,\label{eq:first_moment}\\
&f_{N+n}(\boldsymbol{\xi}^k)=(\xi_{n}^k)^2,\
\underline{u}_{l,N+n}(\boldsymbol{y})=S_{l,n}(\boldsymbol{y})\underline{\epsilon}_{l,n}^S,\
\bar{u}_{l,N+n}(\boldsymbol{y})=S_{l,n}(\boldsymbol{y})\bar{\epsilon}_{l,n}^S,\ \forall n\in[N]. \label{eq:second_moment}
\end{align}
\end{subequations}
Here, equation \eqref{eq:first_moment} concerns the first moment of each parameter $\xi_n^k$ and uses $\underline{u}_{l,n}(\boldsymbol{y})$ and $\bar{u}_{l,n}(\boldsymbol{y})$ to bound the first moment of parameter $\xi_{n}$ in an $\epsilon_{l,n}^{\mu}$-interval of the empirical mean function $\mu_{l,n}(\boldsymbol{y})$ for all $n\in[N]$. Similarly, equation \eqref{eq:second_moment} uses $\underline{u}_{l,N+n}(\boldsymbol{y})$ and $\bar{u}_{l,N+n}(\boldsymbol{y})$ to bound the second moment of parameter $\xi_{n}$ via scaling the empirical second moment function $S_{l,n}(\boldsymbol{y})$ with scaling parameters $\underline{\epsilon}_{l,n}^S$ and $\bar{\epsilon}_{l,n}^S$ for all $n\in[N]$. We further assume that the empirical first moment $\mu_{l,n}(\boldsymbol{y})$ and second moment $S_{l,n}(\boldsymbol{y})$ affinely depend on the first-stage decision $\boldsymbol{y}$, such that
\begin{align*}
&\mu_{l,n}(\boldsymbol{y})=\bar\mu_{l,n}\left(1+\sum_{i=1}^I\lambda_{l,n,i}^{\mu}y_{i}\right),\\
&S_{l,n}(\boldsymbol{y})=(\bar\mu_{l,n}^2+\bar\sigma_{l,n}^2)\left(1+\sum_{i=1}^I\lambda_{l,n,i}^Sy_{i}\right),
\end{align*}
where the empirical mean and standard deviation of the $n$-th uncertain parameter in mode $l$ are denoted by $\bar\mu_{l,n},\ \bar\sigma_{l,n}$, respectively. {\color{black} Here by assumption, the first and second moments will increase when any of the first-stage variables $y_{i}$ increases. Parameters $\lambda_{l,n,i}^{\mu},\ \lambda_{l,n,i}^S\in\mathbb{R}_{+}$ respectively represent the degree about how one unit of change in $y_{i}$ may affect the values of the first and second moments of $\xi_{n}$ for each $n\in[N]$. In practice, these parameters can be obtained by performing a regression analysis on historical data. For example, in facility location problems, $y_i$ is a binary decision variable, representing whether a facility is open at site $i$. The first and second moments of the random customer demand at site $n$ may increase as we open more facilities nearby, and the increasing rate may depend on the distance ($\text{dist}_{ni}$) between the facility $i$ and the customer site $n$, e.g., $\lambda_{l,n,i}^{\mu}=e^{a \cdot\text{dist}_{ni}+b}$ (see \cite{basciftci2021distributionally} for more discussions). Given a dataset $\mathcal{D}_K:=\{(\boldsymbol{y}^k,\boldsymbol{\text{dist}}^k,\boldsymbol{\mu}^k)\}_{k=1}^K$, one can use different regression approaches to get the best $a,b$ parameters by minimizing the mean squared error. We refer interested readers to \cite{sun2025contextual} for a case study on electric vehicle charging station location problems, where the authors use real-world data to learn the decision-dependent demand.}

Under this special case, the two-stage multimodal $\rm{D^3RO}$ model \eqref{model:variation+moment-general} using variation distance set $\Delta(\hat{p}(\boldsymbol{y}))$ defined in \eqref{eq:modeDist-VariationDistance} and moment-based ambiguity set $\mathcal{U}_l(\boldsymbol{y})$ defined in \eqref{eq:moment-based} can be recast as follows
\begin{subequations}\label{model:variation+moment-special}
    \begin{align}   \min_{\boldsymbol{y},\lambda,\eta,r_l,\alpha_l,\underline{\boldsymbol{\beta}}_l,\bar{\boldsymbol{\beta}}_l} \quad &\boldsymbol{c}^{\mathsf T}\boldsymbol{y}+\eta+\rho\lambda+\sum_{l=1}^L\hat{p}_l(\boldsymbol{y})r_l\\
        \text{s.t.}\quad & \boldsymbol{y}\in\mathcal{Y},\ \lambda,\ \underline{\boldsymbol{\beta}}_l,\ \bar{\boldsymbol{\beta}}_l\ge 0,\ \forall l=1,\ldots, L\\
        & \alpha_l+\sum_{n=1}^N\bar{\beta}_{l,n}(\bar\mu_{l,n}+\epsilon_{l,n}^{\mu})+\sum_{n=1}^N\sum_{i=1}^I\lambda_{l,n,i}^{\mu}\bar\mu_{l,n}\bar{\beta}_{l,n}y_{i} +\sum_{n=1}^N\bar{\beta}_{l,N+n}(\bar\mu_{l,n}^2+\bar\sigma_{l,n}^2)\bar{\epsilon}^S_{l,n}\nonumber\\
        &+\sum_{n=1}^N\sum_{i=1}^I\lambda^S_{l,n,i}\bar{\epsilon}^S_{l,n}(\bar\mu_{l,n}^2+\bar\sigma_{l,n}^2)\bar{\beta}_{l,N+n}y_{i}-\sum_{n=1}^N\underline{\beta}_{l,1+n}(\bar\mu_{l,n}-\epsilon_{l,n}^{\mu})-\sum_{n=1}^N\sum_{i=1}^I\lambda_{l,n,i}^{\mu}\bar\mu_{l,n}\underline{\beta}_{l,n}y_{i}\nonumber\\
\quad& -\sum_{n=1}^N\underline{\beta}_{l,N+n}(\bar\mu_{l,n}^2+\bar\sigma_{l,n}^2)\underline{\epsilon}^S_{l,n}-\sum_{n=1}^N\sum_{i=1}^I\lambda^S_{l,n,i}\underline{\epsilon}^S_{l,n}(\bar\mu_{l,n}^2+\bar\sigma_{l,n}^2)\underline{\beta}_{l,N+n}y_{i}-\eta\le \min\{r_l,\lambda\},\nonumber\\
&\hspace{27em}\forall l=1,\ldots,L\label{eq:Variation+Moment-Bilinear}\\
         & r_l\ge -\lambda,\ \forall l=1,\ldots, L\\
       & \alpha_l+\sum_{n\in [N]}\xi_{n}^k(\bar{\beta}_{l,n}-\underline{\beta}_{l,n})+\sum_{n\in[N]}(\xi_{n}^k)^2(\bar{\beta}_{l,N+n}-\underline{\beta}_{l,N+n})\ge h(\boldsymbol{y},\boldsymbol{\xi}^k),\nonumber\\
       &\hspace{21em}\forall l=1,\ldots, L,\ k=1,\ldots,K 
    \end{align}
    \end{subequations}
Model \eqref{model:variation+moment-special} gives rise to a non-convex optimization problem in general, since there are bilinear terms $\bar{\beta}_{l,n}y_{i},\ \bar{\beta}_{l,N+n}y_{i},$ $\underline{\beta}_{l,n}y_{i}, \ \underline{\beta}_{l,N+n}y_{i}$ in Constraints \eqref{eq:Variation+Moment-Bilinear}. If we further assume the first-stage decision variable $y_{i}$ to be binary valued,  we can provide exact reformulations of these bilinear terms using McCormick envelopes \citep{mccormick1976computability}. For example, if $\bar{\beta}_{l,n}\in [l^{\beta}_{l,n},u^{\beta}_{l,n}]$, we linearize the bilinear term $z_{l,n,i}=\bar{\beta}_{l,n}y_{i}$ as follows: 
\begin{subequations}\label{eq:McCormick}
    \begin{align}
& z_{l,n,i}\le \bar{\beta}_{l,n}-l^{\beta}_{l,n}(1-y_i)\\ 
& z_{l,n,i}\ge \bar{\beta}_{l,n}-u^{\beta}_{l,n}(1-y_i)\\ 
&  z_{l,n,i}\le  u^{\beta}_{l,n}y_i\\
&  z_{l,n,i}\ge  l^{\beta}_{l,n}y_i
\end{align}
\end{subequations}
For the sake of simplicity, we denote Constraints \eqref{eq:McCormick} as $(z_{l,n,i}, \bar{\beta}_{l,n}, y_{i})\in {\color{black}\mathcal{MC}}_{(l^{\beta}_{l,n},u^{\beta}_{l,n})}$. Later in Section \ref{sec:mode}, we will discuss different approaches to model the decision-dependent mode probabilities $\hat{p}(\boldsymbol{y})$ and present tractable reformulations for Model \eqref{model:variation+moment-special}. 

\subsection{Distance-based Ambiguity Sets}\label{sec:distance}

Different than the moment-based ambiguity sets introduced in Section \ref{sec:moment}, in a data-driven framework, we may have access to $K_l$ decision-dependent data samples $\{\hat{\boldsymbol{\xi}}_{lk}(\boldsymbol{y})\}_{k=1}^{K_l}$ from the true distribution under mode $l$. One possible way to construct these decision-dependent samples is to train regression models to learn the latent decision-dependency first and then use empirical residuals to build an empirical distribution $\hat{\mathbb{P}}_l(\boldsymbol{y})=\frac{1}{K_l}\sum_{k=1}^{K_l}\delta_{\hat{\boldsymbol{\xi}}_{lk}(\boldsymbol{y})}$. Specifically, given a dataset $\mathcal{D}_{K_l}:=\{(\boldsymbol{y}_{lk}, \boldsymbol{\xi}_{lk})\}_{k=1}^{K_l}$ from each mode $l$, we first estimate a regression function $\hat{g}_{l}(\boldsymbol{y})$ and construct empirical residuals $\boldsymbol{\epsilon}_{lk}:= \boldsymbol{\xi}_{lk} - \hat{g}_{l}(\boldsymbol{y}_{lk})$ for $k=1,\ldots, K_l$. Then the decision-dependent support points can be constructed as $\hat{\boldsymbol{\xi}}_{lk}(\boldsymbol{y})=\hat{g}_{l}(\boldsymbol{y})+\boldsymbol{\epsilon}_{lk}$. We refer interested readers to  \cite{kannan2022data,kannan2023residuals} for empirical residuals-based SAA and DRO approaches {\color{black}and \cite{zhu2024residuals} for a decision-dependent extension to the empirical residuals-based DRO approach}. 

Centered at the empirical distribution $\hat{\mathbb{P}}_l(\boldsymbol{y})$, we focus on Type-1 Wasserstein ambiguity sets  \citep{esfahani2018data} that are defined as follows:
\begin{equation} 
\mathcal{U}_l(\boldsymbol{y})=\mathbb{B}_{\epsilon_l}(\hat{\mathbb{P}}_l(\boldsymbol{y}))=\{\mathbb{P}_l\in\mathcal{P}(\Xi_l): \mathcal{W}_q(\mathbb{P}_l, \hat{\mathbb{P}}_l(\boldsymbol{y}))\le \epsilon_l\}\label{eq:distance-based}
\end{equation}
where 
$\mathcal{W}_q(\cdot,\cdot)$ is defined as:
$$\mathcal{W}_q(Q_1,Q_2):=\inf \left\{\int_{\Xi^2}||\xi_1-\xi_2||_q\Pi(d\xi_2,d\xi_2): \substack{\Pi\text{ is a joint distribution of $\xi_1$ and $\xi_2$}\\ \text{with marginals $Q_1$ and $Q_2$}}\right\}$$
with norm $||\cdot||_q$ denoting the reference distance in $\mathbb{R}^N$ corresponding to the Wasserstein distance metric between the distributions $Q_1$ and $Q_2$. 
Consequently, the ambiguity set defined in \eqref{eq:distance-based} considers a Wasserstein ball of radius $\epsilon_l$ for each mode $l=1,\cdots,L$ centered around the reference decision-dependent distribution $\hat{\mathbb{P}}_l(\boldsymbol{y})$. 


To obtain reformulations under the Wasserstein-based ambiguity set, we first consider our generic problem setting, and then further analyze two different cases depending on the structure of the second-stage cost function $h(\boldsymbol{y},\boldsymbol{\xi})$ defined in \eqref{eq:second-stage} when (i) the uncertainty only affects the objective ($\boldsymbol{T}(\boldsymbol{y})=0$) and (ii) the uncertainty only affects the constraints ($\boldsymbol{Q}=0$). For computational tractability, we present the results for objective uncertainty in the main manuscript and move the results for constraint uncertainty to Appendix \ref{append:constraint}.
 


\subsubsection{Variation Distance based Multimodal Ambiguity with Wasserstein-based Setting}
\label{sec:VariationDistanceWasserstein}

In this section, we first introduce the generic reformulation under variation distance based multimodal ambiguity with Wasserstein-based set \eqref{eq:distance-based} and then derive an additional result under objective uncertainty. 
Furthermore, we provide a special case of the proposed reformulations by highlighting the relationship between the decision-dependent distributionally robust models and their stochastic programming counterparts. 

\begin{theorem}[Variation Distance + Wasserstein-based]
\label{thm:VariationWasserstein}
    Using the $\Delta(\hat{p}(\boldsymbol{y}))$ defined in \eqref{eq:modeDist-VariationDistance} and $\mathcal{U}_l(\boldsymbol{y})$ defined in \eqref{eq:distance-based}, the two-stage multimodal $\rm{D^3RO}$ model \eqref{model:DRO} is equivalent to
        \begin{subequations}\label{model:distance-equivalent-poly}
    \begin{align}
\min_{\boldsymbol{y},\lambda,\eta,\boldsymbol{\psi}} \quad &\boldsymbol{c}^{\mathsf T}\boldsymbol{y}+\eta+\rho\lambda+\sum_{l=1}^L\hat{p}_l(\boldsymbol{y})r_l\\
        \text{s.t.}\quad & \boldsymbol{y}\in\mathcal{Y},\ \lambda\ge 0\\
        & \epsilon_l\gamma_l+\frac{1}{K_l}\sum_{k=1}^{K_l}w_{lk}-\eta\le r_l,\ \forall l=1,\ldots, L\\
        & r_l\ge -\lambda,\ \forall l=1,\ldots, L\\
        & \epsilon_l\gamma_l+\frac{1}{K_l}\sum_{k=1}^{K_l}w_{lk}-\eta\le \lambda,\ \forall l=1,\ldots, L\\
        &[-h_y]^*(\boldsymbol{z}_{lk}-\boldsymbol{\nu}_{lk})+\sigma_{\Xi_l}(\boldsymbol{\nu}_{lk})-\boldsymbol{z}_{lk}^{\mathsf T}\hat{\boldsymbol{\xi}}_{lk}(\boldsymbol{y})\le w_{lk},\ \forall k=1,\ldots,K_l,\ l=1,\ldots, L\label{eq:Wasserstein-Conjugate}\\
        & ||\boldsymbol{z}_{lk}||_{q^*}\le \gamma_l,\ \forall k=1,\ldots,K_l,\ l=1,\ldots, L,\label{eq:Wasserstein-DualNorm}
    \end{align}
    \end{subequations}
    where $h_y(\boldsymbol{\xi}):=h(\boldsymbol{y},\boldsymbol{\xi})$ with $\boldsymbol{y}$ suppressed and $[-h_y]^*(\boldsymbol{z}_{lk})$ is the conjugate of function $-h_y$ evaluated at $\boldsymbol{z}_{lk}$, i.e., $[-h_y]^*(\boldsymbol{z}_{lk})=\sup_{\boldsymbol{\xi}\in\mathbb{R}^N}\{\boldsymbol{z}_{lk}^{\mathsf T}\boldsymbol{\xi}+h(\boldsymbol{y},\boldsymbol{\xi})\}$.
\end{theorem}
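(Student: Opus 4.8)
The plan is to combine Theorem~\ref{thm:DRO-VariationDistance}, which already reduces the two-stage multimodal $\rm{D^3RO}$ model to Model~\eqref{model:variation-distance} with the abstract constraints \eqref{eq:WorstCaseExpectation}, with an explicit tractable reformulation of each worst-case expectation $\psi_l=\max_{\mathbb{P}_l\in\mathbb{B}_{\epsilon_l}(\hat{\mathbb{P}}_l(\boldsymbol{y}))}\mathbb{E}_{\mathbb{P}_l}[h(\boldsymbol{y},\boldsymbol{\xi})]$. So the bulk of the work is a single-mode Wasserstein worst-case expectation bound, carried out with $\boldsymbol{y}$ fixed and $\hat{\mathbb{P}}_l(\boldsymbol{y})=\frac{1}{K_l}\sum_{k=1}^{K_l}\delta_{\hat{\boldsymbol{\xi}}_{lk}(\boldsymbol{y})}$ treated as the (decision-dependent) empirical reference measure. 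First I would invoke the standard strong-duality result for Type-1 Wasserstein DRO over an empirical distribution (as in \cite{esfahani2018data}, and its extension to general upper semicontinuous integrands): the inner supremum equals
\begin{equation*}
\psi_l=\min_{\gamma_l\ge 0}\left\{\epsilon_l\gamma_l+\frac{1}{K_l}\sum_{k=1}^{K_l}\sup_{\boldsymbol{\xi}\in\Xi_l}\bigl\{h(\boldsymbol{y},\boldsymbol{\xi})-\gamma_l\,\|\boldsymbol{\xi}-\hat{\boldsymbol{\xi}}_{lk}(\boldsymbol{y})\|_q\bigr\}\right\}.
\end{equation*}

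Next I would linearize each inner supremum $\sup_{\boldsymbol{\xi}\in\Xi_l}\{h_y(\boldsymbol{\xi})-\gamma_l\|\boldsymbol{\xi}-\hat{\boldsymbol{\xi}}_{lk}(\boldsymbol{y})\|_q\}$ using convex-analytic tools. Introducing the dual-norm representation $\gamma_l\|\boldsymbol{u}\|_q=\sup_{\|\boldsymbol{z}_{lk}\|_{q^*}\le\gamma_l}\boldsymbol{z}_{lk}^{\mathsf T}\boldsymbol{u}$ and a change of variables $\boldsymbol{u}=\boldsymbol{\xi}-\hat{\boldsymbol{\xi}}_{lk}(\boldsymbol{y})$, together with the indicator $\delta_{\Xi_l}$ whose conjugate is the support function $\sigma_{\Xi_l}$, the pointwise sup becomes an infimum over $\boldsymbol{z}_{lk}$ (with $\|\boldsymbol{z}_{lk}\|_{q^*}\le\gamma_l$) and an auxiliary splitting variable $\boldsymbol{\nu}_{lk}$ that separates the $-h_y$ term from the support-set constraint. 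Concretely, $\sup_{\boldsymbol{\xi}\in\Xi_l}\{h_y(\boldsymbol{\xi})-\boldsymbol{z}_{lk}^{\mathsf T}(\boldsymbol{\xi}-\hat{\boldsymbol{\xi}}_{lk}(\boldsymbol{y}))\}=[-h_y]^*(\boldsymbol{z}_{lk}-\boldsymbol{\nu}_{lk})+\sigma_{\Xi_l}(\boldsymbol{\nu}_{lk})-\boldsymbol{z}_{lk}^{\mathsf T}\hat{\boldsymbol{\xi}}_{lk}(\boldsymbol{y})$ after minimizing over $\boldsymbol{\nu}_{lk}$, using the conjugate-of-a-sum (infimal convolution) identity; this is exactly the left-hand side of \eqref{eq:Wasserstein-Conjugate}. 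Epigraph variables $w_{lk}$ for each term and $\gamma_l$ as the Lipschitz/radius multiplier then give $\psi_l=\min\{\epsilon_l\gamma_l+\frac{1}{K_l}\sum_k w_{lk}: \eqref{eq:Wasserstein-Conjugate},\eqref{eq:Wasserstein-DualNorm}\}$.

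Finally I would substitute this representation of $\psi_l$ into Model~\eqref{model:variation-distance}. Since that outer model only ever uses $\psi_l$ through upper bounds ($r_l\ge\psi_l-\eta$, $\psi_l-\eta\le\lambda$), the inner $\min$ over $(\gamma_l,\boldsymbol{z}_{lk},\boldsymbol{\nu}_{lk},w_{lk})$ can be merged with the outer minimization — a standard epigraphic argument — yielding Model~\eqref{model:distance-equivalent-poly} with the term $\epsilon_l\gamma_l+\frac{1}{K_l}\sum_k w_{lk}$ replacing $\psi_l$ everywhere. The main obstacle is the rigorous justification of strong duality in the inner Wasserstein problem in the present \emph{decision-dependent} setting: one must verify that the finite-recourse assumption (Assumption~\ref{assump:complete-recourse} plus finiteness of $h$) and compactness/closedness of $\Xi_l$ (Assumption~\ref{assumption:finite}, or the polyhedral support sets discussed around it) ensure no duality gap and attainment, so that the $\sup$ over $\boldsymbol{\xi}$ is well defined and the conjugate manipulations are valid — and to note that $h_y$ being piecewise linear concave in $\boldsymbol{\xi}$ (as a minimization LP value function) makes $-h_y$ convex, so its conjugate $[-h_y]^*$ is well behaved and the reformulation is exact rather than merely an upper bound. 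The remaining steps (dual-norm substitution, conjugate-of-sum, epigraph reformulation) are routine convex analysis.
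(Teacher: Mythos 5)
Your proposal is correct and follows essentially the same route as the paper: the paper simply cites Theorem 4.2 of \cite{esfahani2018data} to obtain the dual representation of $\psi_l$ (the formulation with $[-h_y]^*$, $\sigma_{\Xi_l}$, and the dual-norm bound) and then combines it with Theorem~\ref{thm:DRO-VariationDistance}. Your extra steps—re-deriving the conjugate/support-function form from the penalized duality and making the epigraph-merging argument explicit—are just a more detailed unpacking of what the paper leaves to the citation and the phrase ``combining ... yields the desired result.''
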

\begin{proof}
    According to \cite{esfahani2018data}, Theorem 4.2, we have
    \begin{subequations}\label{model:WassersteinDual}
    \begin{align}
       \psi_l=\inf\quad& \epsilon_l\gamma_l+\frac{1}{K_l}\sum_{k=1}^{K_l}w_{lk}\\
        \text{s.t.}\quad &  [-h_y]^*(\boldsymbol{z}_{lk}-\boldsymbol{\nu}_{lk})+\sigma_{\Xi_l}(\boldsymbol{\nu}_{lk})-\boldsymbol{z}_{lk}^{\mathsf T}\hat{\boldsymbol{\xi}}_{lk}(\boldsymbol{y})\le w_{lk},\ \forall k=1,\ldots,K_l,\ l=1,\ldots, L\label{eq:Wasserstein-Conjugate}\\
        & ||\boldsymbol{z}_{lk}||_{q^*}\le \gamma_l,\ \forall k=1,\ldots,K_l,\ l=1,\ldots, L.\label{eq:Wasserstein-DualNorm}
    \end{align}
    \end{subequations}
    Combining Model \eqref{model:WassersteinDual} with Theorem \ref{thm:DRO-VariationDistance} yields the desired result.
\end{proof}

Next, we derive the following result under objective uncertainty of the second-stage problem, when polyhedral uncertainty sets are considered to define the support of the distribution of each mode. 

\begin{theorem}[Variation Distance + Wasserstein-based + Objective Uncertainty]
\label{thm:VariationWasserstein_Obj}
    Suppose $\boldsymbol{T}(\boldsymbol{y})=0$, $\Xi_l=\{\boldsymbol{\xi}:\ \boldsymbol{C}_l\boldsymbol{\xi}\le \boldsymbol{d}_l\}$, and for any given $\boldsymbol{y}\in\mathcal{Y}$, the feasible region $\{\boldsymbol{x}:W\boldsymbol{x}\ge \boldsymbol{R}(\boldsymbol{y})\}$ is non-empty and compact. The two-stage multimodal $\rm{D^3RO}$ model \eqref{model:DRO} with variation distance set $\Delta(\hat{p}(\boldsymbol{y}))$ defined in \eqref{eq:modeDist-VariationDistance} and Wasserstein ambiguity set $\mathcal{U}_l(\boldsymbol{y})$ defined in \eqref{eq:distance-based} can be tractable for any $q\in[1,\infty]$ and admits the following equivalent formulation:
    \begin{subequations}\label{model:Variation+Wasserstein-Objective}
    \begin{align}
    \min\quad &\boldsymbol{c}^{\mathsf T}\boldsymbol{y}+\eta+\rho\lambda+\sum_{l=1}^L\hat{p}_l(\boldsymbol{y})r_l\\
        \text{s.t.}\quad
         & \boldsymbol{y}\in\mathcal{Y},\ \lambda,\ \boldsymbol{\mu}_{lk}\ge 0,\ \forall k=1,\ldots,K_l,\ l=1,\ldots, L\label{eq:Variation-Wasserstein-domain}\\
        & \epsilon_l\gamma_l+\frac{1}{K_l}\sum_{k=1}^{K_l}w_{lk}-\eta\le r_l,\ \forall l=1,\ldots, L\\
        & r_l\ge -\lambda,\ \forall l=1,\ldots, L\\
        & \epsilon_l\gamma_l+\frac{1}{K_l}\sum_{k=1}^{K_l}w_{lk}-\eta\le \lambda,\ \forall l=1,\ldots, L \label{eq:Variation-Wasserstein-domain2}\\
        &(\boldsymbol{Q}\hat{\boldsymbol{\xi}}_{lk}(\boldsymbol{y})+\boldsymbol{q})^{\mathsf T}\boldsymbol{x}_{lk}+(\boldsymbol{d}_l-\boldsymbol{C}_l\hat{\boldsymbol{\xi}}_{lk}(\boldsymbol{y}))^{\mathsf T}\boldsymbol{\mu}_{lk}\le w_{lk},\ \forall k=1,\ldots,K_l,\ l=1,\ldots, L\label{eq:Variation+Wasserstein-Bilinear}\\
    &\boldsymbol{W}\boldsymbol{x}_{lk}\ge \boldsymbol{R}(\boldsymbol{y}),\ \forall k=1,\ldots,K_l,\ l=1,\ldots,L \label{eq:Variation-Wasserstein-Constr} \\
        & ||\boldsymbol{Q}^{\mathsf T}\boldsymbol{x}_{lk}-\boldsymbol{C}_l^{\mathsf T}\boldsymbol{\mu}_{lk}||_{q^*}\le \gamma_l,\ \forall k=1,\ldots,K_l,\ l=1,\ldots,L, \label{eq:Variation-Wasserstein-dualGamma}
    \end{align}
    \end{subequations}
    where $\frac{1}{q}+\frac{1}{q^*}=1$.
\end{theorem}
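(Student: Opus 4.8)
The plan is to specialize the generic Wasserstein reformulation in Theorem~\ref{thm:VariationWasserstein} to the case of objective uncertainty, where $\boldsymbol{T}(\boldsymbol{y})=0$ and the support sets are polyhedral. Starting from Model~\eqref{model:distance-equivalent-poly}, the only quantities that need to be made explicit are the conjugate $[-h_y]^*$ and the support function $\sigma_{\Xi_l}$. First I would observe that when $\boldsymbol{T}(\boldsymbol{y})=0$, the second-stage problem \eqref{eq:second-stage} becomes $h_y(\boldsymbol{\xi})=\min\{(\boldsymbol{Q}\boldsymbol{\xi}+\boldsymbol{q})^{\mathsf T}\boldsymbol{x}:\ \boldsymbol{W}\boldsymbol{x}\ge \boldsymbol{R}(\boldsymbol{y})\}$, which is an \emph{affine} function of $\boldsymbol{\xi}$ for each fixed feasible $\boldsymbol{x}$; taking the pointwise minimum over $\boldsymbol{x}$ in the (non-empty, compact) polytope $\{\boldsymbol{x}:\boldsymbol{W}\boldsymbol{x}\ge\boldsymbol{R}(\boldsymbol{y})\}$ makes $h_y$ a concave, piecewise-affine function of $\boldsymbol{\xi}$, so $-h_y$ is convex piecewise-affine and its conjugate is well behaved.

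The key computational step is to unpack Constraint~\eqref{eq:Wasserstein-Conjugate}. Rather than computing $[-h_y]^*$ in closed form, I would rewrite the inner supremum $\sup_{\boldsymbol{\xi}\in\Xi_l}\{h(\boldsymbol{y},\boldsymbol{\xi})-\boldsymbol{z}_{lk}^{\mathsf T}(\boldsymbol{\xi}-\hat{\boldsymbol{\xi}}_{lk}(\boldsymbol{y}))\}$ that underlies the $k$-th constraint of Model~\eqref{model:WassersteinDual}. Substituting the $\min$-representation of $h(\boldsymbol{y},\boldsymbol{\xi})$ and swapping the outer $\sup_{\boldsymbol{\xi}}$ with the inner $\min_{\boldsymbol{x}}$ is not immediately valid, so instead I would follow the standard worst-case-expectation route: express $\psi_l = \sup_{\mathbb{P}_l\in\mathbb{B}_{\epsilon_l}(\hat{\mathbb{P}}_l(\boldsymbol{y}))}\mathbb{E}_{\mathbb{P}_l}[h_y(\boldsymbol{\xi})]$, use the strong-duality result of \citet{esfahani2018data} to obtain $\psi_l=\inf_{\gamma_l\ge0}\{\epsilon_l\gamma_l+\frac{1}{K_l}\sum_k \sup_{\boldsymbol{\xi}\in\Xi_l}(h_y(\boldsymbol{\xi})-\gamma_l\|\boldsymbol{\xi}-\hat{\boldsymbol{\xi}}_{lk}(\boldsymbol{y})\|_q)\}$, and then for each $k$ introduce $\boldsymbol{x}_{lk}$ as the second-stage decision evaluated at the worst-case scenario. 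The supremum $\sup_{\boldsymbol{\xi}:\boldsymbol{C}_l\boldsymbol{\xi}\le\boldsymbol{d}_l}\{(\boldsymbol{Q}\boldsymbol{\xi}+\boldsymbol{q})^{\mathsf T}\boldsymbol{x}_{lk}-\gamma_l\|\boldsymbol{\xi}-\hat{\boldsymbol{\xi}}_{lk}(\boldsymbol{y})\|_q\}$ is a convex program in $\boldsymbol{\xi}$; dualizing the polyhedral constraint with multipliers $\boldsymbol{\mu}_{lk}\ge0$ and handling the norm term via the dual norm $\|\cdot\|_{q^*}$ yields exactly Constraints~\eqref{eq:Variation+Wasserstein-Bilinear}, \eqref{eq:Variation-Wasserstein-Constr}, and \eqref{eq:Variation-Wasserstein-dualGamma} after a change of variables to center the norm at $\hat{\boldsymbol{\xi}}_{lk}(\boldsymbol{y})$ (which produces the $(\boldsymbol{d}_l-\boldsymbol{C}_l\hat{\boldsymbol{\xi}}_{lk}(\boldsymbol{y}))^{\mathsf T}\boldsymbol{\mu}_{lk}$ and $-\boldsymbol{C}_l^{\mathsf T}\boldsymbol{\mu}_{lk}$ terms). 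Finally I would substitute this per-mode reformulation of $\psi_l$ back into the variation-distance master problem from Theorem~\ref{thm:DRO-VariationDistance}, replacing the defining constraints of $\psi_l$ by the $\le w_{lk}$ inequalities, to arrive at Model~\eqref{model:Variation+Wasserstein-Objective}; tractability for all $q\in[1,\infty]$ follows because $\|\cdot\|_{q^*}$ is a tractable (conic-representable) norm.

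The main obstacle I anticipate is justifying the exchange of the minimization over $\boldsymbol{x}$ (inside $h_y$) with the maximization over $\boldsymbol{\xi}$ (the worst-case scenario), and more precisely arguing that it suffices to optimize jointly over $(\boldsymbol{x}_{lk},\boldsymbol{\mu}_{lk},\gamma_l)$ in a single minimization without losing the worst-case guarantee. This is where compactness of $\{\boldsymbol{x}:\boldsymbol{W}\boldsymbol{x}\ge\boldsymbol{R}(\boldsymbol{y})\}$ and of $\Xi_l$, plus relatively complete recourse and finiteness of $h$ (Assumption~\ref{assump:complete-recourse}), are essential: they let me invoke a minimax/Sion-type argument, or equivalently take the LP dual of the inner $\sup_{\boldsymbol{\xi}}$ term-by-term, so that the resulting $\min$ over auxiliary variables is exact rather than merely an upper bound. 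A secondary technical point is the careful bookkeeping of the norm-duality step: the penalty $\gamma_l\|\boldsymbol{\xi}-\hat{\boldsymbol{\xi}}_{lk}(\boldsymbol{y})\|_q$ must be conjugated correctly so that the linear part $\boldsymbol{Q}^{\mathsf T}\boldsymbol{x}_{lk}-\boldsymbol{C}_l^{\mathsf T}\boldsymbol{\mu}_{lk}$ lands inside the $\|\cdot\|_{q^*}\le\gamma_l$ ball; I would verify this on the Fenchel conjugate of $\gamma_l\|\cdot\|_q$, which is the indicator of the dual-norm ball of radius $\gamma_l$.
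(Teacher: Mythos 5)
Your proposal is correct and follows essentially the same route as the paper: the paper likewise starts from the Esfahani--Kuhn Wasserstein duality (already packaged in Theorem~\ref{thm:VariationWasserstein}), exchanges the inner $\min_{\boldsymbol{x}}$ with the worst-case $\sup_{\boldsymbol{\xi}}$ using compactness of $\{\boldsymbol{x}:\boldsymbol{W}\boldsymbol{x}\ge\boldsymbol{R}(\boldsymbol{y})\}$, evaluates the support function $\sigma_{\Xi_l}$ by LP duality with multipliers $\boldsymbol{\mu}_{lk}\ge 0$, and substitutes the resulting constraints back into the variation-distance master problem of Theorem~\ref{thm:DRO-VariationDistance}. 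The only cosmetic difference is that you work from the norm-penalized form of the Wasserstein dual and dualize the combined inner supremum, whereas the paper computes the conjugate $[-h_y]^*$ and $\sigma_{\Xi_l}$ separately; both yield the identical constraints \eqref{eq:Variation+Wasserstein-Bilinear}--\eqref{eq:Variation-Wasserstein-dualGamma}.
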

\begin{proof}
Because $\boldsymbol{T}(\boldsymbol{y})=0$ and the feasible region $\{\boldsymbol{x}:W\boldsymbol{x}\ge \boldsymbol{R}(\boldsymbol{y})\}$ is always non-empty and compact, we have
    \begin{align*}
        [-h_y]^*(\boldsymbol{z}_{lk}-\boldsymbol{\nu}_{lk})=&\sup_{\boldsymbol{\xi}}\{(\boldsymbol{z}_{lk}-\boldsymbol{\nu}_{lk})^{\mathsf T}\boldsymbol{\xi}+\inf_{\boldsymbol{x}_{lk}}\{(\boldsymbol{Q}\boldsymbol{\xi}+\boldsymbol{q})^{\mathsf T}\boldsymbol{x}_{lk}: \boldsymbol{W}\boldsymbol{x}_{lk}\ge \boldsymbol{R}(\boldsymbol{y})\}\}\\
        =&\inf_{\boldsymbol{x}_{lk}}\{\boldsymbol{q}^{\mathsf T}\boldsymbol{x}_{lk}+\sup_{\boldsymbol{\xi}}\{\boldsymbol{\xi}^{\mathsf T}(\boldsymbol{z}_{lk}-\boldsymbol{\nu}_{lk}+\boldsymbol{Q}^{\mathsf T}\boldsymbol{x}_{lk})\}: \boldsymbol{W}\boldsymbol{x}_{lk}\ge \boldsymbol{R}(\boldsymbol{y})\}\\
        =&\begin{cases}
            \boldsymbol{q}^{\mathsf T}\boldsymbol{x}_{lk},\ \text{if there exists $\boldsymbol{x}_{lk}$ with $\boldsymbol{z}_{lk}-\boldsymbol{\nu}_{lk}=-\boldsymbol{Q}^{\mathsf T}\boldsymbol{x}_{lk}$ and $\boldsymbol{W}\boldsymbol{x}_{lk}\ge \boldsymbol{R}(\boldsymbol{y})$}\\
            +\infty,\ \text{otherwise}
        \end{cases}
    \end{align*}
    On the other hand, 
    \begin{align*}
        \sigma_{\Xi_l}(\boldsymbol{\nu}_{lk})=\begin{cases}\sup_{\boldsymbol{\xi}_{lk}}\boldsymbol{\nu}_{lk}^{\mathsf T}\boldsymbol{\xi}_{lk}\\
        \text{s.t.}\quad \boldsymbol{C}_l\boldsymbol{\xi}_{lk}\le \boldsymbol{d}_l
        \end{cases}=\begin{cases}
            \inf_{\mu_{lk}\ge 0}\boldsymbol{d}_l^{\mathsf T}\boldsymbol{\mu}_{lk}\\
            \text{s.t.}\quad \boldsymbol{C}_l^{\mathsf T}\boldsymbol{\mu}_{lk}=\boldsymbol{\nu}_{lk}
        \end{cases}
    \end{align*}
    As a result, Constraints \eqref{eq:Wasserstein-Conjugate}-\eqref{eq:Wasserstein-DualNorm} become
    \begin{subequations}\label{eq:Wasserstein-h-conjugate}
    \begin{align}
         &(\boldsymbol{Q}\hat{\boldsymbol{\xi}}_{lk}(\boldsymbol{y})+\boldsymbol{q})^{\mathsf T}\boldsymbol{x}_{lk}+(\boldsymbol{d}_l-\boldsymbol{C}_l\hat{\boldsymbol{\xi}}_{lk}(\boldsymbol{y}))^{\mathsf T}\boldsymbol{\mu}_{lk}\le w_{lk},\ \forall k=1,\ldots,K_l,\ l=1,\ldots, L\\
    &\boldsymbol{W}\boldsymbol{x}_{lk}\ge \boldsymbol{R}(\boldsymbol{y}),\ \forall k=1,\ldots,K_l,\ l=1,\ldots,L\\
        & ||\boldsymbol{Q}^{\mathsf T}\boldsymbol{x}_{lk}-\boldsymbol{C}_l^{\mathsf T}\boldsymbol{\mu}_{lk}||_{q^*}\le \gamma_l,\ \forall k=1,\ldots,K_l,\ l=1,\ldots,L\\
        & \mu_{lk}\ge 0, \ \forall k=1,\ldots, K_l,\ l=1,\ldots,L
    \end{align}
    \end{subequations}
    Combining Model \eqref{model:variation-distance} with the above constraints, we get the desired results.
\end{proof}

\begin{remark}
    According to Remark \ref{remark:singleton}, when $\rho=0$, Model \eqref{model:Variation+Wasserstein-Objective} reduces to $\min\{\boldsymbol{c}^{\mathsf T}\boldsymbol{y}+\sum_{l=1}^L\hat{p}_l(\boldsymbol{y})(\epsilon_l\gamma_l+\frac{1}{K_l}\sum_{k=1}^{K_l}w_{lk}):\text{\eqref{eq:Variation-Wasserstein-domain},\eqref{eq:Variation+Wasserstein-Bilinear}--\eqref{eq:Variation-Wasserstein-dualGamma}}\}$. Furthermore, if $\epsilon_l=0,\ \forall l=1,\ldots, L$, we are in an \textit{ambiguity-free} setting and Model \eqref{model:Variation+Wasserstein-Objective} reduces to a decision-dependent sample average approximation (DD-SAA) problem. Indeed, for $\epsilon_l=0$, the variable $\gamma_l$ can be set to sufficiently large at no penalty, and thus $\boldsymbol{\mu}_{lk}=0$ and $w_{lk}=(\boldsymbol{Q}\hat{\boldsymbol{\xi}}_{lk}(\boldsymbol{y})+\boldsymbol{q})^{\mathsf T}\boldsymbol{x}_{lk}$ at optimality. In this case, Model \eqref{model:Variation+Wasserstein-Objective} is equivalent to
    \begin{subequations}\label{model:DD-SAA}
    \begin{align}
     \mbox{{\bf (DD-SAA)}}:  \min\quad& \boldsymbol{c}^{\mathsf T}\boldsymbol{y}+\sum_{l=1}^L\frac{\hat{p}_l(\boldsymbol{y})}{K_l}\sum_{k=1}^{K_l}(\boldsymbol{Q}\hat{\boldsymbol{\xi}}_{lk}(\boldsymbol{y})+\boldsymbol{q})^{\mathsf T}\boldsymbol{x}_{lk}\\
        \text{s.t.}\quad& \boldsymbol{y}\in\mathcal{Y}\\
        &\boldsymbol{W}\boldsymbol{x}_{lk}\ge \boldsymbol{R}(\boldsymbol{y}),\ \forall k=1,\ldots,K_l,\ l=1,\ldots,L
    \end{align}
    \end{subequations}
    The DD-SAA model \eqref{model:DD-SAA} is a sample average approximation of the MM-DD-SP we introduced in Remark \ref{remark:DD-SP} when $\rho=0$. Note that we do not call Model \eqref{model:DD-SAA} a multimodal DD-SAA since we will show that it is equivalent to a single-modal DD-SAA in Remark \ref{remark:DD-SAA}.
\end{remark}


\subsubsection{$\chi^2$-Distance based Multimodal Ambiguity with Wasserstein-based Setting}

Similar to Section \ref{sec:VariationDistanceWasserstein}, in this section, we first present the generic reformulation under $\chi^2$-distance based multimodal ambiguity with Wasserstein-based set \eqref{eq:distance-based} and then derive an additional result under objective uncertainty. 

\begin{theorem}[$\chi^2$-Distance + Wasserstein-based]\label{thm:ChiDistance+Wasserstein}
    Using the $\chi^2$-distance set $\Delta(\hat{p}(\boldsymbol{y}))$ defined in \eqref{eq:modeDist-ChiSquare} and Wasserstein ambiguity set $\mathcal{U}_l(\boldsymbol{y})$ defined in \eqref{eq:distance-based}, the two-stage multimodal $\rm{D^3RO}$ model \eqref{model:DRO} is equivalent to
    \begin{subequations}
            \begin{align}
        \min_{\boldsymbol{y},\lambda,\eta,\boldsymbol{\psi}} \quad &\boldsymbol{c}^{\mathsf T}\boldsymbol{y}+\eta+\rho\lambda+2\lambda-2\sum_{l=1}^L\hat{p}_l(\boldsymbol{y})r_l\\
        \text{s.t.}\quad & \boldsymbol{y}\in\mathcal{Y},\ \lambda\ge 0\\
        & \sqrt{r_l^2+\frac{1}{4}(\psi_l-\eta)^2}\le \lambda-\frac{1}{2}(\psi_l-\eta),\ \forall l=1,\ldots, L\\
        & \psi_l-\eta\le \lambda,\ \forall l=1,\ldots, L\\
        & \epsilon_l\gamma_l+\frac{1}{K_l}\sum_{k=1}^{K_l}w_{lk}\le \psi_l,\ \forall l=1,\ldots, L\\
        &[-h_y]^*(\boldsymbol{z}_{lk}-\boldsymbol{\nu}_{lk})+\sigma_{\Xi_l}(\boldsymbol{\nu}_{lk})-\boldsymbol{z}_{lk}^{\mathsf T}\hat{\boldsymbol{\xi}}_{lk}(\boldsymbol{y})\le w_{lk},\ \forall k=1,\ldots,K_l,\ l=1,\ldots, L\\
        & ||\boldsymbol{z}_{lk}||_{q^*}\le \gamma_l,\ \forall k=1,\ldots,K_l,\ l=1,\ldots, L
    \end{align}
    \end{subequations}
\end{theorem}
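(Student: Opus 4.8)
The plan is to combine the generic $\phi$-divergence reformulation of Theorem~\ref{thm:DRO-ChiDistance} with the Wasserstein dual reformulation of the worst-case expectation $\psi_l = \max_{\mathbb{P}_l \in \mathcal{U}_l(\boldsymbol{y})}\mathbb{E}_{\mathbb{P}_l}[h(\boldsymbol{y},\boldsymbol{\xi})]$ that already appeared (for the variation-distance case) in the proof of Theorem~\ref{thm:VariationWasserstein}. The key observation is that the $\chi^2$-distance enters only through the outer layer of ambiguity, i.e.\ through $\Delta(\hat p(\boldsymbol{y}))$, whereas the Wasserstein ball governs the inner layer $\mathcal{U}_l(\boldsymbol{y})$. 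Since Theorem~\ref{thm:DRO-PhiDivergence} (and its specialization Theorem~\ref{thm:DRO-ChiDistance}) already isolates $\psi_l$ as an abstract placeholder for the inner worst-case expectation via Constraint~\eqref{eq:WorstCaseExpectation}, the two layers decouple and can be handled independently.

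First I would invoke Theorem~\ref{thm:DRO-ChiDistance} to rewrite Model~\eqref{model:DRO} as Model~\eqref{model:ChiDistance}, which contains the abstract constraint $\psi_l = \max_{\mathbb{P}_l \in \mathcal{U}_l(\boldsymbol{y})}\mathbb{E}_{\mathbb{P}_l}[h(\boldsymbol{y},\boldsymbol{\xi})]$ for each $l$. Next, for each fixed $l$ and fixed $\boldsymbol{y}$, I would apply Theorem~4.2 of \cite{esfahani2018data} exactly as in the proof of Theorem~\ref{thm:VariationWasserstein}, obtaining the dual representation in Model~\eqref{model:WassersteinDual}: $\psi_l$ equals the infimum of $\epsilon_l\gamma_l + \frac{1}{K_l}\sum_{k=1}^{K_l} w_{lk}$ subject to Constraints~\eqref{eq:Wasserstein-Conjugate}--\eqref{eq:Wasserstein-DualNorm} in terms of $\boldsymbol{z}_{lk},\boldsymbol{\nu}_{lk},w_{lk},\gamma_l$. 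The only subtlety relative to the variation-distance case is the direction of the relaxation: in Model~\eqref{model:ChiDistance} the terms $r_l$ appear with a negative sign and the conic constraints are monotone in $\psi_l$, so I would verify that replacing the equality $\psi_l = \max(\cdots)$ by the inequality $\epsilon_l\gamma_l + \frac{1}{K_l}\sum_{k=1}^{K_l} w_{lk} \le \psi_l$ (an upper bound on the worst-case expectation, together with the epigraph variables) is valid at optimality—i.e.\ that at an optimal solution the inequality is tight or can be made tight without increasing the objective. Concretely, since increasing $\psi_l$ only tightens the conic constraint $\sqrt{r_l^2 + \tfrac14(\psi_l-\eta)^2} \le \lambda - \tfrac12(\psi_l-\eta)$ and increasing $\psi_l$ cannot help the objective (which is nonincreasing in nothing through $\psi_l$ directly but constrains $r_l$), the substitution is lossless.

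Then I would merge the two sets of variables and constraints: the outer-layer variables $\boldsymbol{y},\lambda,\eta,r_l,\psi_l$ from Model~\eqref{model:ChiDistance} and the inner-layer Wasserstein dual variables $\boldsymbol{z}_{lk},\boldsymbol{\nu}_{lk},w_{lk},\gamma_l$, linked by the single coupling inequality $\epsilon_l\gamma_l + \frac{1}{K_l}\sum_{k=1}^{K_l} w_{lk} \le \psi_l$ for each $l$. This directly yields the claimed reformulation. I would also note that the conjugate $[-h_y]^*$ and support function $\sigma_{\Xi_l}$ are exactly as defined in the statement of Theorem~\ref{thm:VariationWasserstein}, so no new definitions are needed.

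The main obstacle, such as it is, is the bookkeeping around whether the equality-to-inequality substitution for $\psi_l$ preserves optimality in the $\chi^2$ setting; unlike the variation-distance Model~\eqref{model:variation-distance}, where $r_l \ge \psi_l - \eta$ makes the monotonicity transparent, here one must check the second-order cone constraint and the objective term $-2\sum_l \hat p_l(\boldsymbol{y}) r_l$ jointly. I expect this to be a short monotonicity argument rather than a genuine difficulty: since $\hat p_l(\boldsymbol{y}) \ge 0$, the objective prefers $r_l$ large, but $r_l$ is bounded above through the cone constraint by a decreasing function of $\psi_l$, so the optimizer drives $\psi_l$ down to its lower bound $\epsilon_l\gamma_l + \frac{1}{K_l}\sum_k w_{lk}$, which by Model~\eqref{model:WassersteinDual} is attainable. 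Everything else is a direct concatenation of Theorem~\ref{thm:DRO-ChiDistance} and the Wasserstein duality result, mirroring the proof of Theorem~\ref{thm:VariationWasserstein} verbatim.
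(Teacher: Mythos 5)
Your proposal is correct and follows essentially the same route as the paper, which proves the theorem by combining the Wasserstein dual representation of $\psi_l$ (Model~\eqref{model:WassersteinDual}, via Theorem~4.2 of \cite{esfahani2018data}) with the $\chi^2$-distance reformulation of Theorem~\ref{thm:DRO-ChiDistance}. The only difference is that you spell out the monotonicity argument justifying the relaxation of $\psi_l=\max_{\mathbb{P}_l\in\mathcal{U}_l(\boldsymbol{y})}\mathbb{E}_{\mathbb{P}_l}[h(\boldsymbol{y},\boldsymbol{\xi})]$ to the epigraph inequality $\epsilon_l\gamma_l+\frac{1}{K_l}\sum_k w_{lk}\le\psi_l$, a detail the paper leaves implicit.
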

\begin{proof}
    Combining Model \eqref{model:WassersteinDual} with Theorem \ref{thm:DRO-ChiDistance} yields the desired result.
\end{proof}

\begin{theorem}[$\chi^2$-Distance + Wasserstein-based + Objective Uncertainty]
\label{thm:ChiSquareWasserstein_Obj}
    Suppose $\boldsymbol{T}(\boldsymbol{y})=0$, $\Xi_l=\{\boldsymbol{\xi}:\ \boldsymbol{C}_l\boldsymbol{\xi}\le \boldsymbol{d}_l\}$, and for any given $\boldsymbol{y}\in\mathcal{Y}$, the feasible region $\{\boldsymbol{x}:
    W\boldsymbol{x}\ge \boldsymbol{R}(\boldsymbol{y})\}$ is non-empty and compact. Then the two-stage multimodal ${\rm D^3RO}$ model \eqref{model:DRO} with $\chi^2$-distance set $\Delta(\hat{p}(\boldsymbol{y}))$ defined in \eqref{eq:modeDist-ChiSquare} and Wasserstein ambiguity set $\mathcal{U}_l(\boldsymbol{y})$ defined in \eqref{eq:distance-based} can be tractable for any $q\in[1,\infty]$ and admits the following equivalent formulation:
    \begin{subequations}\label{model:ChiSquare+Wasserstein}
    \begin{align}
          \min_{\boldsymbol{y},\lambda,\eta,\boldsymbol{\psi}} \quad &\boldsymbol{c}^{\mathsf T}\boldsymbol{y}+\eta+\rho\lambda+2\lambda-2\sum_{l=1}^L\hat{p}_l(\boldsymbol{y})r_l\\
        \text{s.t.}\quad & \boldsymbol{y}\in\mathcal{Y},\ \lambda,\ \boldsymbol{\mu}_{lk}\ge 0,\ \forall k=1,\ldots,K_l,\ l=1,\ldots,L\\
        & \sqrt{r_l^2+\frac{1}{4}(\psi_l-\eta)^2}\le \lambda-\frac{1}{2}(\psi_l-\eta),\ \forall l=1,\ldots, L\\
        & \psi_l-\eta\le \lambda,\ \forall l=1,\ldots, L\\
        & \epsilon_l\gamma_l+\frac{1}{K_l}\sum_{k=1}^{K_l}w_{lk}\le \psi_l,\ \forall l=1,\ldots, L\\
          &(\boldsymbol{Q}\hat{\boldsymbol{\xi}}_{lk}(\boldsymbol{y})+\boldsymbol{q})^{\mathsf T}\boldsymbol{x}_{lk}+(\boldsymbol{d}_l-\boldsymbol{C}_l\hat{\boldsymbol{\xi}}_{lk}(\boldsymbol{y}))^{\mathsf T}\boldsymbol{\mu}_{lk}\le w_{lk},\ \forall k=1,\ldots,K_l,\ l=1,\ldots,L\\
    &\boldsymbol{W}\boldsymbol{x}_{lk}\ge \boldsymbol{R}(\boldsymbol{y}),\ \forall k=1,\ldots,K_l,\ l=1,\ldots,L\\
        & ||\boldsymbol{Q}^{\mathsf T}\boldsymbol{x}_{lk}-\boldsymbol{C}_l^{\mathsf T}\boldsymbol{\mu}_{lk}||_{q^*}\le \gamma_l,\ \forall k=1,\ldots,K_l,\ l=1,\ldots,L,
    \end{align}
    \end{subequations}
    where $\frac{1}{q}+\frac{1}{q^*}=1$.
\end{theorem}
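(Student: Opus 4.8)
The plan is to reduce the statement to the generic $\chi^2$-distance plus Wasserstein reformulation already established in Theorem~\ref{thm:ChiDistance+Wasserstein}, and then to carry out, essentially verbatim, the conjugate/support-function simplification used in the proof of Theorem~\ref{thm:VariationWasserstein_Obj}. The only structural difference between Theorem~\ref{thm:VariationWasserstein_Obj} and the present one is whether the outer mode-probability layer is reformulated through Theorem~\ref{thm:DRO-VariationDistance} or through Theorem~\ref{thm:DRO-ChiDistance}, and that layer is entirely unaffected by the objective-uncertainty assumption $\boldsymbol{T}(\boldsymbol{y})=0$; so the work lies solely in simplifying the Wasserstein dual constraints.

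First I would invoke Theorem~\ref{thm:ChiDistance+Wasserstein}, which expresses the model with the worst-case mode expectations $\psi_l$ through the Wasserstein dual \eqref{model:WassersteinDual}, i.e.\ $\psi_l\ge\epsilon_l\gamma_l+\frac{1}{K_l}\sum_{k=1}^{K_l}w_{lk}$ together with the conjugate constraint involving $[-h_y]^*$ and the support function $\sigma_{\Xi_l}$, and the dual-norm constraint $\|\boldsymbol{z}_{lk}\|_{q^*}\le\gamma_l$, while the $\chi^2$ layer contributes the second-order cone constraints $\sqrt{r_l^2+\tfrac14(\psi_l-\eta)^2}\le\lambda-\tfrac12(\psi_l-\eta)$ and $\psi_l-\eta\le\lambda$. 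It then remains to simplify the conjugate constraint under $\boldsymbol{T}(\boldsymbol{y})=0$ and a polyhedral support $\Xi_l=\{\boldsymbol{\xi}:\boldsymbol{C}_l\boldsymbol{\xi}\le\boldsymbol{d}_l\}$, which I would do exactly as in the proof of Theorem~\ref{thm:VariationWasserstein_Obj}: since $\boldsymbol{T}(\boldsymbol{y})=0$, the map $h_y(\boldsymbol{\xi})=\min\{(\boldsymbol{Q}\boldsymbol{\xi}+\boldsymbol{q})^{\mathsf T}\boldsymbol{x}:W\boldsymbol{x}\ge\boldsymbol{R}(\boldsymbol{y})\}$ is a parametric LP whose feasible set does not depend on $\boldsymbol{\xi}$ and, by hypothesis, is nonempty and compact; interchanging $\sup_{\boldsymbol{\xi}}$ and $\inf_{\boldsymbol{x}}$ shows $[-h_y]^*(\boldsymbol{z}_{lk}-\boldsymbol{\nu}_{lk})$ equals $\boldsymbol{q}^{\mathsf T}\boldsymbol{x}_{lk}$ when some feasible $\boldsymbol{x}_{lk}$ satisfies $\boldsymbol{z}_{lk}-\boldsymbol{\nu}_{lk}=-\boldsymbol{Q}^{\mathsf T}\boldsymbol{x}_{lk}$ and $+\infty$ otherwise, while strong LP duality gives $\sigma_{\Xi_l}(\boldsymbol{\nu}_{lk})=\min\{\boldsymbol{d}_l^{\mathsf T}\boldsymbol{\mu}_{lk}:\boldsymbol{C}_l^{\mathsf T}\boldsymbol{\mu}_{lk}=\boldsymbol{\nu}_{lk},\ \boldsymbol{\mu}_{lk}\ge0\}$. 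Substituting both, eliminating $\boldsymbol{\nu}_{lk}=\boldsymbol{C}_l^{\mathsf T}\boldsymbol{\mu}_{lk}$ and hence $\boldsymbol{z}_{lk}=\boldsymbol{C}_l^{\mathsf T}\boldsymbol{\mu}_{lk}-\boldsymbol{Q}^{\mathsf T}\boldsymbol{x}_{lk}$, turns the conjugate constraint into $(\boldsymbol{Q}\hat{\boldsymbol{\xi}}_{lk}(\boldsymbol{y})+\boldsymbol{q})^{\mathsf T}\boldsymbol{x}_{lk}+(\boldsymbol{d}_l-\boldsymbol{C}_l\hat{\boldsymbol{\xi}}_{lk}(\boldsymbol{y}))^{\mathsf T}\boldsymbol{\mu}_{lk}\le w_{lk}$ with $W\boldsymbol{x}_{lk}\ge\boldsymbol{R}(\boldsymbol{y})$, $\boldsymbol{\mu}_{lk}\ge0$, and turns the dual-norm constraint into $\|\boldsymbol{Q}^{\mathsf T}\boldsymbol{x}_{lk}-\boldsymbol{C}_l^{\mathsf T}\boldsymbol{\mu}_{lk}\|_{q^*}\le\gamma_l$. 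Plugging these into the formulation of Theorem~\ref{thm:ChiDistance+Wasserstein} yields Model~\eqref{model:ChiSquare+Wasserstein}, and tractability for any $q\in[1,\infty]$ follows because the semi-infinite constraints have been replaced by finitely many linear and $q^*$-norm constraints, with the $\chi^2$ layer adding only SOC constraints.

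I expect the only genuinely delicate point to be the same one as in Theorem~\ref{thm:VariationWasserstein_Obj}: justifying the interchange of $\sup_{\boldsymbol{\xi}}$ and $\inf_{\boldsymbol{x}}$ in the conjugate and the finiteness (hence well-posedness) of $[-h_y]^*$, both of which rely on the nonemptiness and compactness of $\{\boldsymbol{x}:W\boldsymbol{x}\ge\boldsymbol{R}(\boldsymbol{y})\}$, together with strong LP duality for $\sigma_{\Xi_l}$; everything else is bookkeeping. I would also remark that, as in the variation-distance counterpart, the terms $\hat{\boldsymbol{\xi}}_{lk}(\boldsymbol{y})^{\mathsf T}\boldsymbol{x}_{lk}$ in the analogue of Constraint~\eqref{eq:Variation+Wasserstein-Bilinear} are bilinear when $\hat{\boldsymbol{\xi}}_{lk}(\boldsymbol{y})$ genuinely depends on $\boldsymbol{y}$, so ``tractable'' is to be understood in the DRO sense of having eliminated the inner infinite-dimensional maximization, these bilinearities being handled separately (e.g.\ via McCormick envelopes when $\boldsymbol{y}$ is binary).
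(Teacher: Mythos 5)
Your proposal is correct and follows the paper's own route: the paper proves this theorem by combining the conjugate/support-function simplification already derived under $\boldsymbol{T}(\boldsymbol{y})=0$ and polyhedral support in the proof of Theorem~\ref{thm:VariationWasserstein_Obj} (i.e., Constraints~\eqref{eq:Wasserstein-h-conjugate}) with the generic $\chi^2$-distance plus Wasserstein reformulation of Theorem~\ref{thm:ChiDistance+Wasserstein}, which is exactly your reduction. Your added remarks on the sup--inf interchange, strong LP duality for $\sigma_{\Xi_l}$, and the residual bilinearity in $\hat{\boldsymbol{\xi}}_{lk}(\boldsymbol{y})^{\mathsf T}\boldsymbol{x}_{lk}$ are consistent with the paper's treatment.
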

\begin{proof}
    Combining Constraints \eqref{eq:Wasserstein-h-conjugate} with Theorem \ref{thm:ChiDistance+Wasserstein} yields the desired result.
\end{proof}

In Theorems \ref{thm:VariationWasserstein}- \ref{thm:ChiSquareWasserstein_Obj}, reformulations are presented over generic reference distributions with decision-dependent realizations. To present computationally tractable reformulations, we provide special cases in Section \ref{sec:SpecialCases-DistanceBased}, where the reformulations for the objective uncertainty setting under the variation distance based multimodal ambiguity setting can result in a MILP formulation and the reformulation under the $\chi^2$-distance based multimodal ambiguity setting can result in a MISOCP formulation, under certain assumptions. 


\subsubsection{Special Cases}
\label{sec:SpecialCases-DistanceBased}

As a special case, we consider the setting where  the uncertainty realization affinely depends on the first-stage decision variable, i.e.,
\begin{align*}
& \hat{\xi}_{l,k,n}(\boldsymbol{y})=\bar{\xi}_{l,k,n}+\sum_{i=1}^I\lambda^{\xi}_{l,k,n,i}{y}_i,\ \forall l=1,\ldots, L,\ k=1,\ldots,K_l,\ n=1,\ldots, N
\end{align*}
Under this setting, Constraints \eqref{eq:Variation+Wasserstein-Bilinear} become
\begin{align*}  &\boldsymbol{q}^{\mathsf T}\boldsymbol{x}_{lk}+\sum_{j=1}^Jx_{lkj}\sum_{n=1}^NQ_{jn}\bar{\xi}_{l,k,n}+\sum_{j=1}^J\sum_{n=1}^NQ_{jn}\sum_{i=1}^I\lambda^{\xi}_{l,k,n,i}x_{lkj}y_i\\
&+\boldsymbol{d}_l^{\mathsf T}\boldsymbol{\mu}_{lk}-\sum_{h=1}^H\mu_{lkh}\sum_{n=1}^NC_{lhn}\bar{\xi}_{l,k,n}-\sum_{h=1}^H\sum_{n=1}^NC_{lhn}\sum_{i=1}^I\lambda^{\xi}_{l,k,n,i}\mu_{lkh}y_i\le w_{lk},\ \forall k=1,\ldots,K_l,\ l=1,\ldots, L
\end{align*}
Given binary valued first-stage decisions $y_i$, we can provide exact reformulations for bilinear terms $x_{lkj}y_i$ and $\mu_{lkh}y_i$ using McCormick envelopes. 
Consequently, we can extend our results in Theorems \ref{thm:VariationWasserstein_Obj} and \ref{thm:ChiSquareWasserstein_Obj} under this setting to obtain efficient reformulations. 
We remind the readers that in Section \ref{sec:mode}, we will discuss different approaches to model the decision-dependent mode probabilities $\hat{p}(\boldsymbol{y})$ and present tractable reformulations for the aforementioned results under certain cases.

\section{Value of Multimodality}
\label{sec:MultimodalAnalyticalResults}

In Models \eqref{model:variation+moment-general}, \eqref{model:ChiSquared+Moment}, \eqref{model:Variation+Wasserstein-Objective} and \eqref{model:ChiSquare+Wasserstein}, each constraint is repeated for $L$ times corresponding to each different mode, which is much more challenging to solve compared to a traditional single-modal model, which corresponds to the case of $L=1$. Then, to evaluate the value of the proposed approach and compare multimodal and single-modal ambiguity sets, we aim to answer the following questions: Is it worth solving this computationally expensive multimodal model? What is the additional benefit of multimodal framework compared to the single-modal one? Indeed, as we will show in Remark \ref{remark:DD-SAA}, if we are using an SAA approach and have a precise mode probability ($\rho=0$), then there is no difference between a multimodal stochastic program and a single-modal one that combines the information from multiple modes. However, if we only have partial information on the distribution in each mode and construct moment-based or distance-based ambiguity sets for DRO models (as we illustrated in Sections \ref{sec:moment} and \ref{sec:distance}), then we can show that our multimodal model can always obtain an in-sample cost that is at least as good as the model that fuses the information from different modes to a single distribution. In this section, we consider the variation distance set \eqref{eq:modeDist-VariationDistance} as $\Delta(\hat{\boldsymbol{p}}(\boldsymbol{y}))$ and illustrate the benefit of considering a multimodal DRO model using both moment-based and distance-based ambiguity sets in Theorems \ref{thm:benefit-moment} and \ref{thm:benefit-distance}, respectively. 
\begin{remark}\label{remark:DD-SAA}
    We first consider a multimodal DD-SAA setting when $\rho=0$: $\min_{\boldsymbol{y}\in\mathcal{Y}}\{\boldsymbol{c}^{\mathsf T}\boldsymbol{y}+\sum_{l=1}^L\frac{\hat{p}_l(\boldsymbol{y})}{K_l}\sum_{k=1}^{K_l}h(\boldsymbol{y},\hat{\boldsymbol{\xi}}_{lk}(\boldsymbol{y}))\}$, where we use the empirical distribution supported on $K_l$ data samples ($\frac{1}{K_l}\sum_{k=1}^{K_l}\delta_{\hat{\boldsymbol{\xi}}_{lk}(\boldsymbol{y})}$) to approximate the true distribution $\mathbb{P}_l$ in each mode $l$. Note that this multimodal setting can be equivalently recast as a single-modal setting where we group all the data samples from different modes into one mode, i.e., having $\sum_{l=1}^LK_l$ scenarios $\{\hat{\boldsymbol{\xi}}_{lk}(\boldsymbol{y})\}_{k=1,\ldots, K_l,\ l=1,\ldots,L}$, each with probability $\frac{\hat{p}_l(\boldsymbol{y})}{K_l}$. We provide an illustration on a toy example in Figure \ref{fig:SP}.
\begin{figure}[ht!]
    \centering
    \includegraphics[width=0.6\textwidth]{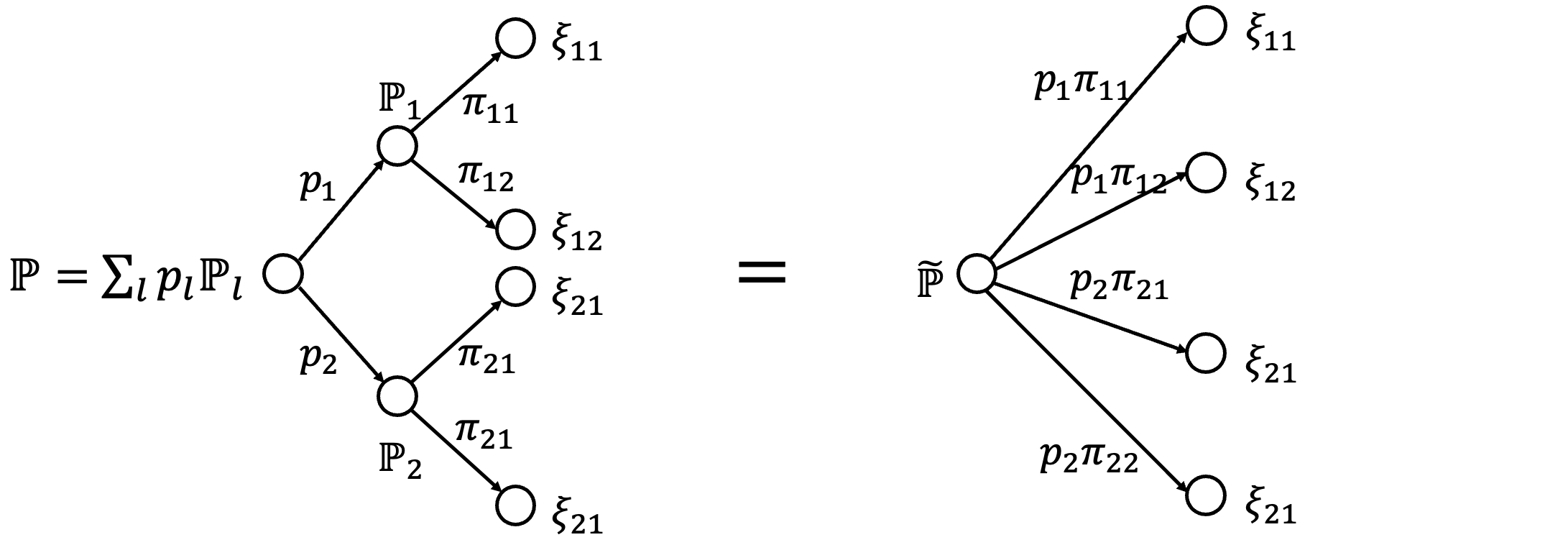}
    \caption{The equivalence between multimodal SAA (left) and single-modal SAA (right).}
    \label{fig:SP}
\end{figure}
\end{remark}

For deriving the value of multimodality under the moment-based ambiguity setting, we first define the multimodal decision-dependent ambiguity set $\Theta_M(\boldsymbol{y})$ by considering the ambiguity set \eqref{eq:genericMultiModalAmbiguity} under the variation distance set \eqref{eq:modeDist-VariationDistance} for $\Delta(\hat{\boldsymbol{p}}(\boldsymbol{y}))$ and moment-based set \eqref{eq:moment-based} for $\mathcal{U}_l(\boldsymbol{y})$. 
On the other hand, one alternative to this set $\Theta_M(\boldsymbol{y})$ is to group the moment bounds under $L$ different modes to a single moment bound and construct the following single-modal moment-based ${\rm D^3RO}$ while leveraging the ambiguity around the mode probabilities using the definition of the variation distance. We define this alternative problem setting as follows:
\begin{align}
   (\mbox{\bf{Single-Modal Moment-based }} {\rm D^3RO}): \min_{\boldsymbol{y}\in\mathcal{Y}} \boldsymbol{c}^{\mathsf T} \boldsymbol{y}+\max_{\mathbb{P} \in \Theta_M^{\prime}(\boldsymbol{y})}\mathbb{E}_{\boldsymbol{\xi}\sim \mathbb{P}}[h(\boldsymbol{y},\boldsymbol{\xi})] \label{model:DRO-singlemodalMoment}
\end{align}
where
{\color{black}
\begin{align}
    \Theta_M^{\prime}(\boldsymbol{y})=&\Big\{\mathbb{P} \in \mathcal{P}(\Xi): \int_{\Xi} f_{m} (\boldsymbol{\xi}) \mathbb{P}(d\boldsymbol{\xi}) \in \left[\sum_{l=1}^L(\hat{p}_l(\boldsymbol{y})-\rho)\underline{u}_{l,m}(\boldsymbol{y}), \sum_{l=1}^L(\hat{p}_l(\boldsymbol{y})+\rho)\bar{u}_{l,m}(\boldsymbol{y})\right],\nonumber\\
    &\hspace{10.5cm} m \in \{1, \cdots, M\} \Big\}.\label{eq:moment-based-grouped}
\end{align}
}
{\color{black}
    Here, for presentation purposes, we assume  $\bar{\boldsymbol{u}}_l(\boldsymbol{y})\ge\underline{\boldsymbol{u}}_l(\boldsymbol{y})\ge 0$ for every feasible decision $\boldsymbol{y}\in\mathcal{Y}$ and mode $l=1,\ldots,L$ to ensure that $\sum_{l=1}^L(\hat{p}_l(\boldsymbol{y})+\rho)\bar{u}_{l,m}(\boldsymbol{y})\ge \sum_{l=1}^L(\hat{p}_l(\boldsymbol{y})-\rho)\underline{u}_{l,m}(\boldsymbol{y})$. We note that this assumption is not restrictive. Indeed, if $\boldsymbol{\xi}\ge 0$ (e.g., when we model the customer demand as the uncertainty), a non-trivial lower or upper bound on the moment function $\boldsymbol{f}(\boldsymbol{\xi})$ should always be non-negative. 
    If this assumption is not satisfied (e.g., $\underline{\boldsymbol{u}}_l(\boldsymbol{y})\le\bar{\boldsymbol{u}}_l(\boldsymbol{y})\le 0$),  then we can revise the bounds in \eqref{eq:moment-based-grouped} to be $[\sum_{l=1}^L(\hat{p}_l(\boldsymbol{y})+\rho)\underline{u}_{l,m}(\boldsymbol{y}), \sum_{l=1}^L(\hat{p}_l(\boldsymbol{y})-\rho)\bar{u}_{l,m}(\boldsymbol{y})]$ and derive the corresponding results.
    }

Note that when $\rho=0$, the lower and upper bound in \eqref{eq:moment-based-grouped} become a convex combination of the lower and upper bounds ($\underline{\boldsymbol{u}}_l(\boldsymbol{y})$, $\bar{\boldsymbol{u}}_l(\boldsymbol{y})$) in each mode defined in \eqref{eq:moment-based}, using the mode probability {\color{black}estimate} $\hat{p}_l(\boldsymbol{y})$ as the weight. {\color{black} When we are not confident about the mode probability estimates (i.e., $\rho\not=0$), we can adjust the weights to take into account the ambiguity around $\hat{p}_l(\boldsymbol{y})$ by subtracting and adding $\rho$ for the lower and upper bound, respectively.
}
We show that the multimodal moment-based ${\rm D^3RO}$ \eqref{model:DRO} under the ambiguity set $\Theta_M(\boldsymbol{y})$  always results in an in-sample cost that is at least as good as its single-modal counterpart \eqref{model:DRO-singlemodalMoment} in the next theorem. 


\begin{theorem}\label{thm:benefit-moment}
    If we use the Variation distance set \eqref{eq:modeDist-VariationDistance} as $\Delta(\hat{\boldsymbol{p}}(\boldsymbol{y}))$ {\color{black}and moment-based set \eqref{eq:moment-based} as $\mathcal{U}_l(\boldsymbol{y})$ and assume $\bar{\boldsymbol{u}}_l(\boldsymbol{y}),\underline{\boldsymbol{u}}_l(\boldsymbol{y})\ge 0$ for every feasible decision $\boldsymbol{y}\in\mathcal{Y}$ and mode $l=1,\ldots,L$}, then the multimodal ambiguity set $\Theta_M(\boldsymbol{y})$ is nested in the single-modal set $\Theta_M^{\prime}(\boldsymbol{y})$, i.e., $\Theta_M(\boldsymbol{y})\subseteq\Theta_M^{\prime}(\boldsymbol{y})$. As a result, the optimal objective value of the multimodal moment-based ${\rm D^3RO}$ \eqref{model:DRO} is no more than the one of the single-modal counterpart \eqref{model:DRO-singlemodalMoment}, i.e.,
    \begin{align} \min_{\boldsymbol{y}\in\mathcal{Y}} \boldsymbol{c}^{\mathsf T} \boldsymbol{y}+\max_{P \in \Theta_M(\boldsymbol{y})}\mathbb{E}_{P_\xi}[h(\boldsymbol{y},\boldsymbol{\xi})]\le \min_{\boldsymbol{y}\in\mathcal{Y}} \boldsymbol{c}^{\mathsf T} \boldsymbol{y}+\max_{P \in \Theta_M^{\prime}(\boldsymbol{y})}\mathbb{E}_{P_\xi}[h(\boldsymbol{y},\boldsymbol{\xi})].\label{eq:MultimodalComparison-Moment}
    \end{align}
\end{theorem}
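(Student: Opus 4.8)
The plan is to reduce everything to the single set inclusion $\Theta_M(\boldsymbol{y})\subseteq\Theta_M'(\boldsymbol{y})$, which I would establish for every fixed $\boldsymbol{y}\in\mathcal{Y}$. Once this is shown, \eqref{eq:MultimodalComparison-Moment} is immediate: enlarging the feasible set of a maximization problem cannot decrease its value, so $\max_{\mathbb{P}\in\Theta_M(\boldsymbol{y})}\mathbb{E}_{\mathbb{P}}[h(\boldsymbol{y},\boldsymbol{\xi})]\le\max_{\mathbb{P}\in\Theta_M'(\boldsymbol{y})}\mathbb{E}_{\mathbb{P}}[h(\boldsymbol{y},\boldsymbol{\xi})]$; adding $\boldsymbol{c}^{\mathsf T}\boldsymbol{y}$ to both sides and taking the minimum over $\boldsymbol{y}\in\mathcal{Y}$ preserves the inequality. (If $\Theta_M(\boldsymbol{y})$ is empty for some $\boldsymbol{y}$ the comparison is vacuous there, so no nonemptiness hypothesis is needed.)

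To prove the inclusion, I would fix $\boldsymbol{y}$, take an arbitrary $\mathbb{P}\in\Theta_M(\boldsymbol{y})$, and use \eqref{eq:genericMultiModalAmbiguity} to write $\mathbb{P}=\sum_{l=1}^Lp_l\mathbb{P}_l$ with $\boldsymbol{p}$ feasible for the variation distance set \eqref{eq:modeDist-VariationDistance} --- so $p_l\ge 0$, $\sum_lp_l=1$, and $\sum_l|p_l-\hat p_l(\boldsymbol{y})|\le\rho$ --- and with $\mathbb{P}_l\in\mathcal{M}(\underline{\boldsymbol{u}}_l(\boldsymbol{y}),\bar{\boldsymbol{u}}_l(\boldsymbol{y}))$ for each $l$. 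Linearity of the integral for a finite mixture gives $\int_{\Xi}f_m(\boldsymbol{\xi})\mathbb{P}(d\boldsymbol{\xi})=\sum_{l=1}^Lp_l\int_{\Xi_l}f_m(\boldsymbol{\xi})\mathbb{P}_l(d\boldsymbol{\xi})$ for every $m$; since $p_l\ge 0$ and each inner integral lies in $[\underline u_{l,m}(\boldsymbol{y}),\bar u_{l,m}(\boldsymbol{y})]$, we obtain $\int_{\Xi}f_m\,d\mathbb{P}\in\big[\sum_lp_l\underline u_{l,m}(\boldsymbol{y}),\,\sum_lp_l\bar u_{l,m}(\boldsymbol{y})\big]$.

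It then remains to control $\sum_lp_l\bar u_{l,m}(\boldsymbol{y})$ and $\sum_lp_l\underline u_{l,m}(\boldsymbol{y})$ by the grouped bounds in \eqref{eq:moment-based-grouped}, and this is the only place where the hypothesis $\underline{\boldsymbol{u}}_l(\boldsymbol{y}),\bar{\boldsymbol{u}}_l(\boldsymbol{y})\ge 0$ enters --- which is exactly why the theorem states it. For the upper bound I would split $\sum_lp_l\bar u_{l,m}=\sum_l\hat p_l(\boldsymbol{y})\bar u_{l,m}+\sum_l(p_l-\hat p_l(\boldsymbol{y}))\bar u_{l,m}$ and bound the perturbation term by $\sum_l|p_l-\hat p_l(\boldsymbol{y})|\,\bar u_{l,m}\le\big(\max_l\bar u_{l,m}\big)\sum_l|p_l-\hat p_l(\boldsymbol{y})|\le\rho\max_l\bar u_{l,m}\le\rho\sum_l\bar u_{l,m}$, where the final step uses $\bar u_{l,m}\ge 0$; the lower bound is the mirror image with the signs reversed and $\underline u_{l,m}\ge 0$. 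Hence $\int_{\Xi}f_m\,d\mathbb{P}\in\big[\sum_l(\hat p_l(\boldsymbol{y})-\rho)\underline u_{l,m}(\boldsymbol{y}),\,\sum_l(\hat p_l(\boldsymbol{y})+\rho)\bar u_{l,m}(\boldsymbol{y})\big]$ for all $m$, i.e., $\mathbb{P}\in\Theta_M'(\boldsymbol{y})$, completing the inclusion. The main (and essentially only) obstacle is this sign bookkeeping: without nonnegativity one cannot pass from $\sum_l|p_l-\hat p_l(\boldsymbol{y})|\,\bar u_{l,m}$ to $\rho\sum_l\bar u_{l,m}$, and the grouped bounds in \eqref{eq:moment-based-grouped} would have to be reorganized as in the remark preceding the theorem, with the perturbation estimate adjusted accordingly; everything else is routine linearity and monotonicity.
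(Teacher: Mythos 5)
Your proposal is correct and follows essentially the same route as the paper: reduce everything to the inclusion $\Theta_M(\boldsymbol{y})\subseteq\Theta_M'(\boldsymbol{y})$ via the mixture decomposition $\mathbb{P}=\sum_{l}p_l\mathbb{P}_l$, bound $\int_{\Xi}f_m\,d\mathbb{P}$ by $\sum_l p_l\underline u_{l,m}$ and $\sum_l p_l\bar u_{l,m}$, and then pass to the grouped bounds using nonnegativity of the moment bounds. The only (immaterial) difference is your intermediate estimate --- you split off the perturbation $\sum_l(p_l-\hat p_l(\boldsymbol{y}))\bar u_{l,m}$ and use $\rho\max_l\bar u_{l,m}\le\rho\sum_l\bar u_{l,m}$, whereas the paper notes $|p_l-\hat p_l(\boldsymbol{y})|\le\rho$ modewise and bounds each term $p_l\bar u_{l,m}\le(\hat p_l(\boldsymbol{y})+\rho)\bar u_{l,m}$ directly --- both yielding the identical grouped bound.
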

\begin{proof}
    For any $\mathbb{P}=\sum_{l=1}^L p_l\mathbb{P}_{l}\in\Theta_M(\boldsymbol{y})$, we have {\color{black}$\sum_{l=1}^Lp_l=1$}, $|p_l-\hat{p}_l(\boldsymbol{y})|\le\sum_{l=1}^L|p_l-\hat{p}_l(\boldsymbol{y})|\le \rho$ and $\mathbb{P}_l\in\mathcal{M}(\underline{\boldsymbol{u}}_l(\boldsymbol{y}),\bar{\boldsymbol{u}}_l(\boldsymbol{y}))$. {\color{black}First of all, $\mathbb{P}\in\mathcal{P}(\Xi)$ because $\int_{\Xi}\mathbb{P}(d\boldsymbol{\xi})=\sum_{l=1}^Lp_l\int_{\Xi_l}\mathbb{P}_l(d\boldsymbol{\xi})=\sum_{l=1}^Lp_l=1$. Furthermore,
    \begin{align*}
         &\int_{\Xi} f_{m} (\boldsymbol{\xi}) \mathbb{P}(d\boldsymbol{\xi})=\sum_{l=1}^Lp_l\int_{\Xi_l} f_{m} (\boldsymbol{\xi}) \mathbb{P}_l(d\boldsymbol{\xi})\le \sum_{l=1}^L p_l\bar{u}_{l,m}(\boldsymbol{y})\le \sum_{l=1}^L (\hat{p}_l(\boldsymbol{y})+\rho)\bar{u}_{l,m}(\boldsymbol{y}),\ \forall m\in\{1,\ldots,M\}\\
        &\int_{\Xi} f_{m} (\boldsymbol{\xi}) \mathbb{P}(d\boldsymbol{\xi})=\sum_{l=1}^Lp_l\int_{\Xi_l} f_{m} (\boldsymbol{\xi}) \mathbb{P}_l(d\boldsymbol{\xi})\ge \sum_{l=1}^L p_l\underline{u}_{l,m}(\boldsymbol{y})\ge \sum_{l=1}^L (\hat{p}_l(\boldsymbol{y})-\rho)\underline{u}_{l,m}(\boldsymbol{y}),\ \forall m\in\{1,\ldots,M\},
    \end{align*}
    }
    which implies that $\mathbb{P}=\sum_{l=1}^L p_l\mathbb{P}_{l}\in\Theta_M^{\prime}(\boldsymbol{y})$. Since $\Theta_M(\boldsymbol{y})\subseteq\Theta_M^{\prime}(\boldsymbol{y})$, we have $\max_{\mathbb{P}\in \Theta_M^{\prime}(\boldsymbol{y})}\mathbb{E}_{\mathbb{P}}[h(\boldsymbol{y},\boldsymbol{\xi})]\ge \max_{\mathbb{P}\in \Theta_M(\boldsymbol{y})}\mathbb{E}_{\mathbb{P}}[h(\boldsymbol{y},\boldsymbol{\xi})]$.
\end{proof}

\begin{remark}
    The equality in \eqref{eq:MultimodalComparison-Moment} holds when $\underline{\boldsymbol{u}}_l(\boldsymbol{y})=\underline{\boldsymbol{u}}(\boldsymbol{y}),\ \bar{\boldsymbol{u}}_l(\boldsymbol{y})=\bar{\boldsymbol{u}}(\boldsymbol{y}),\ \forall l=1,\ldots, L$ and $\rho=0$. Indeed, for any $\mathbb{P}\in \Theta_M^{\prime}(\boldsymbol{y})$, if $\underline{\boldsymbol{u}}_l(\boldsymbol{y})=\underline{\boldsymbol{u}}(\boldsymbol{y}),\ \bar{\boldsymbol{u}}_l(\boldsymbol{y})=\bar{\boldsymbol{u}}(\boldsymbol{y}),\ \forall l=1,\ldots, L$ and $\rho=0$, then $\mathbb{P}\in \mathcal{M}(\underline{\boldsymbol{u}}_l(\boldsymbol{y}),\bar{\boldsymbol{u}}_l(\boldsymbol{y}))$ for all $l=1,\ldots,L$. As a result, $\mathbb{P}=\sum_{l=1}^L\hat{p}_l(\boldsymbol{y})\mathbb{P}\in\Theta_M(\boldsymbol{y})$.
\end{remark}

For deriving the value of multimodality under the Wasserstein distance-based ambiguity setting, we define the multimodal decision-dependent ambiguity set $\Theta_D(\boldsymbol{y})$ by considering the ambiguity set \eqref{eq:genericMultiModalAmbiguity} under the variation distance set \eqref{eq:modeDist-VariationDistance} for $\Delta(\hat{\boldsymbol{p}}(\boldsymbol{y}))$ and Wasserstein distance-based set \eqref{eq:distance-based} for $\mathcal{U}_l(\boldsymbol{y})$. 
For Wasserstein-based ambiguity set, one natural alternative of ambiguity set to the set $\Theta_D(\boldsymbol{y})$ 
is to group the empirical distributions under $L$ different modes to a single representative empirical distribution $\sum_{l=1}^L\hat{p}_l(\boldsymbol{y})\hat{\mathbb{P}}_l(\boldsymbol{y})$ and to find one worst-case distribution in the Wasserstein ambiguity set while considering the ambiguity in mode distributions. To this end, we first construct the following single-modal distance-based ${\rm D^3RO}$ as follows:
\begin{align}
   (\mbox{\bf{Single-Modal Distance-based }} {\rm D^3RO}): \min_{\boldsymbol{y}\in\mathcal{Y}} \boldsymbol{c}^{\mathsf T} \boldsymbol{y}+\max_{\mathbb{P} \in \Theta_D^{\prime}(\boldsymbol{y})}\mathbb{E}_{\boldsymbol{\xi}\sim \mathbb{P}}[h(\boldsymbol{y},\boldsymbol{\xi})] \label{model:DRO-singlemodalDistance}
\end{align}
where
\begin{align}
    \Theta_D^{\prime}(\boldsymbol{y})=\left\{\mathbb{P}\in\mathcal{P}(\Xi):\mathcal{W}_q\left(\mathbb{P}, \sum_{l=1}^L\hat{p}_l(\boldsymbol{y})\hat{\mathbb{P}}_l(\boldsymbol{y})\right)\le \sum_{l=1}^L \hat{p}_l(\boldsymbol{y})\epsilon_l + \rho\cdot\text{diam}({\color{black}\Xi})\right\}\label{eq:distance-based-grouped}
\end{align}
with $\text{diam}(\Xi):=\sup\{||x-y||_q: x,y\in\Xi\}$. 
{\color{black}Note that when $\rho=0$, the radius reduces to a convex combination of the radii from each mode, using the mode probability estimate $\hat{p}_l(\boldsymbol{y})$ as the weight. However, when we are not confident about the mode probability estimates $\hat{\boldsymbol{p}}(\boldsymbol{y})$ (i.e., $\rho\not=0$), we need to enlarge the radius to take into account the ambiguity around it, which is assumed to be proportional to $\rho$. We show that the multimodal distance-based ${\rm D^3RO}$ \eqref{model:DRO} under the ambiguity set $\Theta_D(\boldsymbol{y})$ always results in an in-sample cost that is at least as good as its single-modal counterpart \eqref{model:DRO-singlemodalDistance} in the next theorem.}


\begin{theorem}\label{thm:benefit-distance}
   If we use the Variation distance set \eqref{eq:modeDist-VariationDistance} as $\Delta(\hat{\boldsymbol{p}}(\boldsymbol{y}))$ {\color{black}and Wasserstein distance-based set \eqref{eq:distance-based} as $\mathcal{U}_l(\boldsymbol{y})$}, then the multimodal ambiguity set $\Theta_D(\boldsymbol{y})$ is nested in the single-modal set $\Theta_D^{\prime}(\boldsymbol{y})$, i.e., $\Theta_D(\boldsymbol{y})\subseteq\Theta_D^{\prime}(\boldsymbol{y})$. As a result, the optimal objective value of the multimodal moment-based ${\rm D^3RO}$ \eqref{model:DRO} is no more than the one of the single-modal counterpart \eqref{model:DRO-singlemodalDistance}, i.e.,
    \begin{align}
      \min_{\boldsymbol{y}\in\mathcal{Y}} \boldsymbol{c}^{\mathsf T} \boldsymbol{y}+\max_{\mathbb{P} \in \Theta_D(\boldsymbol{y})}\mathbb{E}_{\mathbb{P}_\xi}[h(\boldsymbol{y},\boldsymbol{\xi})]\le \min_{\boldsymbol{y}\in\mathcal{Y}} \boldsymbol{c}^{\mathsf T} \boldsymbol{y}+\max_{\mathbb{P} \in \Theta_D^{\prime}(\boldsymbol{y})}\mathbb{E}_{\mathbb{P}_\xi}[h(\boldsymbol{y},\boldsymbol{\xi})].\label{eq:MultimodalComparison-Distance}
    \end{align}
\end{theorem}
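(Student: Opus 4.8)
The plan is to reduce the theorem to the set inclusion $\Theta_D(\boldsymbol{y})\subseteq\Theta_D'(\boldsymbol{y})$ for every $\boldsymbol{y}\in\mathcal{Y}$, exactly as in the proof of Theorem~\ref{thm:benefit-moment}: once this is established, inclusion of the feasible regions of the inner maximization gives $\max_{\mathbb{P}\in\Theta_D(\boldsymbol{y})}\mathbb{E}_{\mathbb{P}}[h(\boldsymbol{y},\boldsymbol{\xi})]\le\max_{\mathbb{P}\in\Theta_D'(\boldsymbol{y})}\mathbb{E}_{\mathbb{P}}[h(\boldsymbol{y},\boldsymbol{\xi})]$ pointwise in $\boldsymbol{y}$, and adding $\boldsymbol{c}^{\mathsf T}\boldsymbol{y}$ and minimizing over the common feasible set $\mathcal{Y}$ preserves~\eqref{eq:MultimodalComparison-Distance}.

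To prove the inclusion, I would fix $\mathbb{P}=\sum_{l=1}^L p_l\mathbb{P}_l\in\Theta_D(\boldsymbol{y})$, where $\boldsymbol{p}$ lies in the variation-distance set \eqref{eq:modeDist-VariationDistance} (so $p_l\ge0$, $\sum_l p_l=1$, $\sum_l|p_l-\hat p_l(\boldsymbol{y})|\le\rho$) and $\mathcal{W}_q(\mathbb{P}_l,\hat{\mathbb{P}}_l(\boldsymbol{y}))\le\epsilon_l$ for each $l$. Since each $\mathbb{P}_l$ is supported on $\Xi_l\subseteq\Xi$ and $\sum_l p_l=1$, the mixture $\mathbb{P}$ belongs to $\mathcal{P}(\Xi)$, so the only thing left to verify is the Wasserstein radius. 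Introducing the auxiliary probability measure $\bar{\mathbb{P}}:=\sum_l\hat p_l(\boldsymbol{y})\mathbb{P}_l$ (a valid measure in $\mathcal{P}(\Xi)$ because $\sum_l\hat p_l(\boldsymbol{y})=1$) and applying the triangle inequality for $\mathcal{W}_q$, it suffices to bound $\mathcal{W}_q(\mathbb{P},\bar{\mathbb{P}})$ and $\mathcal{W}_q(\bar{\mathbb{P}},\sum_l\hat p_l(\boldsymbol{y})\hat{\mathbb{P}}_l(\boldsymbol{y}))$ separately. For the second term I would use the convexity of $\mathcal{W}_q$ under a fixed mixing weight: gluing, for each $l$, an optimal coupling of $(\mathbb{P}_l,\hat{\mathbb{P}}_l(\boldsymbol{y}))$ scaled by $\hat p_l(\boldsymbol{y})$ produces a coupling of $(\bar{\mathbb{P}},\sum_l\hat p_l(\boldsymbol{y})\hat{\mathbb{P}}_l(\boldsymbol{y}))$ with cost $\sum_l\hat p_l(\boldsymbol{y})\mathcal{W}_q(\mathbb{P}_l,\hat{\mathbb{P}}_l(\boldsymbol{y}))\le\sum_l\hat p_l(\boldsymbol{y})\epsilon_l$. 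For the first term I would exhibit an explicit transport plan between the two mixtures of the \emph{same} components with different weights: writing $p_l=m_l+r_l$ and $\hat p_l(\boldsymbol{y})=m_l+s_l$ with $m_l:=\min\{p_l,\hat p_l(\boldsymbol{y})\}$, one has $r_l,s_l\ge0$ and, because both weight vectors sum to $1$, $\sum_l r_l=\sum_l s_l=\tfrac12\sum_l|p_l-\hat p_l(\boldsymbol{y})|\le\rho/2$; leaving the common mass $\sum_l m_l\mathbb{P}_l$ in place at zero cost and coupling the residual masses $\sum_l r_l\mathbb{P}_l$ and $\sum_l s_l\mathbb{P}_l$ in an arbitrary way costs at most $\mathrm{diam}(\Xi)$ per unit of moved mass, since all support points lie in $\Xi$; hence $\mathcal{W}_q(\mathbb{P},\bar{\mathbb{P}})\le\tfrac12\big(\sum_l|p_l-\hat p_l(\boldsymbol{y})|\big)\mathrm{diam}(\Xi)\le\rho\,\mathrm{diam}(\Xi)$. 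Adding the two estimates yields $\mathcal{W}_q(\mathbb{P},\sum_l\hat p_l(\boldsymbol{y})\hat{\mathbb{P}}_l(\boldsymbol{y}))\le\sum_l\hat p_l(\boldsymbol{y})\epsilon_l+\rho\,\mathrm{diam}(\Xi)$, i.e.\ $\mathbb{P}\in\Theta_D'(\boldsymbol{y})$, which is the claimed inclusion.

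I expect the main obstacle to be the careful bookkeeping in the two Wasserstein estimates rather than anything conceptually deep. The convexity/gluing step needs a standard disintegration argument to check that pasting the per-mode optimal couplings really yields a joint measure with the prescribed marginals $\bar{\mathbb{P}}$ and $\sum_l\hat p_l(\boldsymbol{y})\hat{\mathbb{P}}_l(\boldsymbol{y})$; the residual-mass step needs the precise accounting that only $\tfrac12\sum_l|p_l-\hat p_l(\boldsymbol{y})|$ units of mass must move, and that $\mathrm{diam}(\Xi)$ is a legitimate uniform bound on the transport cost, which relies on $\Xi=\cup_{l}\Xi_l$ so that every pair of support points is within distance $\mathrm{diam}(\Xi)$. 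Existence of the optimal couplings and measurability are routine here, since the reference measures $\hat{\mathbb{P}}_l(\boldsymbol{y})$ are finitely supported and $\Xi$ is compact; for completeness one may also record, in parallel with the remark following Theorem~\ref{thm:benefit-moment}, that the inclusion becomes an equality when $\hat{\mathbb{P}}_l(\boldsymbol{y})$ and $\epsilon_l$ are common across all modes and $\rho=0$.
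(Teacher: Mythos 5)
Your proof is correct, and its skeleton is the same as the paper's: reduce to the inclusion $\Theta_D(\boldsymbol{y})\subseteq\Theta_D^{\prime}(\boldsymbol{y})$, verify $\mathbb{P}\in\mathcal{P}(\Xi)$, insert the intermediate measure $\bar{\mathbb{P}}=\sum_{l}\hat{p}_l(\boldsymbol{y})\mathbb{P}_l$, apply the triangle inequality for $\mathcal{W}_q$, and bound the term $\mathcal{W}_q\bigl(\bar{\mathbb{P}},\sum_{l}\hat{p}_l(\boldsymbol{y})\hat{\mathbb{P}}_l(\boldsymbol{y})\bigr)$ by $\sum_{l}\hat{p}_l(\boldsymbol{y})\epsilon_l$ via exactly the same mixture-of-optimal-couplings (gluing) argument, using that each $\pi_l$ is supported on $\Xi_l^2$. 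The one place you genuinely diverge is the bound on $\mathcal{W}_q(\mathbb{P},\bar{\mathbb{P}})$: the paper passes through the total variation distance and invokes the inequality $\mathcal{W}_q\le \mathrm{diam}(\Xi)\cdot\mathrm{TV}$ (Theorem 4 of \cite{gibbs2002choosing}), then bounds the TV term by $\rho$; you instead build an explicit transport plan that leaves the common mass $\sum_l\min\{p_l,\hat{p}_l(\boldsymbol{y})\}\mathbb{P}_l$ in place and moves only the residual mass $\tfrac12\sum_l|p_l-\hat{p}_l(\boldsymbol{y})|\le\rho/2$, at cost at most $\mathrm{diam}(\Xi)$ per unit. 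Your route is self-contained (it is, in effect, a direct proof of the cited Wasserstein--TV inequality in this mixture setting) and yields the slightly sharper intermediate bound $\tfrac{\rho}{2}\mathrm{diam}(\Xi)$, which you then relax to $\rho\,\mathrm{diam}(\Xi)$ to match the radius in \eqref{eq:distance-based-grouped}; the paper's route is shorter because it delegates that step to a known result. Either way the conclusion $\mathbb{P}\in\Theta_D^{\prime}(\boldsymbol{y})$ and the resulting comparison of optimal values follow identically, and your closing remark on when equality holds mirrors Remark \ref{remark:DRO-SAA}.
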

\begin{proof}
For any $\mathbb{P}=\sum_{l=1}^Lp_l\mathbb{P}_l\in\Theta_D(\boldsymbol{y})$, we have {\color{black}$\sum_{l=1}^Lp_l=1$}, $\sum_{l=1}^L|p_l-\hat{p}_l(\boldsymbol{y})|\le \rho$, $\mathbb{P}_l\in\mathcal{P}(\Xi_l)$ and $\mathcal{W}_q(\mathbb{P}_l, \hat{\mathbb{P}}_l(\boldsymbol{y}))\le \epsilon_l$. {\color{black}First of all, $\mathbb{P}\in\mathcal{P}(\Xi)$ because $\int_{\Xi}\mathbb{P}(d\boldsymbol{\xi})=\sum_{l=1}^Lp_l\int_{\Xi_l}\mathbb{P}_l(d\boldsymbol{\xi})=\sum_{l=1}^Lp_l=1$.
Let $\Xi^2=\Xi \times \Xi$ and }
$$\pi_l=\arg\inf \left\{\int_{\Xi_l^2}||\xi_1-\xi_2||_q\Pi(d\xi_1,d\xi_2): \substack{\Pi\text{ is a joint distribution of $\xi_1$ and $\xi_2$}\\ \text{with marginals $\mathbb{P}_l$ and $\hat{\mathbb{P}}_l(\boldsymbol{y})$}}\right\}.$$ 
Then $\sum_{l=1}^L\hat{p}_l(\boldsymbol{y})\pi_l\in\mathcal{P}(\Xi^2)$ has marginals $\sum_{l=1}^L\hat{p}_l(\boldsymbol{y})\mathbb{P}_l$ and $\sum_{l=1}^L\hat{p}_l(\boldsymbol{y})\hat{\mathbb{P}}_l(\boldsymbol{y})$. Thus,
\begin{align*}
    \mathcal{W}_q\left(\sum_{l=1}^L \hat{p}_l(\boldsymbol{y})\mathbb{P}_l, \sum_{l=1}^L\hat{p}_l(\boldsymbol{y})\hat{\mathbb{P}}_l(\boldsymbol{y})\right)\overset{(a)}{\le}&\int_{\Xi^2}||\xi_1-\xi_2||_q\left(\sum_{l=1}^L\hat{p}_l(\boldsymbol{y})\pi_l\right)(d\xi_1,d\xi_2)\\
    =&\sum_{l=1}^L \hat{p}_l(\boldsymbol{y})\int_{\Xi^2}||\xi_1-\xi_2||_q\pi_l(d\xi_1,d\xi_2)\\
    \overset{(b)}{=}&\sum_{l=1}^L \hat{p}_l(\boldsymbol{y})\mathcal{W}_q(\mathbb{P}_l, \hat{\mathbb{P}}_l(\boldsymbol{y}))\\
    \le &\sum_{l=1}^L \hat{p}_l(\boldsymbol{y})\epsilon_l,
\end{align*}
where $(a)$ is true because $\sum_{l=1}^L\hat{p}_l(\boldsymbol{y})\pi_l$ is a feasible transportation plan to move mass from $\sum_{l=1}^L\hat{p}_l(\boldsymbol{y})\mathbb{P}_l$ to $\sum_{l=1}^L\hat{p}_l(\boldsymbol{y})\hat{\mathbb{P}}_l(\boldsymbol{y})$;  $(b)$ is true because $\pi_l$ is supported on $\Xi_l^2$.
On the other hand, denoting $\text{TV}(\mathbb{P},\mathbb{Q})$ as the total variation distance between distribution $\mathbb{P}$ and $\mathbb{Q}$, we have the following relationship
\begin{align*}
    \mathcal{W}_q\left(\sum_{l=1}^L p_l\mathbb{P}_l, \sum_{l=1}^L\hat{p}_l(\boldsymbol{y})\mathbb{P}_l\right)
    \overset{(a)}{\le} & \text{diam}(\Xi)\text{TV}\left(\sum_{l=1}^L p_l\mathbb{P}_l, \sum_{l=1}^L\hat{p}_l(\boldsymbol{y})\mathbb{P}_l\right)\\
    = & \text{diam}(\Xi)\sup_{A\subset\Xi}\left|\int_{A}\sum_{l=1}^L p_l\mathbb{P}_l(d\xi)-\int_{A}\sum_{l=1}^L\hat{p}_l(\boldsymbol{y})\mathbb{P}_l(d\xi)\right|\\
    = & \text{diam}(\Xi)\sup_{A\subset\Xi}\left|\int_{A}\sum_{l=1}^L (p_l-\hat{p}_l(\boldsymbol{y}))\mathbb{P}_l(d\xi)\right|\\
    \le & \text{diam}(\Xi)\sup_{A\subset\Xi}\int_{A}\sum_{l=1}^L |p_l-\hat{p}_l(\boldsymbol{y})|\mathbb{P}_l(d\xi)\\
    = & \text{diam}(\Xi)\sup_{A\subset\Xi}\sum_{l=1}^L |p_l-\hat{p}_l(\boldsymbol{y})|\int_{A}\mathbb{P}_l(d\xi)\\
    \overset{(b)}{\le} & \rho\cdot\text{diam}(\Xi)
\end{align*}
where (a) is due to Theorem 4 in \cite{gibbs2002choosing} and $(b)$ is because the supremum is achieved at $A=\Xi$.
As a result, 
\begin{align*}
\mathcal{W}_q\left(\sum_{l=1}^L p_l\mathbb{P}_l,  \sum_{l=1}^L\hat{p}_l(\boldsymbol{y})\hat{\mathbb{P}}_l(\boldsymbol{y})\right)\le &\mathcal{W}_q\left(\sum_{l=1}^L p_l\mathbb{P}_l, \sum_{l=1}^L\hat{p}_l(\boldsymbol{y})\mathbb{P}_l\right)+\mathcal{W}_q\left(\sum_{l=1}^L \hat{p}_l(\boldsymbol{y})\mathbb{P}_l, \sum_{l=1}^L\hat{p}_l(\boldsymbol{y})\hat{\mathbb{P}}_l(\boldsymbol{y})\right)\\
\le &\sum_{l=1}^L \hat{p}_l(\boldsymbol{y})\epsilon_l+\rho\cdot\text{diam}(\Xi)
\end{align*}
which implies that $\mathbb{P}=\sum_{l=1}^Lp_l\mathbb{P}_l\in\Theta^{\prime}_D(\boldsymbol{y})$ and $\Theta_D(\boldsymbol{y})\subseteq\Theta_D^{\prime}(\boldsymbol{y})$. {\color{black}Thus, we have $\max_{\mathbb{P}\in \Theta_D^{\prime}(\boldsymbol{y})}\mathbb{E}_{\mathbb{P}}[h(\boldsymbol{y},\boldsymbol{\xi})]\ge \max_{\mathbb{P}\in \Theta_D(\boldsymbol{y})}\mathbb{E}_{\mathbb{P}}[h(\boldsymbol{y},\boldsymbol{\xi})]$.}
\end{proof}
\begin{remark}\label{remark:DRO-SAA}
    The equality in \eqref{eq:MultimodalComparison-Distance} holds when $\hat{\mathbb{P}}_l(\boldsymbol{y})=\hat{\mathbb{P}}(\boldsymbol{y}),\ \epsilon_l=\epsilon,\ \forall l=1,\ldots, L$ and $\rho=0$. Indeed, for any $\mathbb{P}\in \Theta_D^{\prime}(\boldsymbol{y})$, if $\hat{\mathbb{P}}_l(\boldsymbol{y})=\hat{\mathbb{P}}(\boldsymbol{y}),\ \epsilon_l=\epsilon,\ \forall l=1,\ldots, L$ and $\rho=0$, then $\mathcal{W}_q(\mathbb{P}, \hat{\mathbb{P}}_l(\boldsymbol{y}))=\mathcal{W}_q(\mathbb{P}, \sum_{l=1}^L\hat{p}_l(\boldsymbol{y})\hat{\mathbb{P}}_l(\boldsymbol{y}))\le \epsilon_l$ for all $l=1,\ldots, L$. As a result, $\mathbb{P}=\sum_{l=1}^L\hat{p}_l(\boldsymbol{y})\mathbb{P}\in \Theta_D(\boldsymbol{y})$.
\end{remark}

\section{Mode Probabilities}
\label{sec:mode}

The reformulations presented in Sections \ref{sec:moment} and \ref{sec:distance} are for generic problem settings that can lead to non-linear, non-convex optimization problems. In this section, we present tractable reformulations based on different forms of decision-dependence in the nominal mode probabilities.
Specifically, we discuss three possible ways to describe the decision-dependence in the nominal mode probabilities that can be applicable to various application settings and derive tractable reformulations under certain cases.
\subsection{Affine Dependence}
\label{sec:ModeAffineDependence}

We first consider the case when the mode probability has an affine dependence on the first-stage decision variable $\boldsymbol{y}$, i.e.,
\begin{equation} \label{eq:ModeAffineDependence}
\hat{{p}}_l(\boldsymbol{y})=\bar{p}_l+(\boldsymbol{\lambda}_l^p)^{\mathsf T}\boldsymbol{y},\ \forall l=1,\ldots, L,
\end{equation}
where $\bar{p}_l$ is the base probability associated with mode $l$, and parameter  $\lambda_{l,i}^p \in\mathbb{R}$ corresponds to the degree about how {\color{black}one unit change in $y_i$} may affect the probability distribution of mode $l$.
This mode function in \eqref{eq:ModeAffineDependence} represents the case when investments in certain technologies or activities can increase the probabilities of some modes while reducing the probabilities of the other modes. 
In this setting, we assume that $\sum_{l=1}^L\bar{p}_l=1$, $\sum_{l=1}^L\boldsymbol{\lambda}_l^p=0$ and $\hat{p}_l(\boldsymbol{y})\ge 0$ to ensure that the nominal mode probability $\hat{\boldsymbol{p}}(\boldsymbol{y})$ lies in a probability simplex for any possible $\boldsymbol{y}$. An alternative approach is to assume $\bar{p}_l=0,\ \forall l=1,\ldots, L,\ \sum_{l=1}^L\boldsymbol{\lambda}_l^p=1$ and $\sum_{i=1}^I y_i=1$. This reduces to the convex combination of distributions discussed in Section 4.1.2 of \cite{hellemo2018decision}.

When the first-stage decision $\boldsymbol{y}$ is continuous, the resulting formulations obtained in Section \ref{sec:MultimodalReformulations} are non-convex due to the existence of bilinear terms. In this case, one can use some off-the-shelf non-convex optimization solvers to solve the resulting problems directly. On the other hand, if the first-stage decision $\boldsymbol{y}$ is binary valued, we can use McCormick envelopes to exactly reformulate the bilinear terms as linear constraints. Hence by assuming the first-stage decisions $\boldsymbol{y}$ to be binary and using the affine function \eqref{eq:ModeAffineDependence} {\color{black}under finite support},
we provide MILP reformulations for two-stage multimodal ${\rm D^3RO}$ model \eqref{model:DRO} under variation distance based  $\Delta(\hat{p}(\boldsymbol{y}))$ set with moment-based and distance-based ambiguity sets in Appendix \ref{sec:AppendixSpecialCaseReformulations}. 
These reformulations are further leveraged in our computational study in {\color{black}Section \ref{sec:Computations-FacilityLocation} to illustrate our findings on a facility location problem with binary first-stage decision $\boldsymbol{y}$.} 
We note that using this analogy, MISOCP reformulations can be obtained under $\chi^2$-distance based $\Delta(\hat{p}(\boldsymbol{y}))$ set with moment-based and distance-based ambiguity sets. 

\subsection{Linear Scaling}\label{sec:linear-scaling}

Next, we consider the case where the first-stage decision variable can scale the mode probability linearly \cite{hellemo2018decision}. Let $\bar{p}_l$ be the nominal mode probability in each mode $l$ such that $\sum_{l=1}^L\bar{p}_l=1$ and for illustration, let $y\in \mathcal{Y} \subseteq \mathbb{R}_{+}$ be a one-dimensional decision variable. For certain modes $l\in\hat{L} \subset \{1,\cdots,L\}$, assume that variable $y$ scales the probability linearly, while the probability for the remaining modes are adjusted accordingly, i.e., 
\begin{align*}
    \hat{p}_l(y)=\begin{cases}
        \bar{p}_ly,\ \forall l\in\hat{L},\\
        \frac{1-y\sum_{l\in\hat{L}}\bar{p}_l}{\sum_{l\in[L]\setminus\hat{L}}\bar{p}_l}\bar{p}_l,\ \forall l\in [L]\setminus\hat{L}
    \end{cases}
\end{align*}

In this setting, we assume that $\bar{p}_ly \in [0,1]$ for every mode $l\in \hat{L}$ and $\frac{1-y\sum_{l\in\hat{L}}\bar{p}_l}{\sum_{l\in[L]\setminus\hat{L}}\bar{p}_l}\bar{p}_l \in [0,1]$ for every mode $l\in [L]\setminus\hat{L}$ while assuming $\hat{\boldsymbol{p}}(\boldsymbol{y})$ to lie in a probability simplex under every $y \in \mathcal{Y}$.
When the first-stage decision $y$ represents an investment decision, as in the example setting presented in Section \ref{sec:ModeAffineDependence}, then such decisions can scale the likelihood of certain modes, which can be captured through linear scaling with increased and decreased mode probabilities. 
{\color{black}We utilize this mode function in Section \ref{sec:Computations-Shipment} to illustrate our findings on a shipment planning problem with pricing, where the first-stage pricing decision $y$ is continuous.}


\subsection{Binary Interdiction}

Additionally, motivated by \cite{noyan2022distributionally}, we consider a setting to present mode probabilities, which can be relevant with various network reliability and interdiction related optimization problems including disaster planning problems. Under this setting, for a given network with $I$ links, the goal is to determine which links to reinforce to prevent random failures associated with these links. 
These decisions can be captured through binary valued $y_i$ decisions for each link $i \in [I]$, where each link $i$ has a baseline survival probability of $\sigma_i^0\in[0,1]$, which will be increased to $\sigma_i^1\in[\sigma_i^0,1]$ if the link is reinforced ($y_i=1$). 
Under each mode $l$, the failure state of the system can be described via a binary vector $\zeta_{l}$ of length $I$, whose $i^{th}$ component takes value of 1 if and only if link $i$ survives. Depending on the system state, there are $L=2^I$ different modes, where $\zeta_{li}=1$ if link $i$ survives under mode $l$, and 0 otherwise. 
Furthermore, under each mode $l$ describing system status, we may have different information on the uncertain parameter (e.g., demand) to be considered within the ambiguity sets associated with these modes. 
Then, we propose the following probability function for each mode $l$ dependent to the first-stage decisions $\boldsymbol{y}$ as follows:
\begin{align*}
\hat{p}_l(\boldsymbol{y})=\prod_{i\in I: \zeta_{li}=1}[(1-y_i)\sigma_i^0+y_i\sigma_i^1)]\prod_{i\in I: \zeta_{li}=0}[(1-y_i)(1-\sigma_i^0)+y_i(1-\sigma_i^1)],
\end{align*}
which involves multi-linear terms of the decision variables.  
To reformulate this mode probability function, 
following the distribution shaping technique developed in \cite{laumanns2014distribution}, we can rewrite $\hat{p}_l(\boldsymbol{y})$ as $\pi_{li}$ with the following linear constraints:
\begin{align*}
    &\pi_{li}\le \frac{\sigma_i^1}{\sigma_i^0}\pi_{l,i-1}+1-y_i,\ \forall i\in [I],\ l\in [L]: \zeta_{li}=1\\
    &\pi_{li}\le \frac{1-\sigma_i^1}{1-\sigma_i^0}\pi_{l,i-1}+1-y_i,\ \forall i\in [I],\ l\in [L]: \zeta_{li}=0\\
    &\pi_{li}\le \pi_{l,i-1}+y_i,\ \forall i\in [I],\ l\in [L]\\
    &\sum_{l=1}^L \pi_{li}=1,\ \forall i\in I\\
    & \boldsymbol{\pi}\in [0,1]^{I\times L}
\end{align*}
Since $\pi_{li}$ is a continuous decision variable, this will give rise to non-convex bilinear terms in the reformulations, which need to be solved via non-convex optimization solvers.

{\color{black}
\section{Solution Algorithm}
\label{sec:SolutionAlg}
In this section, we present a separation-based decomposition algorithm for solving a class of multimodal decision-dependent DRO reformulations proposed in this paper. We illustrate our algorithm over the variation distance based multimodal ambiguity with moment-based setting provided in formulation \eqref{model:variation+moment-general}, and consider the case where the lower and upper bounds on the moment functions $\underline{\boldsymbol{u}}_l(\boldsymbol{y}), \bar{\boldsymbol{u}}_l(\boldsymbol{y})$ and reference mode distribution $\hat{p}_l(\boldsymbol{y})$ are affinely dependent to $\boldsymbol{y}$. An example formulation for this setting can be found in Section \ref{sec:SpecialCases-MomentBased} under finite support assumption. Note that, our solution algorithm can handle continuous support sets, where we consider $\Xi_l$ being a polyhedral non-empty set for each mode $l = 1,\ldots,L$ in the remainder of this section. Furthermore, the algorithmic steps can be extended for solving alternative reformulations including the variation distance based multimodal ambiguity with Wasserstein-based setting provided in Section~\ref{sec:VariationDistanceWasserstein}, by designing a similar separation-based approach.  
For formulation \eqref{model:variation+moment-general}, we first observe that constraint \eqref{eq:MomentBasedVariation-SupportConstr} can be represented as follows:
\begin{equation} \label{eq:SeparationIneqaulity_v1}
\alpha_l \ge \max_{\xi \in \Xi_l} \left\{h(\boldsymbol{y},\boldsymbol{\xi}) - (\bar{\boldsymbol{\beta}}_l-\underline{\boldsymbol{\beta}}_l)^{\mathsf T}\boldsymbol{f}(\boldsymbol{\xi}) \right\},\ \forall l=1,\ldots, L. 
\end{equation}
Here, the maximization problem within this constraint involves a max-min problem that is not readily solvable in its current problem. To this end, by considering Assumption \ref{assump:complete-recourse}, we note that the inner problem $h(\boldsymbol{y},\boldsymbol{\xi})$ presented in \eqref{eq:second-stage} is a feasible linear program under every feasible first-stage decision $\boldsymbol{y} \in \mathcal{Y}$ and every realization of $\boldsymbol{\xi} \in \Xi_l$. Thus, the dual of $h(\boldsymbol{y},\boldsymbol{\xi})$ can be written as follows:
\begin{subequations}\label{eq:second-stage-dual}
\begin{align}
\max_{\boldsymbol{\omega}\in\mathbb{R}^P}\quad& (\boldsymbol{R}(\boldsymbol{y})-\boldsymbol{T}(\boldsymbol{y})\boldsymbol{\xi})^{\mathsf T} \boldsymbol{\omega} \\
\text{s.t.}\quad & W^{\mathsf T} \boldsymbol{\omega} = \boldsymbol{Q}\boldsymbol{\xi}+\boldsymbol{q} \label{eq:second-stage-dual-constr1} \\
& \boldsymbol{\omega} \geq 0.
\end{align}
\end{subequations}
Let $\bar{\Omega} = \{\boldsymbol{\omega}\in\mathbb{R}_{+}^P: \eqref{eq:second-stage-dual-constr1} \}$ be the feasible region of this dual problem \eqref{eq:second-stage-dual}. Then, the inequality \eqref{eq:SeparationIneqaulity_v1} can be written as 
\begin{equation} 
\label{eq:SeparationConstraint}
\alpha_l \ge \max_{\xi \in \Xi_l, \boldsymbol{\omega} \in \bar{\Omega}}  \left\{(\boldsymbol{R}(\boldsymbol{y})-\boldsymbol{T}(\boldsymbol{y})\boldsymbol{\xi})^{\mathsf T} \boldsymbol{\omega} - (\bar{\boldsymbol{\beta}}_l-\underline{\boldsymbol{\beta}}_l)^{\mathsf T}\boldsymbol{f}(\boldsymbol{\xi}) \right\},\ \forall l=1,\ldots, L. 
\end{equation}
We note that, under given $(\boldsymbol{y},\underline{\boldsymbol{\beta}}_l,\bar{\boldsymbol{\beta}}_l)$ values, the maximization problem in \eqref{eq:SeparationConstraint} can include bilinear terms in its objective function depending on the uncertainty in $h(\boldsymbol{y},\boldsymbol{\xi})$ and moment information $\boldsymbol{f}(\boldsymbol{\xi})$ utilized to construct the ambiguity set. 
In such cases, these bilinear terms can be linearized using various techniques, and alternative reformulations of this maximization problem can be obtained by the addition of new decision variables and constraints such as McCormick constraints, while keeping a polyhedron feasible region. These reformulations can be exact depending on the special characteristics of the underlying problem and support set. To this end, to represent the corresponding separation problem in its linear programming form, 
for notational brevity, we consider the case where $\boldsymbol{T}(\boldsymbol{y}) = 0$ while first-moment information is utilized in the construction of each ambiguity set, such as the sets including support, mean, mean absolute deviation information, that are widely utilized in practice. 
Thus, under given $(\boldsymbol{y},\underline{\boldsymbol{\beta}}_l,\bar{\boldsymbol{\beta}}_l)$ values, we refer to the maximization problem in \eqref{eq:SeparationConstraint} under this setting as $\Psi_l(\boldsymbol{y},\underline{\boldsymbol{\beta}}_l,\bar{\boldsymbol{\beta}}_l)$ and reformulate problem \eqref{model:variation+moment-general} as follows:    
\begin{subequations}\label{model:variation+moment-general-Algorithm}
    \begin{align}   \min_{\boldsymbol{y},\lambda,\eta,\alpha_l,\underline{\boldsymbol{\beta}}_l,\bar{\boldsymbol{\beta}}_l}
     \quad &\boldsymbol{c}^{\mathsf T}\boldsymbol{y}+\eta+\rho\lambda+\sum_{l=1}^L\hat{p}_l(\boldsymbol{y})r_l\\
        \text{s.t.}\quad & \eqref{model:variation+moment-general-ConstrFirst} - \eqref{model:variation+moment-general-ConstrEnd} 
        \notag \\
        & \alpha_l \geq \Psi_l(\boldsymbol{y},\underline{\boldsymbol{\beta}}_l,\bar{\boldsymbol{\beta}}_l) = \max_{\xi \in \Xi_l, \boldsymbol{\omega} \in \bar{\Omega}}\left\{\boldsymbol{R}(\boldsymbol{y})^{\mathsf T}\boldsymbol{\omega}- (\bar{\boldsymbol{\beta}}_l-\underline{\boldsymbol{\beta}}_l)^{\mathsf T}\boldsymbol{f}(\boldsymbol{\xi}) \right\}, \ \forall l=1,\ldots, L. \label{eq:SeparationConstraint-LinearObj}
    \end{align}
    \end{subequations}
This reformulation helps us to present our decomposition algorithm by leveraging constraint \eqref{eq:SeparationConstraint-LinearObj} as part of our separation subroutine. To this end, we first make the following observation. 
\begin{lemma} \label{lem:Finiteness}
Under a feasible solution $\boldsymbol{y} \in \mathcal{Y}$, 
function $\Psi_l(\boldsymbol{y},\underline{\boldsymbol{\beta}}_l,\bar{\boldsymbol{\beta}}_l)$ is a convex piecewise linear function in $\boldsymbol{y},\underline{\boldsymbol{\beta}}_l,\bar{\boldsymbol{\beta}}_l$ with a finite number of pieces for every mode $l = 1, \ldots, L$. 
\end{lemma}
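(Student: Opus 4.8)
The plan is to show that $\Psi_l(\boldsymbol{y},\underline{\boldsymbol{\beta}}_l,\bar{\boldsymbol{\beta}}_l)$ is the optimal value of a linear program whose feasible region does not depend on $(\boldsymbol{y},\underline{\boldsymbol{\beta}}_l,\bar{\boldsymbol{\beta}}_l)$ and whose objective is linear in $(\boldsymbol{y},\underline{\boldsymbol{\beta}}_l,\bar{\boldsymbol{\beta}}_l)$ for each fixed choice of the decision variables of the inner problem, then invoke the standard representation of the value function of a parametric LP. Concretely, under the stated assumptions ($\boldsymbol{T}(\boldsymbol{y})=0$, first-moment information so that $\boldsymbol{f}(\boldsymbol{\xi})$ is affine in $\boldsymbol{\xi}$, $\Xi_l$ a nonempty polyhedron, and $\bar\Omega$ a nonempty polyhedron by Assumption~\ref{assump:complete-recourse}), the separation problem
\[
\Psi_l(\boldsymbol{y},\underline{\boldsymbol{\beta}}_l,\bar{\boldsymbol{\beta}}_l) = \max_{\boldsymbol{\xi}\in\Xi_l,\ \boldsymbol{\omega}\in\bar\Omega}\left\{\boldsymbol{R}(\boldsymbol{y})^{\mathsf T}\boldsymbol{\omega} - (\bar{\boldsymbol{\beta}}_l-\underline{\boldsymbol{\beta}}_l)^{\mathsf T}\boldsymbol{f}(\boldsymbol{\xi})\right\}
\]
is a maximization of a bilinear objective over a product of polyhedra. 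The first step is therefore to linearize: introduce McCormick-type variables for the bilinear cross terms (products of components of $\boldsymbol{\xi}$ with components of $\boldsymbol{\omega}$ arising from the term $-\boldsymbol{T}(\boldsymbol{y})\boldsymbol{\xi}$ — which vanishes here since $\boldsymbol{T}(\boldsymbol{y})=0$ — and any products in $\boldsymbol{f}(\boldsymbol{\xi})$, which are affine and hence contribute no cross terms) so that, after this step, the inner problem is a genuine linear program $\max\{\boldsymbol{d}(\boldsymbol{y},\underline{\boldsymbol{\beta}}_l,\bar{\boldsymbol{\beta}}_l)^{\mathsf T}\boldsymbol{v}: \boldsymbol{v}\in\mathcal{V}_l\}$ over a fixed polyhedron $\mathcal{V}_l$ (the product $\Xi_l\times\bar\Omega$, possibly with added linearization variables), whose cost vector $\boldsymbol{d}$ depends \emph{affinely} on $(\boldsymbol{y},\underline{\boldsymbol{\beta}}_l,\bar{\boldsymbol{\beta}}_l)$.

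The second step is to apply the classical decomposition of a polyhedron into its vertices and extreme rays (the Minkowski--Weyl theorem). Write $\mathcal{V}_l = \mathrm{conv}(\{\boldsymbol{v}^1,\dots,\boldsymbol{v}^{T_l}\}) + \mathrm{cone}(\{\boldsymbol{r}^1,\dots,\boldsymbol{r}^{G_l}\})$ with finitely many vertices $\boldsymbol{v}^t$ and extreme rays $\boldsymbol{r}^g$. For the maximum to be finite we need $\boldsymbol{d}(\boldsymbol{y},\underline{\boldsymbol{\beta}}_l,\bar{\boldsymbol{\beta}}_l)^{\mathsf T}\boldsymbol{r}^g\le 0$ for all $g$; on the (polyhedral) set of parameters where this holds the optimum is attained at a vertex, so
\[
\Psi_l(\boldsymbol{y},\underline{\boldsymbol{\beta}}_l,\bar{\boldsymbol{\beta}}_l) = \max_{t=1,\dots,T_l}\ \boldsymbol{d}(\boldsymbol{y},\underline{\boldsymbol{\beta}}_l,\bar{\boldsymbol{\beta}}_l)^{\mathsf T}\boldsymbol{v}^t,
\]
which is a pointwise maximum of finitely many functions, each affine in $(\boldsymbol{y},\underline{\boldsymbol{\beta}}_l,\bar{\boldsymbol{\beta}}_l)$ because $\boldsymbol{d}$ is affine in these parameters and $\boldsymbol{v}^t$ is a constant. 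A pointwise maximum of finitely many affine functions is convex and piecewise linear with finitely many pieces, which is exactly the claim. I would also note that for a feasible $\boldsymbol{y}\in\mathcal{Y}$ with $\underline{\boldsymbol{\beta}}_l,\bar{\boldsymbol{\beta}}_l$ chosen so that the corresponding ambiguity-set dual constraint is satisfiable, finiteness of $h(\boldsymbol{y},\boldsymbol{\xi})$ (assumed in the paper) and strong LP duality guarantee $\Psi_l$ is finite, so the ``ray'' obstruction does not actually bind at relevant parameter values; this can be folded into the statement or left as a remark.

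**The main obstacle** I anticipate is handling the bilinear terms cleanly in the general statement versus the reduced setting the paper actually uses. In full generality (with $\boldsymbol{T}(\boldsymbol{y})\ne 0$ or higher moments in $\boldsymbol{f}$) the inner objective is genuinely bilinear and the McCormick linearization is only exact under additional structure (e.g., binary components, or box supports with the bilinear variable paired against a simplex), so the value function need not be convex in $\boldsymbol{y}$; the lemma is stated for the reduced case where $\boldsymbol{T}(\boldsymbol{y})=0$ and $\boldsymbol{f}$ is affine, precisely so that the objective $\boldsymbol{R}(\boldsymbol{y})^{\mathsf T}\boldsymbol{\omega}-(\bar{\boldsymbol{\beta}}_l-\underline{\boldsymbol{\beta}}_l)^{\mathsf T}\boldsymbol{f}(\boldsymbol{\xi})$ is \emph{separable} — linear in $\boldsymbol{\omega}$ over $\bar\Omega$ and linear in $\boldsymbol{\xi}$ over $\Xi_l$ — and the maximization decouples into $\max_{\boldsymbol{\omega}\in\bar\Omega}\boldsymbol{R}(\boldsymbol{y})^{\mathsf T}\boldsymbol{\omega} + \max_{\boldsymbol{\xi}\in\Xi_l}\{-(\bar{\boldsymbol{\beta}}_l-\underline{\boldsymbol{\beta}}_l)^{\mathsf T}\boldsymbol{f}(\boldsymbol{\xi})\}$ with no cross terms at all. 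Recognizing and exploiting this separability is really the crux: once it is observed, each summand is a parametric LP value function handled by the vertex argument above, and the sum of two convex piecewise linear functions with finitely many pieces is again convex piecewise linear with finitely many pieces, completing the proof. I would make sure the write-up explicitly invokes $\boldsymbol{R}(\boldsymbol{y})$ being affine in $\boldsymbol{y}$ (stated after \eqref{eq:second-stage}) and $\boldsymbol{f}(\boldsymbol{\xi})$ being affine in $\boldsymbol{\xi}$ (the first-moment assumption) to justify the affine dependence of the per-vertex functions on $(\boldsymbol{y},\underline{\boldsymbol{\beta}}_l,\bar{\boldsymbol{\beta}}_l)$.
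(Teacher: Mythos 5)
Your proposal is correct and follows essentially the same route as the paper's proof: both identify $\Psi_l$ as the value of a parametric LP whose objective is affine in $(\boldsymbol{y},\underline{\boldsymbol{\beta}}_l,\bar{\boldsymbol{\beta}}_l)$ for each fixed point of $\Xi_l\times\bar{\Omega}$, so the optimum is a pointwise maximum over the finitely many extreme points of that polyhedral product, giving a convex piecewise linear function with finitely many pieces. Your additional observations (separability of the objective, the extreme-ray/boundedness caveat, and the vacuous McCormick step) are elaborations of the same argument rather than a different approach.
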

\begin{proof}
We observe that with Assumption \ref{assump:complete-recourse} and $\Xi_l$ being a polyhedral non-empty set, the corresponding optimization problem in $\Psi_l(\boldsymbol{y},\underline{\boldsymbol{\beta}}_l,\bar{\boldsymbol{\beta}}_l)$ is feasible and bounded. 
Then, for any given feasible solution $(\boldsymbol{y},\underline{\boldsymbol{\beta}}_l,\bar{\boldsymbol{\beta}}_l)$, 
$\max_{\xi \in \Xi_l, \boldsymbol{\omega} \in \bar{\Omega}} \{\boldsymbol{R}(\boldsymbol{y})^{\mathsf T}\boldsymbol{\omega}- (\bar{\boldsymbol{\beta}}_l-\underline{\boldsymbol{\beta}}_l)^{\mathsf T}\boldsymbol{f}(\boldsymbol{\xi})\}$  
corresponds to the maximum of linear functions of $\boldsymbol{y},\underline{\boldsymbol{\beta}}_l,\bar{\boldsymbol{\beta}}_l$, resulting in a convex piecewise linear function. 
As each piece of this piecewise linear function can be associated with the extreme points of the polyhedral sets described by $\Xi_l \times \bar{\Omega}$, 
which has finitely many extreme points, it demonstrates the desired result. 
\end{proof}

Combining above, the solution algorithm is presented in Algorithm \ref{alg:solutionAlg}. The algorithm starts by solving a master problem, which is a relaxation of \eqref{model:variation+moment-general-Algorithm} by removing constraint \eqref{eq:SeparationConstraint-LinearObj}. Then, once the optimal solution of this master problem is obtained, that is, $(\boldsymbol{y}^*,\lambda^*,\eta^*,\alpha_l^*,\underline{\boldsymbol{\beta}}_l^*,\bar{\boldsymbol{\beta}}_l^*)$, the separation problem characterized by $\Psi_l(\boldsymbol{y}^*,\underline{\boldsymbol{\beta}}_l^*,\bar{\boldsymbol{\beta}}_l^*)$ is solved for each mode $l = 1, \dots, L$. If the relaxed inequality in \eqref{eq:SeparationConstraint-LinearObj} is violated, then the corresponding constraints are added to the set of cuts represented by $\{\mathcal{L}_l(\boldsymbol{y}, \alpha_l) \geq 0\}$ to be considered in the subsequent iteration. 
The algorithm stops when no violation is identified for the relaxed constraint corresponding to each mode. 
The algorithm further provides lower and upper bounds in each iteration, which are obtained using the optimal objective function value of the master problem and by solving a variant of the original problem that leverages the output of the separation problem, respectively. 
These bounds help in evaluating the quality of the obtained solutions and present an additional convergence criterion.    

\begin{algorithm}[h]
\caption{{\color{black}Decomposition-based Solution Algorithm}}\label{alg:solutionAlg}
\begin{algorithmic}[1]
\State Initialize $\{\mathcal{L}_l(\boldsymbol{y}, \alpha_l) \geq 0\} = \emptyset$, $UB= \infty$, $LB=-\infty$. 
\State Solve the master problem \[
Z^* = \min_{\boldsymbol{y},\lambda,\eta, r_l, \alpha_l,\underline{\boldsymbol{\beta}}_l,\bar{\boldsymbol{\beta}}_l} \left\{ \boldsymbol{c}^{\mathsf T}\boldsymbol{y}+\eta+\rho\lambda+\sum_{l=1}^L\hat{p}_l(\boldsymbol{y})r_l: \eqref{model:variation+moment-general-ConstrFirst} - \eqref{model:variation+moment-general-ConstrEnd}, \{\mathcal{L}_l(\boldsymbol{y}, \alpha_l) \geq 0\} \ \forall l=1,\ldots, L \right\},
\]
obtain the optimal solution $(\boldsymbol{y}^*,\lambda^*,\eta^*,\alpha_l^*,\underline{\boldsymbol{\beta}}_l^*,\bar{\boldsymbol{\beta}}_l^*)$,  and update $LB = Z^*$. \label{alg:MasterProblemStep}
\For{$l=1,\ldots, L$}
\State Solve $\Psi_l(\boldsymbol{y}^*,\underline{\boldsymbol{\beta}}_l^*,\bar{\boldsymbol{\beta}}_l^*)$ for mode $l$ and obtain its optimal value $\Psi_l^*$ and optimal solution $(\xi^*,\boldsymbol{\omega}^*)$. 
\If{$\alpha_l^* < \Psi_l^*$}
\State Add cut $\alpha_l \geq \boldsymbol{R}(\boldsymbol{y})^{\mathsf T}\boldsymbol{\omega}^*- (\bar{\boldsymbol{\beta}}_l-\underline{\boldsymbol{\beta}}_l)^{\mathsf T}\boldsymbol{f}(\boldsymbol{\xi^*})$ to the set $\{\mathcal{L}_l(\boldsymbol{y}, \alpha_l) \geq 0\}$.
\EndIf
\EndFor
\State Obtain $UB' = \min_{\lambda,\eta, r_l} \{\boldsymbol{c}^{\mathsf T}\boldsymbol{y^*} + \eta+\rho\lambda+\sum_{l=1}^L\hat{p}_l(\boldsymbol{y}^*)r_l: \eqref{model:variation+moment-general-ConstrSecond} - \eqref{model:variation+moment-general-ConstrEnd},\ \lambda \geq 0,\  \underline{\boldsymbol{\beta}}_l = \underline{\boldsymbol{\beta}}_l^*,\ \bar{\boldsymbol{\beta}}_l = \bar{\boldsymbol{\beta}}_l^*,\ \alpha_l = \Psi_l^*, \ \forall l=1,\ldots, L \}$, and update $UB=\min\{UB, UB'\}$.
\State If no cut is added for any of the modes $l=1,\ldots, L$, then optimal solution is $(\boldsymbol{y}^*,\lambda^*,\eta^*,\alpha_l^*,\underline{\boldsymbol{\beta}}_l^*,\bar{\boldsymbol{\beta}}_l^*)$, otherwise go to Step \ref{alg:MasterProblemStep}.  
\end{algorithmic}
\end{algorithm}


\begin{theorem}
Given that there is an oracle that can solve the master problem to optimality, 
Algorithm \ref{alg:solutionAlg} converges in finitely many iterations to the optimal solution of \eqref{model:variation+moment-general-Algorithm}. 
\end{theorem}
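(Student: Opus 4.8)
The plan is to combine a counting argument on the pool of generated cuts (to get finite termination) with the observation that the master problem is always a relaxation of \eqref{model:variation+moment-general-Algorithm} with the same objective (to get optimality). First I would establish that only finitely many distinct cuts can ever be generated. By Lemma~\ref{lem:Finiteness}, for each mode $l$ the function $\Psi_l(\boldsymbol{y},\underline{\boldsymbol{\beta}}_l,\bar{\boldsymbol{\beta}}_l)$ is a convex piecewise linear function whose pieces correspond to the finitely many extreme points of the polyhedron $\Xi_l\times\bar{\Omega}$; in particular the separation problem is a feasible and bounded linear program, so we may assume the oracle returns an extreme point $(\boldsymbol{\xi}^*,\boldsymbol{\omega}^*)$ of $\Xi_l\times\bar{\Omega}$ as maximizer. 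Since every cut added in the cut-generation step of Algorithm~\ref{alg:solutionAlg} has the form $\alpha_l\ge \boldsymbol{R}(\boldsymbol{y})^{\mathsf T}\boldsymbol{\omega}^*-(\bar{\boldsymbol{\beta}}_l-\underline{\boldsymbol{\beta}}_l)^{\mathsf T}\boldsymbol{f}(\boldsymbol{\xi}^*)$ for such an extreme point, the total number of cuts that can ever enter the pool $\{\mathcal{L}_l(\boldsymbol{y},\alpha_l)\ge 0\}$ is at most $L$ times the number of extreme points of the $\Xi_l\times\bar{\Omega}$, hence finite.

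Next I would show that every non-terminating iteration adds at least one genuinely new cut. Suppose at some iteration the algorithm does not stop; then for some mode $l$ we have $\alpha_l^*<\Psi_l^*$, where $\Psi_l^*$ is attained at $(\boldsymbol{\xi}^*,\boldsymbol{\omega}^*)$. Evaluating the generated cut at the current master solution $(\boldsymbol{y}^*,\underline{\boldsymbol{\beta}}_l^*,\bar{\boldsymbol{\beta}}_l^*)$ gives right-hand side $\boldsymbol{R}(\boldsymbol{y}^*)^{\mathsf T}\boldsymbol{\omega}^*-(\bar{\boldsymbol{\beta}}_l^*-\underline{\boldsymbol{\beta}}_l^*)^{\mathsf T}\boldsymbol{f}(\boldsymbol{\xi}^*)=\Psi_l^*>\alpha_l^*$, so this cut is violated by the current master solution; therefore it could not already have been present in the pool, because the master solution satisfies all cuts in the pool by construction. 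Combining with the finiteness of the possible cuts, the algorithm can execute only finitely many iterations in which a cut is added, so after finitely many iterations no cut is added for any mode and the algorithm terminates.

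Finally I would verify optimality at termination. Each generated cut is implied by constraint~\eqref{eq:SeparationConstraint-LinearObj}, since $\Psi_l(\boldsymbol{y},\underline{\boldsymbol{\beta}}_l,\bar{\boldsymbol{\beta}}_l)\ge \boldsymbol{R}(\boldsymbol{y})^{\mathsf T}\boldsymbol{\omega}^*-(\bar{\boldsymbol{\beta}}_l-\underline{\boldsymbol{\beta}}_l)^{\mathsf T}\boldsymbol{f}(\boldsymbol{\xi}^*)$ for the fixed pair $(\boldsymbol{\xi}^*,\boldsymbol{\omega}^*)$; hence at every iteration the master problem is a relaxation of \eqref{model:variation+moment-general-Algorithm} with the identical objective, so its optimal value $Z^*$ is a lower bound on the optimal value of \eqref{model:variation+moment-general-Algorithm}. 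When the algorithm stops, $\alpha_l^*\ge\Psi_l^*=\Psi_l(\boldsymbol{y}^*,\underline{\boldsymbol{\beta}}_l^*,\bar{\boldsymbol{\beta}}_l^*)$ for every $l$, so the terminal master solution is feasible for \eqref{model:variation+moment-general-Algorithm} and attains objective value $Z^*$ there; combined with the lower bound this shows $Z^*$ is the optimal value and the terminal solution is optimal.

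The main obstacle I anticipate is the clean handling of the extreme-point reasoning underlying Lemma~\ref{lem:Finiteness}: one must argue that the separation oracle can be taken to return an extreme point of $\Xi_l\times\bar{\Omega}$, so that the cut pool is genuinely finite; this relies on boundedness of the separation linear program and, implicitly, on the polyhedron admitting vertices (or at least that the set of maximizers contains a vertex). Once that is in place, the ``a new cut is added at each non-terminating iteration'' argument and the relaxation/optimality argument are standard constraint-generation reasoning and should be routine.
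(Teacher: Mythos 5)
Your proof is correct and follows essentially the same route as the paper: the paper's own proof is a two-sentence sketch that invokes Lemma \ref{lem:Finiteness} for finite termination and the stopping criterion for optimality, and your argument simply fills in the details of exactly that reasoning (finitely many extreme-point cuts, a violated and hence new cut at every non-terminating iteration, and relaxation plus terminal feasibility giving optimality).
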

\begin{proof}
Using Lemma \ref{lem:Finiteness}, we can observe the finiteness of the algorithm. Since the algorithm stops when no violation is found, then the result follows. 
\end{proof}

We note that the master problem can be solved to optimality in many practical applications, such as the facility location problem studied in Section \ref{sec:Computations-FacilityLocation}, where the master problem has a MILP reformulation and the separation problem is a linear program as support and first moment information are utilized to construct the corresponding ambiguity set. Our results in Section~\ref{sec:CompStudy-RunTime} over a polyhedral support set (see Table \ref{tab:computational}) demonstrate convergence to the optimal solution 
along with significant computational speed-ups over various instances. We note that our algorithm is also valid when a discrete support set is considered for each mode $l=1,\cdots,L$, by solving the separation problem under each realization of the random variable $\boldsymbol{\xi}$, obtaining their optimum objective function values, selecting the maximum of these values as $\Psi_l(\boldsymbol{y}^*,\underline{\boldsymbol{\beta}}_l^*,\bar{\boldsymbol{\beta}}_l^*)$ and identifying the corresponding realization as $\boldsymbol{\xi}^*$ to be used in the subsequent cut generation process. 
}
{\color{black} We further note that as a potential future research direction, alternative decomposition algorithms can be extended to our setting by leveraging algorithms that solve general semi-infinite optimization problems, 
which can provide solutions with certain feasibility and optimality guarantees under specific assumptions 
\citep{Mehrotra2014_CuttingSurface, Luo2019_CuttingSurface}.} 

\section{Computational Results}
\label{sec:ComputationalStudy}

{\color{black}In our computational study, we provide an extensive set of results over two sample problem settings by analyzing our proposed modeling, reformulation, and solution techniques from various perspectives. Specifically, Section \ref{sec:Computations-FacilityLocation} illustrates our findings on a facility location problem with binary first-stage decisions, where we adopt the affine dependence discussed in Section \ref{sec:ModeAffineDependence} to model the mode probabilities. Section \ref{sec:Computations-Shipment} highlights our results on a shipment planning and pricing problem with a continuous first-stage pricing decision, where we utilize the linear scaling scheme discussed in Section \ref{sec:linear-scaling} to model the mode probabilities. 
}


\subsection{\color{black}Facility Location Problem}
\label{sec:Computations-FacilityLocation}
{\color{black}In this section, }
we consider a two-stage stochastic uncapacitated facility location problem, where the distribution of the random customer demand is multimodal and could be affected by our first-stage investment decisions. Specifically, we focus on the following model
\begin{align}
\min_{\boldsymbol{y}\in\mathcal{Y}\subseteq\{0,1\}^{I}} \boldsymbol{f}^{\mathsf T} \boldsymbol{y}+\max_{\mathbb{P} \in \Theta(\boldsymbol{y})}\mathbb{E}_{\boldsymbol{\xi}\sim \mathbb{P}}[h(\boldsymbol{y},\boldsymbol{\xi}(\boldsymbol{y}))] \label{model:facility-location}
\end{align}
Here $y_i=1$ if we open facility $i$ in the first stage and 0 otherwise. The investment cost is denoted by $f_i$ for all $i=1,\ldots, I$. 
For the second-stage problem, $h(\boldsymbol{y},\boldsymbol{\xi}(\boldsymbol{y}))$ measures the total resource-allocation cost minus the total revenue:
\begin{align*}
h(\boldsymbol{y},\boldsymbol{\xi}(\boldsymbol{y}))=\min\quad& \sum_{i=1}^I\sum_{j=1}^Jc_{ij}\xi_j(\boldsymbol{y})x_{ij}-\sum_{j=1}^Jr_j\xi_j(\boldsymbol{y})\sum_{i=1}^I x_{ij}+\sum_{j=1}^J p_js_j\xi_j(\boldsymbol{y})\\
    \text{s.t.}\quad & \sum_{i=1}^Ix_{ij}+s_j=1,\ \forall j=1,\ldots, J\\
    &x_{ij}\le y_i,\ \forall i=1,\ldots, I,\ j=1,\ldots, J\\
    &x_{ij},\ s_j\ge 0,\ \forall i=1,\ldots, I,\ j=1,\ldots, J,
\end{align*}
where $\xi_j\in\mathbb{R}_+$ denotes the uncertain customer demand at customer site $j$, $c_{ij},r_j, p_j$ are the unit transportation cost, unit revenue for satisfying demand, and unit penalty for unmet demand, respectively, while $x_{ij}\in\mathbb{R}_+$ denotes the fraction of the demand $\xi_j$ filled by facility $i$. 

In the remainder of this Section, we first provide the experimental setting in Section \ref{sec:CompStudy-ParameterSetup}. We then analyze the performances of the proposed decision-dependent multimodal models against their  single-modal and decision-independent counterparts under both moment-based and distance-based ambiguity sets in Section \ref{sec:CompStudy-Sensitivity}. We further discuss the impact of multimodality and decision-dependency under various parameter settings in Sections \ref{sec:CompStudy-EffectofMultimodality} and \ref{sec:CompStudy-EffectofDD}, respectively. We then analyze the performances of the proposed approaches under misspecified models with different out-of-sample scenarios in Section \ref{sec:CompStudy-MissecifiedModel}, and conclude our results with computational time comparisons of different approaches in Section \ref{sec:CompStudy-RunTime}.

\subsubsection{Parameter Setup}
\label{sec:CompStudy-ParameterSetup}
We randomly generate a set of $I=5$ potential facility locations and $J=10$ customer sites on a $100\times 100$ grid. At default, the unit investment cost $f_i$ is sampled uniformly between 1000 and 3000, the unit transportation cost $c_{ij}$ is calculated based on the Euclidean distance between facility $i$ and customer site $j$, the unit revenue $r_j$ is sampled uniformly between 50 and 100, and the unit penalty is $p_j=30$.

We consider $L=3$ modes, where only two of these modes are affected by our first-stage decisions. In the first mode, we assume that the product will become popular in the market and the demand will be greatly affected by our first-stage investment decisions. The ground truth demand model is assumed to be $\xi_{1,j}(\boldsymbol{y})=\bar{\mu}_{1,j}(1+0.5\sum_{i=1}^Ie^{-dist(i,j)/25}y_i)+\epsilon_{1,j}$ where $\epsilon_{1,j}\sim\mathcal{N}(0,\sigma_{1,j}^2)$ is the additive error. If we invest more in the first stage, mode 1 will be more likely to happen and as a result, we consider the mode probability to be $\hat{p}_1(\boldsymbol{y})=\bar{p}_1+0.01\sum_{i=1}^Iy_i$. The second mode assumes that the product fails to sell well with a much lower nominal demand mean $\bar{\mu}_{2,j}<\bar{\mu}_{1,j}$, but more investments in the first stage can still have some impact (lower than mode 1) on the demand. In this case, we assume the ground truth demand model to be $\xi_{2,j}(\boldsymbol{y})=\bar{\mu}_{2,j}(1+0.1\sum_{i=1}^Ie^{-dist(i,j)/25}y_i)+\epsilon_{2,j}$ where $\epsilon_{2,j}\sim\mathcal{N}(0,\sigma_{2,j}^2)$ is the additive error. As more investments in the first stage come in, mode 2 will be less likely to happen and as a result, we assume the mode probability to be $\hat{p}_2(\boldsymbol{y})=\bar{p}_2-0.01\sum_{i=1}^Iy_i$. The third mode represents a decision-independent situation, where the product demand is moderate ($\bar{\mu}_{2,j}<\bar{\mu}_{3,j}<\bar{\mu}_{1,j}$) and will not be affected by our investment decisions. In this case, the ground truth model is assumed to be $\xi_{3,j}(\boldsymbol{y})=\bar{\mu}_{3,j}+\epsilon_{3,j}$ where $\epsilon_{3,j}\sim\mathcal{N}(0,\sigma_{3,j}^2)$ is the additive error. Moreover, the mode probability is assumed to be $\hat{p}_3=\bar{p}_3=1-\bar{p}_1-\bar{p}_2$.

We first set $\bar{p}_1=0.5, \ \bar{p}_2=0.3$ and $\bar{p}_3=0.2$. For in-sample test, in mode 1, we sample the nominal demand $\bar{\mu}_{1,j}$ for each customer site $j$ following a Uniform distribution $\mathcal{U}(50, 100)$ and the nominal standard deviation of demand is set to $\bar{\sigma}_{1,j}=0.1\bar{\mu}_{1,j}$. For moment-based ambiguity sets, the empirical first moment is set to $\mu_{1,j}(\boldsymbol{y})=\bar{\mu}_{1,j}(1+0.5\sum_{i=1}^Ie^{-dist(i,j)/25}y_i)$, and $\epsilon^{\mu}_{1,j}=0$. The support size of demand values is taken as 200 with values in the range $\{1,\ldots,200\}$ at default. For distance-based ambiguity sets, the empirical uncertainty realizations are sampled from the true model, i.e.,  $\hat{\xi}_{1,k,j}(\boldsymbol{y})=\bar{\mu}_{1,j}(1+0.5\sum_{i=1}^Ie^{-dist(i,j)/25}y_i)+\hat{\epsilon}_{1,k,j}$ where $\hat{\epsilon}_{1,k,j}\sim\mathcal{N}(0,\sigma_{1,j}^2)$ are the empirical residuals. The support set for distance-based ambiguity set is $\Xi_l=\{0\le \xi\le 200\}$. We set $K_1=50,\ K_2=30,\ K_3=20$ in the distance-based ambiguity sets.
Accordingly, for modes 2 and 3, we set $\bar{\mu}_{2,j}=0.25\bar{\mu}_{1,j},\ \bar{\mu}_{3,j}=0.5\bar{\mu}_{1,j},\  \bar{\sigma}_{2,j}=0.1\bar{\mu}_{2,j},\ \bar{\sigma}_{3,j}=0.1\bar{\mu}_{3,j},\ \epsilon^{\mu}_{2,j}=\epsilon^{\mu}_{3,j}=0$ and define other parameters in a similar fashion. 


Given an optimal first-stage decision $\boldsymbol{y}^*$, we set the total out-of-sample scenarios to 1000 at default. Then we sample $1000\times (\bar{p}_1+0.01\sum_{i=1}^Iy^*_i)$ scenarios from mode 1 ($\xi_{1,j}(\boldsymbol{y}^*)=\bar{\mu}_{1,j}(1+0.5\sum_{i=1}^Ie^{-dist(i,j)/25}y^*_i)+\epsilon_{1,j}$), $1000\times (\bar{p}_1-0.01\sum_{i=1}^Iy^*_i)$ scenarios from mode 2 ($\xi_{2,j}(\boldsymbol{y}^*)=\bar{\mu}_{2,j}(1+0.1\sum_{i=1}^Ie^{-dist(i,j)/25}y^*_i)+\epsilon_{2,j}$), and $1000\times \bar{p}_3$ scenarios from mode 3 ($\xi_{3,j}(\boldsymbol{y})=\bar{\mu}_{3,j}+\epsilon_{3,j}$).

We use Gurobi 10.0.0 coded in Python 3.11.0 for solving all mixed-integer programming models, where the computational time limit is set to one hour. Our numerical tests are conducted on a Macbook Pro with 8 GB RAM and an Apple M1 Pro chip. 

\subsubsection{Sensitivity Analysis}
\label{sec:CompStudy-Sensitivity}

We first conduct sensitivity analysis on a special instance (shown in Figure \ref{fig:locations2}) for moment-based ambiguity sets in Section \ref{sec:results-moment} and distance-based ambiguity sets in Section \ref{sec:results-distance}, respectively. 
\begin{figure}[ht!]
\centering
\begin{tikzpicture}[scale=0.75]
\begin{axis}[legend style={nodes={scale=0.8, transform shape}}, 
legend pos= outer north east]
    \addplot[
        scatter/classes={        a={mark=triangle*, fill=red},
        b={mark=*,fill=blue}},
        scatter, mark=*, only marks, 
        scatter src=explicit symbolic,
        nodes near coords*={\Label},
        visualization depends on={value \thisrow{label} \as \Label} 
    ] table [meta=class] {
        x y class label
        42 73 a {}
        4 47 a {}
        91 83 a {}
      13 65 a {}
        47 59 a {}
        14 85 a {}
        93 40 a {}
        62 60 a {}
        26 80 a {}
        63 99 a {}
         33 52 b \#1
        10 72 b \#2
        90 5 b \#3
        7 18 b \#4
       24 87 b \#5
    };
    \legend{Customer sites, Potential facilities}
\end{axis}
\end{tikzpicture}
\caption{Locations of customer sites and potential facilities on a 100$\times$100 grid} 
\label{fig:locations2}
\end{figure}
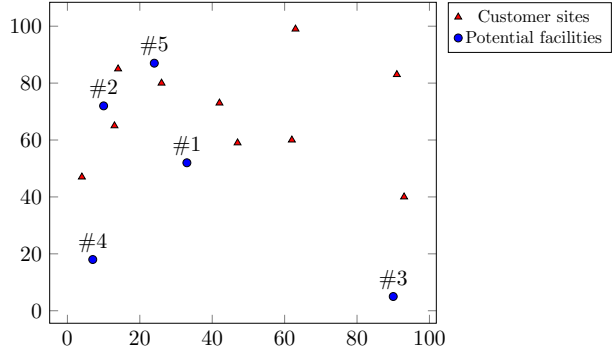

\paragraph{Moment-based Ambiguity Sets}\label{sec:results-moment}
We compare the solution pattern, in-sample (IS) and out-of-sample (OOS) cost returned by (i) Multi-Modal Moment-based $\rm{D^3RO}$ (MM-M-$\rm{D^3RO}$), (ii) Single-Modal Moment-based $\rm{D^3RO}$ (SM-M-$\rm{D^3RO}$) and (iii) Multi-Modal Moment-based DRO (MM-M-DRO) when we vary the first moment interval length $\epsilon^{\mu}/\bar{\mu}$ from 0 to 0.5 in Table \ref{tab:sensitivity-moment} and when we vary the support size $K$ in Table \ref{tab:sensitivity-moment-K}, respectively. 
We note that in comparison to our proposed model MM-M-$\rm{D^3RO}$, SM-M-$\rm{D^3RO}$ focuses on a single-modal setting with decision-dependent uncertainties as illustrated in Section \ref{sec:MultimodalAnalyticalResults}, whereas MM-M-DRO focuses on a multimodal setting without the decision-dependencies. 

\begin{table}[ht!]
  \centering
  \caption{Solution Comparison of Moment-based Ambiguity Sets with Varying $\epsilon^{\mu}/\bar{\mu}$}
  \resizebox{\textwidth}{!}{
    \begin{tabular}{l|rrr|rrr|rrr}
    \hline
    & \multicolumn{3}{c|}{MM-M-$\rm{D^3RO}$} & \multicolumn{3}{c|}{SM-M-$\rm{D^3RO}$} & \multicolumn{3}{c}{MM-M-DRO} \\
    $\epsilon^{\mu}/\bar{\mu}$& Solution & IS Cost& OOS Cost& Solution & IS Cost& OOS Cost& Solution & IS Cost& OOS Cost\\
    \hline
    0 & \textbf{[1,2,5]} & -2079 & \textbf{-3257} & [1,2,3,5] & 6932  & -2455 & [1,2] & 947   & -1465 \\
    0.1 & \textbf{[1,2,5]} & -603  & \textbf{-3257}& [1,2,3,5] & 8131  & -2455 & [1,2] & 2236  & -1465 \\
     0.2  & \textbf{[1,2,5]} & 875   & \textbf{-3257}& [1,2,3,5] & 9330  & -2455 & [1,2] & 3604  & -1465 \\
     0.3  & \textbf{[1,2,5]} & 2411  & \textbf{-3257} & [1,2,3] & 10517 & -584  & [1,2] & 5220  & -1465 \\
   0.4  & \textbf{[1,2,5]} & 3947  & \textbf{-3257} & [1,2,3] & 11630 & -584  & [1,2] & 6837  & -1465 \\
    0.5  & \textbf{[1,2,5]} & 5484  & \textbf{-3257} & [1,2,3] & 12742 & -584  & [1,2,3] & 8352  & -584 \\
    \hline
    \end{tabular}%
    }
  \label{tab:sensitivity-moment}%
\end{table}%

From Table \ref{tab:sensitivity-moment}, when we increase the interval length, all models return a higher in-sample cost, because we become more conservative. In terms of out-of-sample cost, MM-M-$\rm{D^3RO}$ is the most stable model, always producing the same first-stage decision and out-of-sample cost, while SM-M-$\rm{D^3RO}$ and MM-M-DRO produce worse solutions and out-of-sample costs when we increase the interval length. Under all settings, MM-M-$\rm{D^3RO}$ outperforms the other two, by choosing facilities \#1, \#2, and \#5 that are closer to customer sites. In our following tests, we fix $\epsilon^{\mu}=0$.

\begin{table}[ht!]
  \centering
  \caption{Solution Comparison of Moment-based Ambiguity Sets with Varying Support Size $K$}
  \resizebox{\textwidth}{!}{
    \begin{tabular}{l|rrr|rrr|rrr}
    \hline
    & \multicolumn{3}{c|}{MM-M-$\rm{D^3RO}$} & \multicolumn{3}{c|}{SM-M-$\rm{D^3RO}$} & \multicolumn{3}{c}{MM-M-DRO} \\
    $K$& Solution & IS Cost& OOS Cost& Solution & IS Cost& OOS Cost& Solution & IS Cost& OOS Cost\\
    \hline
    100 & \multicolumn{3}{c|}{{unbounded}} & [1,2,3,5] & 6559  & -2455 & [1,2] & 947   & -1465 \\
    200 & \textbf{[1,2,5]} & -2079 & \textbf{-3257} & [1,2,3,5] & 6932  & -2455 & [1,2] & 947   & -1465 \\
    300 & \textbf{[1,2,5]} & -2079 & \textbf{-3257} & [1,2,3,5] & 6932  & -2455 & [1,2] & 947   & -1465 \\
    400 & \textbf{[1,2,5]} & -2079 & \textbf{-3257} & [1,2,3,5] & 6932  & -2455 & [1,2] & 947   & -1465 \\
    500 & \textbf{[1,2,5]} & -2079 & \textbf{-3257} & [1,2,3,5] & 6932  & -2455 & [1,2] & 947   & -1465 \\
    \hline
    \end{tabular}%
    }
  \label{tab:sensitivity-moment-K}%
\end{table}%
From Table \ref{tab:sensitivity-moment-K}, when $K=100$, MM-M-$\rm{D^3RO}$ becomes unbounded, because the inner maximization problem in Model \eqref{model:DRO} is infeasible due to insufficient data points in the support set. Other than this setting, the three models are all insensitive to the support size $K$. Because of this, we fix $K=200$ in our following tests.

\paragraph{Distance-based Ambiguity Sets}\label{sec:results-distance}
We compare the solution pattern, in-sample (IS) and out-of-sample (OOS) cost returned by (i) Multi-Modal Distance-based $\rm{D^3RO}$ (MM-D-$\rm{D^3RO}$), (ii) Single-Modal Distance-based $\rm{D^3RO}$ (SM-D-$\rm{D^3RO}$) and (iii) Multi-Modal Distance-based DRO (MM-D-DRO) when we vary the radius $\epsilon$ in Table \ref{tab:sensitivity-distance}. Note that when $\epsilon=[0, 0, 0]$, the MM-D-$\rm{D^3RO}$ reduces to the MM-DD-SP under mode ambiguity. We compare this setting with the other two constant radii (when $\epsilon=[0.2, 0.2, 0.2]$ and $\epsilon=[10, 10, 10]$) as well as the setting when the radius is inversely proportional to the number of data points in each mode ($\epsilon=[0.2, 0.33, 0.5]$) to reflect different confidence for the estimation in each mode. 

\begin{table}[ht!]
  \centering
  \caption{Solution Comparison of Distance-based Ambiguity Sets with Varying Radius $\epsilon$}
  \resizebox{\textwidth}{!}{
    \begin{tabular}{l|rrr|rrr|rrr}
    \hline
    & \multicolumn{3}{c|}{MM-D-$\rm{D^3RO}$} & \multicolumn{3}{c|}{SM-D-$\rm{D^3RO}$} & \multicolumn{3}{c}{MM-D-DRO} \\
    $\epsilon$& Solution & IS Cost& OOS Cost& Solution & IS Cost& OOS Cost& Solution & IS Cost& OOS Cost\\
    \hline
   [0, 0, 0] & \textbf{[1,2,5]} & -1986 & \textbf{-3257} & [1,2,3,5] & 10036 & -2455 & [1,2] & 1015 & -1465 \\
   $[0.2, 0.2, 0.2]$ & \textbf{[1,2,5]} & -1986 & \textbf{-3257} & [1,2,3,5] & 10036 & -2455 & [1,2] & 1015 & -1465 \\
    $[0.2, 0.33, 0.5]$ & \textbf{[1,2,5]} & -1950 & \textbf{-3257} & [1,2,3,5] & 10065 & -2455 & [1,2] & 1049 & -1465 \\
     $[10, 10, 10]$ & \textbf{[1,2,5]} & 1306 & \textbf{-3257} & [1,2,3] & 12909 & -584 & [1,2] & 3936 & -1465 \\
     \hline
    \end{tabular}%
    }
  \label{tab:sensitivity-distance}%
\end{table}%

Comparing these three models, MM-D-$\rm{D^3RO}$ and MM-D-DRO are insensitive to the radius, while SM-D-$\rm{D^3RO}$ returns a different decision when we increase the radius. On the other hand,  MM-D-$\rm{D^3RO}$ always produces the same first-stage decision as  MM-M-$\rm{D^3RO}$, which obtains a better out-of-sample cost than the other two benchmarks. 
We note that the results obtained in Sections \ref{sec:results-moment} and \ref{sec:results-distance} for moment-based and distance-based ambiguity sets are further in line with our analytical results derived in Section \ref{sec:MultimodalAnalyticalResults} with better in-sample results and overall performances of the multimodal decision-dependent model MM-D-$\rm{D^3RO}$ against its single-modal counterpart model SM-D-$\rm{D^3RO}$. In the following tests, we fix the radius $\epsilon=[0.2, 0.33, 0.5]$.

We also report the sensitivity results of distance-based ambiguity sets when we vary the support set size $K$ from 100 to 500 in Table \ref{tab:sensitivity-distance-K}.

\begin{table}[ht!]
  \centering
  \caption{Solution Comparison of Distance-based Ambiguity Sets with Varying Support Size $K$}
  \resizebox{\textwidth}{!}{
    \begin{tabular}{l|rrr|rrr|rrr}
    \hline
    & \multicolumn{3}{c|}{MM-D-$\rm{D^3RO}$} & \multicolumn{3}{c|}{SM-D-$\rm{D^3RO}$} & \multicolumn{3}{c}{MM-D-DRO} \\
    $K$& Solution & IS Cost& OOS Cost& Solution & IS Cost& OOS Cost& Solution & IS Cost& OOS Cost\\
    \hline
    100 & \multicolumn{3}{c|}{{unbounded}} & \multicolumn{3}{c|}{unbounded} & [1,2] & 1020  & -1465 \\
    200 & \textbf{[1,2,5]} & -1950 & \textbf{-3257} & [1,2,3,5] & 10065 & -2455 & [1,2] & 1049  & -1465 \\
    300 & \textbf{[1,2,5]} & -1950 & \textbf{-3257} & [1,2,3] & 14949 & -584  & [1,2] & 1049  & -1465 \\
    400 & \textbf{[1,2,5]} & -1950 & \textbf{-3257} & [1,2,3] & {17815} & -584  & [1,2] & 1049  & -1465 \\
    500 & \textbf{[1,2,5]} & -1950 & \textbf{-3257} & [1,2,3] & 19575 & -584  & [1,2] & 1049  & -1465 \\
    \hline
    \end{tabular}%
    }
  \label{tab:sensitivity-distance-K}%
\end{table}%
From Table \ref{tab:sensitivity-distance-K}, when $K=100$, both MM-D-$\rm{D^3RO}$ and SM-D-$\rm{D^3RO}$ become unbounded due to an infeasible inner maximization problem. 
As we increase the support set size $K$, the gap between MM-D-$\rm{D^3RO}$ and SM-D-$\rm{D^3RO}$ increases, which is in line with our analytical results in Section \ref{sec:MultimodalAnalyticalResults}. Moreover, MM-D-DRO produces stable results when we vary the support set size $K$.


\subsubsection{Effect of Multimodality}
\label{sec:CompStudy-EffectofMultimodality}

In this section, we compare the multimodal $\rm{D^3RO}$ model with the single-modal $\rm{D^3RO}$ model under moment-based and distance-based ambiguity sets under different robustness levels and support sizes, respectively. More specifically, we vary the robustness level $\rho$ from 0 to 0.5 and the support size $K$ from 200 to 500 and display the average in-sample and out-of-sample costs over 10 independent runs in Figure \ref{fig:multimodality}. From Figure \ref{fig:multimodality}(a), the in-sample costs of MM-M-${\rm D^3RO}$ and MM-D-${\rm D^3RO}$ almost coincide, and the gap between multimodal ${\rm D^3RO}$ and their single-modal counterparts increases as we increase the robustness level $\rho$. In terms of out-of-sample cost, multimodal ${\rm D^3RO}$ produces more stable solutions under both moment-based and distance-based ambiguity sets, while single-modal counterparts' performances become worse when $\rho$ increases. When we change the support size $K$, all four models return the same in-sample and out-of-sample costs, except that SM-D-${\rm D^3RO}$ generates higher costs when $K$ increases. The gap between MM-D-${\rm D^3RO}$ and its single-modal counterpart increases as we increase the support size $K$, which agrees with our analytical results in Section \ref{sec:MultimodalAnalyticalResults}.
\begin{figure}[ht!]
    \centering
    \begin{subfigure}{0.45\textwidth}
 \resizebox{\textwidth}{!}{%
 \pgfplotsset{scaled y ticks=true}
\begin{tikzpicture}
  \begin{axis}
  [
    xlabel={Robustness Level $\rho$},
    ylabel={Average In-Sample Cost},
    xtick={0, 0.1, 0.2, 0.3, 0.4, 0.5},
    scaled y ticks = true,
cycle list name=black white,
   legend pos= outer north east
]
    \addplot coordinates {
(0,	-6203.549603)
(0.1, -5354)
(0.2, -4507.172229)
(0.3, -3666.781821)
(0.4, -2858.924107)
(0.5, -2063.480441)
    };\pgfplotsset{cycle list shift=4}
    \addplot coordinates {
(0,	-6203.549608)
(0.1, -1126)
(0.2,3849.550397)
(0.3, 8545.804414)
(0.4, 13023.28598)
(0.5, 14542.35361)
    };\pgfplotsset{cycle list shift=2}
                \addplot coordinates {
(0,	-6128.111024)
(0.1, -5277)
(0.2,-4429.570525)
(0.3, -3588.21012)
(0.4, -2781.520148)
(0.5, -1985.122634)
    };\pgfplotsset{cycle list shift=5}
                    \addplot coordinates {
(0,	-6128.111024)
(0.1, 257)
(0.2,6527.220097)
(0.3, 11820.11959)
(0.4, 14990.15817)
(0.5, 16644.64346)
    };
    \legend{MM-M-${\rm D^3RO}$, SM-M-${\rm D^3RO}$, MM-D-${\rm D^3RO}$, SM-D-${\rm D^3RO}$}
  \end{axis}
\end{tikzpicture}%
}
 \caption{In-sample cost w.r.t. robustness level $\rho$}
\end{subfigure}
\begin{subfigure}{0.45\textwidth}
\resizebox{\textwidth}{!}{%
\pgfplotsset{scaled y ticks=true}
\begin{tikzpicture}
  \begin{axis}
  [
    xlabel={Robustness Level $\rho$},
    ylabel={Average Out-of-Sample Cost},
    xtick={0, 0.1, 0.2, 0.3, 0.4, 0.5},
    scaled y ticks = true,
    cycle list name=black white,
     legend pos= outer north east
    ]
    \addplot coordinates {
(0,	-6206.25289)
(0.1, -6206)
(0.2, -6191)
(0.3, -6146)
(0.4, -6084)
(0.5, -6043.610372)
    };\pgfplotsset{cycle list shift=4}
    \addplot coordinates {
(0,	-6206.25289)
(0.1,-6129)
(0.2, -5856)
(0.3, -5130)
(0.4,-4824)
(0.5, -4120)
    };\pgfplotsset{cycle list shift=2}
        \addplot coordinates {
(0,	-6206.25289)
(0.1, -6206)
(0.2, -6191)
(0.3, -6191)
(0.4, -6044)
(0.5, -6043.610372)
    };        \pgfplotsset{cycle list shift=5}
    \addplot coordinates {
(0,	-6206.25289)
(0.1,-6206)
(0.2, -5970)
(0.3, -5161)
(0.4,-4697)
(0.5, -4823.634921)
    };
    \legend{MM-M-${\rm D^3RO}$, SM-M-${\rm D^3RO}$, MM-D-${\rm D^3RO}$, SM-D-${\rm D^3RO}$}
  \end{axis}
\end{tikzpicture}%
}
\caption{Out-of-sample cost w.r.t. robustness level $\rho$}
\end{subfigure}
\begin{subfigure}{0.45\textwidth}
 \resizebox{\textwidth}{!}{%
 \pgfplotsset{scaled y ticks=true}
\begin{tikzpicture}
    \begin{axis}
  [
        xlabel={Support size $K$},
    ylabel={Average In-Sample Cost},
    xtick={200, 300, 400, 500},
    scaled y ticks = true,
    cycle list name=black white,
    legend pos= outer north east
    ]
    \addplot coordinates {
(200, -4507.172229)
(300, -4507.172229)
(400, -4507.172229)
(500, -4507.172229)
    };\pgfplotsset{cycle list shift=4}
    \addplot coordinates {
(200, 3849.550397)
(300, 3849.550397)
(400,3849.550397)
(500, 3849.550397)
    };\pgfplotsset{cycle list shift=2}
        \addplot coordinates {
(200, -4429.570525)
(300, -4429.570525)
(400, -4429.570525)
(500, -4429.570525)
    };\pgfplotsset{cycle list shift=5}
            \addplot coordinates {
(200, 6527.220097)
(300, 11832.30017)
(400, 15225.92016)
(500, 17185.97332)
    };
    \legend{MM-M-${\rm D^3RO}$, SM-M-${\rm D^3RO}$, MM-D-${\rm D^3RO}$, SM-D-${\rm D^3RO}$}
  \end{axis}
\end{tikzpicture}%
}
\caption{In-sample cost w.r.t. support size $K$}
\end{subfigure}
\begin{subfigure}{0.45\textwidth}
 \resizebox{\textwidth}{!}{%
 \pgfplotsset{scaled y ticks=true}
\begin{tikzpicture}
    \begin{axis}
  [
        xlabel={Support size $K$},
    ylabel={Average Out-of-Sample Cost},
    xtick={200, 300, 400, 500},
    scaled y ticks = true,
    cycle list name=black white,
    legend pos= outer north east
    ]
    \addplot coordinates {
(200, -6191)
(300, -6191)
(400, -6191)
(500, -6191)
    };\pgfplotsset{cycle list shift=4}
    \addplot coordinates {
(200, -5856)
(300, -5856)
(400, -5856)
(500, -5856)
    };\pgfplotsset{cycle list shift=2}
        \addplot coordinates {
(200, -6191)
(300, -6191)
(400, -6191)
(500, -6191)
    };\pgfplotsset{cycle list shift=5}
            \addplot coordinates {
(200, -5970)
(300, -5161)
(400, -4697)
(500,-4824)
    };
    \legend{MM-M-${\rm D^3RO}$, SM-M-${\rm D^3RO}$, MM-D-${\rm D^3RO}$, SM-D-${\rm D^3RO}$}
  \end{axis}
\end{tikzpicture}%
}
\caption{Out-of-sample cost w.r.t. support size $K$}
\end{subfigure}
    \caption{In-sample and out-of-sample cost comparison between multimodal $\rm{D^3RO}$ model and its single-modal counterpart with different robustness level $\rho$ and support size $K$.}
    \label{fig:multimodality}
\end{figure}
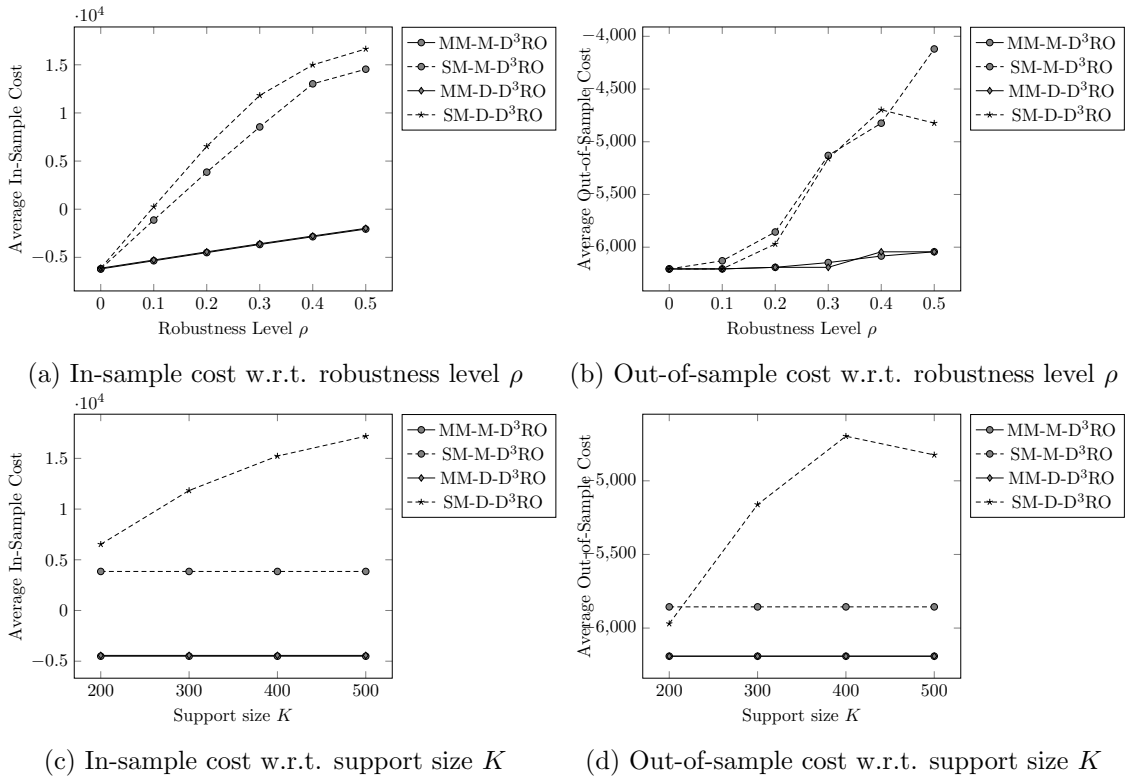

\subsubsection{Effect of Decision-Dependency}
\label{sec:CompStudy-EffectofDD}

In this section, we compare the multimodal $\rm{D^3RO}$ model with the multimodal decision-independent DRO model under moment-based and distance-based ambiguity sets, respectively. Specifically, we vary the robustness level $\rho$ from 0 to 0.5 and display the average in-sample and out-of-sample costs over 10 independent runs in Figure \ref{fig:decision-dependency}. From Figure \ref{fig:decision-dependency}, multimodal $\rm{D^3RO}$ models always generate lower in-sample and out-of-sample costs than the decision-independent counterparts, showing the benefit of considering decision-dependency. On the other hand, as we increase the robustness level $\rho$, all models produce worse in-sample and out-of-sample costs as we become more conservative.
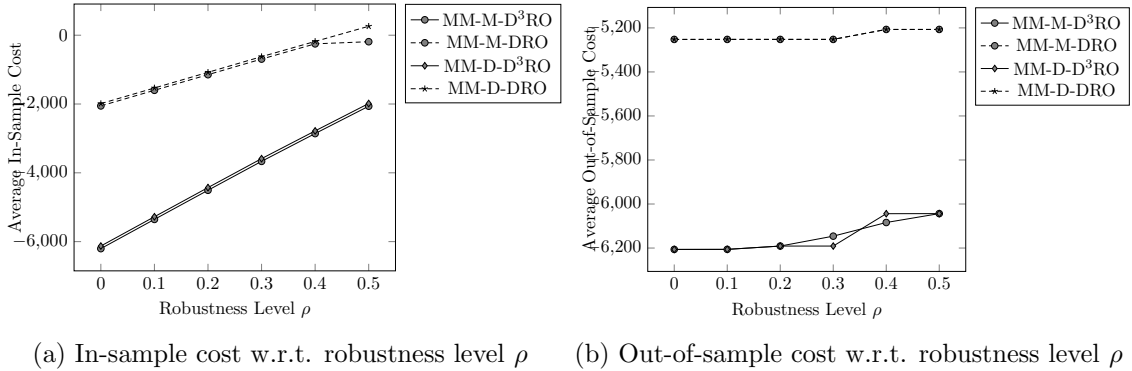
\begin{figure}[ht!]
    \centering
    \begin{subfigure}{0.45\textwidth}
 \resizebox{\textwidth}{!}{%
\begin{tikzpicture}
  \begin{axis}
  [
    xlabel={Robustness Level $\rho$},
    ylabel={Average In-Sample Cost},
    xtick={0, 0.1, 0.2, 0.3, 0.4, 0.5},
cycle list name=black white,
   legend pos= outer north east
]
    \addplot coordinates {
(0,	-6203.549603)
(0.1, -5354)
(0.2, -4507.172229)
(0.3, -3666.781821)
(0.4, -2858.924107)
(0.5, -2063.480441)
    };\pgfplotsset{cycle list shift=4}
    \addplot coordinates {
(0,	-2054.87)
(0.1, -1601)
(0.2,-1147.36)
(0.3, -693.605)
(0.4, -250.89)
(0.5, -188.985)
    };\pgfplotsset{cycle list shift=2}
            \addplot coordinates {
(0,	-6128.111024)
(0.1, -5277)
(0.2,-4429.570525)
(0.3, -3588.21012)
(0.4, -2781.520148)
(0.5, -1985.122634)
    };\pgfplotsset{cycle list shift=5}
                \addplot coordinates {
(0,	-1988.967779)
(0.1, -1534)
(0.2, -1078.599884)
(0.3, -623.4159372)
(0.4, -181.3136727)
(0.5, 259.9326129)
    };
    \legend{MM-M-${\rm D^3RO}$, MM-M-DRO, MM-D-${\rm D^3RO}$, MM-D-DRO}
  \end{axis}
\end{tikzpicture}%
}
 \caption{In-sample cost w.r.t. robustness level $\rho$}
\end{subfigure}
\begin{subfigure}{0.45\textwidth}
\resizebox{\textwidth}{!}{%
 \pgfplotsset{scaled y ticks=true}
\begin{tikzpicture}
  \begin{axis}
  [
    xlabel={Robustness Level $\rho$},
    ylabel={Average Out-of-Sample Cost},
    xtick={0, 0.1, 0.2, 0.3, 0.4, 0.5},
    cycle list name=black white,
     legend pos= outer north east
    ]
    \addplot coordinates {
(0,	-6206.25289)
(0.1, -6206)
(0.2, -6191)
(0.3, -6146)
(0.4, -6084)
(0.5, -6043.610372)
    };\pgfplotsset{cycle list shift=4}
    \addplot coordinates {
(0,	-5252.306732)
(0.1,-5252.306732)
(0.2, -5252.306732)
(0.3, -5252.306732)
(0.4,-5207)
(0.5, -5207)
    };\pgfplotsset{cycle list shift=2}
        \addplot coordinates {
(0,	-6206.25289)
(0.1, -6206)
(0.2, -6191)
(0.3, -6191)
(0.4, -6044)
(0.5, -6043.610372)
    };        \pgfplotsset{cycle list shift=5}
    \addplot coordinates {
(0,	-5252.306732)
(0.1,-5252)
(0.2, -5252)
(0.3, -5252)
(0.4,-5207)
(0.5, -5207)
    };
    \legend{MM-M-${\rm D^3RO}$, MM-M-DRO, MM-D-${\rm D^3RO}$, MM-D-DRO}
  \end{axis}
\end{tikzpicture}%
}
\caption{Out-of-sample cost w.r.t. robustness level $\rho$}
\end{subfigure}
    \caption{In-sample and out-of-sample cost comparison between multimodal $\rm{D^3RO}$ model and its decision-independent counterpart with different robustness level $\rho$.} 
    \label{fig:decision-dependency}
\end{figure}


\subsubsection{Effect of Misspecified Model}
\label{sec:CompStudy-MissecifiedModel}

In this section, we evaluate the impact when we have a misspecified model. Specifically, we consider the case when the out-of-sample scenarios are generated from a distribution different than the in-sample scenarios in Section \ref{sec:distribution-shift} and the case when the mode probabilities are misspecified in Section \ref{sec:misspecified-mode}

\paragraph{Distribution Shift}\label{sec:distribution-shift}
We compare the average out-of-sample costs over 10 independent runs when the out-of-sample scenarios are generated from a normal distribution with skewness of 0 (same distribution as the in-sample scenarios), skewness of 10 (right-skewed), skewness of -10 (left-skewed), or the mean shifted to the right by 10 (mean shift) in Table \ref{tab:shift} under both moment-based and distance-based ambiguity sets. From Table \ref{tab:shift}, as we increase the demand mean in the out-of-sample test or right skew the distribution, the out-of-sample costs all become smaller compared to the one with a well-specified model. 
Additionally, in all of the cases, the proposed model MM-M-$\rm{D^3RO}$ performs better and more stable compared to the other approaches with smaller out-of-sample cost values. 
\begin{table}[ht!]
  \centering
  \caption{Out-of-sample cost comparison with possible distribution shifts for generating out-of-sample scenarios.}
    \begin{tabular}{lrrrr}
    \hline
    Model & \multicolumn{1}{l}{skewness = 0} & \multicolumn{1}{l}{skewness = 10} & \multicolumn{1}{l}{skewness = -10} &  \multicolumn{1}{l}{mean shift}\\
    \hline
    MM-M-$\rm{D^3RO}$ & \textbf{-6191} & \textbf{-6864} & \textbf{-5512} & \textbf{-7780} \\
    SM-M-$\rm{D^3RO}$& -5856 & -6505 & -5197 & -7403 \\
    MM-M-DRO & -5252 & -5837 & -4657 & -6642 \\
        \hline
    MM-D-$\rm{D^3RO}$  & \textbf{-6191} & \textbf{-6864} & \textbf{-5512} & \textbf{-7780} \\
    SM-D-$\rm{D^3RO}$ & -5970 & -6659 & -5274 & -7598 \\
    MM-D-DRO & -5252 & -5837 & -4657 & -6642 \\
    \hline
    \end{tabular}%
  \label{tab:shift}%
\end{table}%

\paragraph{Misspecified Mode Probabilities}\label{sec:misspecified-mode}
To evaluate the performances of the proposed approaches under misspecified mode probabilities, we compare the average out-of-sample costs over 10 independent runs in Table \ref{tab:misspecified-mode} under both moment-based and distance-based ambiguity sets. Specifically, we assume that the true mode probability is $p=[\hat{p}_1(\boldsymbol{y})+\Delta, \hat{p}_2(\boldsymbol{y})-\Delta, \hat{p}_3(\boldsymbol{y})]$. Our results are in line with the findings in Section \ref{sec:distribution-shift} with better performance of the MM-M-$\rm{D^3RO}$ model, demonstrating the stability of the approach under various distributional ambiguities over mode probabilities. 
\begin{table}[htbp]
  \centering
  \caption{Out-of-sample cost comparison with misspecified mode probabilities.}
    \begin{tabular}{lrrrrr}
    \hline
    Model & $\Delta=0$ & $\Delta=0.1$ & \multicolumn{1}{l}{$\Delta=0.2$} & \multicolumn{1}{l}{$\Delta=-0.1$} & \multicolumn{1}{l}{$\Delta=-0.2$} \\
    \hline
    MM-M-$\rm{D^3RO}$& \textbf{-6191} & \textbf{-7809} & \textbf{-9428} & \textbf{-4572} & \textbf{-2953} \\
    SM-M-$\rm{D^3RO}$ & -5856 & -7388 & -8922 & -4323 & -2787 \\
    MM-M-DRO & -5252 & -6565 & -7875 & -3943 & -2629 \\
    \hline
    MM-D-$\rm{D^3RO}$ & \textbf{-6191} & \textbf{-7809} & \textbf{-9428} & \textbf{-4572} & \textbf{-2953} \\
    SM-D-$\rm{D^3RO}$  & -5970 & -7590 & -9210 & -4350 & -2731 \\
    MM-D-DRO  & -5252 & -6565 & -7875 & -3943 & -2629 \\
    
    \hline
    \end{tabular}%
  \label{tab:misspecified-mode}%
\end{table}%

\subsubsection{Computational Time}
\label{sec:CompStudy-RunTime}

Finally, we compare the computational time of different models when we increase the in-sample scenarios $\sum_{l=1}^LK_l$ for distance-based settings and support size $K$ for moment-based settings in Figure \ref{fig:time}. From Figure \ref{fig:time}(a), MM-D-${\rm D^3RO}$ is the most computationally expensive. From Figure \ref{fig:time}(b), moment-based ambiguity sets are less time consuming than the distance-based ambiguity sets, as all three models can solve all the instances within 10 seconds.
\begin{figure}[ht!]
    \centering
    \begin{subfigure}{0.45\textwidth}
 \resizebox{\textwidth}{!}{%
\begin{tikzpicture}
  \begin{axis}
  [
    xlabel={In-sample Scenarios $\sum_{l=1}^LK_l$},
    ylabel={Average Computational Time (sec.)},
    xtick={100, 200, 300, 400, 500},
cycle list name=black white,
   legend pos= outer north east
]
    \addplot coordinates {
(100, 11.54727113)
(200, 43.32906849)
(300, 108.9730914)
(400, 201.1358791)
(500, 322.6592803)
    };\pgfplotsset{cycle list shift=5}
    \addplot coordinates {
(100, 8.023281813)
(200, 23.14223015)
(300, 23.24498968)
(400, 34.93838937)
(500, 48.21160278)
    };\pgfplotsset{cycle list shift=5}
            \addplot coordinates {
(100, 0.484147692)
(200, 1.132741976)
(300, 2.07832346)
(400, 3.05647676)
(500, 3.979480195)
    };
    \legend{MM-D-${\rm D^3RO}$, SM-D-${\rm D^3RO}$, MM-D-DRO}
  \end{axis}
\end{tikzpicture}%
}
 \caption{Average computational time w.r.t. total number of in-sample scenarios $\sum_{l=1}^LK_l$}
\end{subfigure}
\begin{subfigure}{0.45\textwidth}
\resizebox{\textwidth}{!}{%
 \pgfplotsset{scaled y ticks=true}
\begin{tikzpicture}
  \begin{axis}
  [
    xlabel={Support size $K$},
    ylabel={Average Computational Time (sec.)},
    xtick={100, 200, 300, 400, 500},
    cycle list name=black white,
     legend pos= outer north east
    ]
    \addplot coordinates {
(100, 0.873581386)
(200, 2.201040602)
(300, 3.739864302)
(400, 4.844594979)
(500, 6.832366323)
    };\pgfplotsset{cycle list shift=5}
    \addplot coordinates {
(100, 0.407206345)
(200, 0.777902198)
(300, 1.277888823)
(400, 1.795866919)
(500, 2.34473083)
    };
        \addplot coordinates {
(100, 0.568350506)
(200, 1.240767646)
(300, 1.929992008)
(400, 2.705841255)
(500, 3.413926697)
    };        
    \legend{MM-M-${\rm D^3RO}$, SM-M-${\rm D^3RO}$, MM-M-DRO}
  \end{axis}
\end{tikzpicture}%
}
\caption{Average computational time w.r.t. the support set size $K$}
\end{subfigure}
    \caption{Computational time comparison with different in-sample scenarios $\sum_{l=1}^LK_l$ and support size $K$.}
    \label{fig:time}
\end{figure}
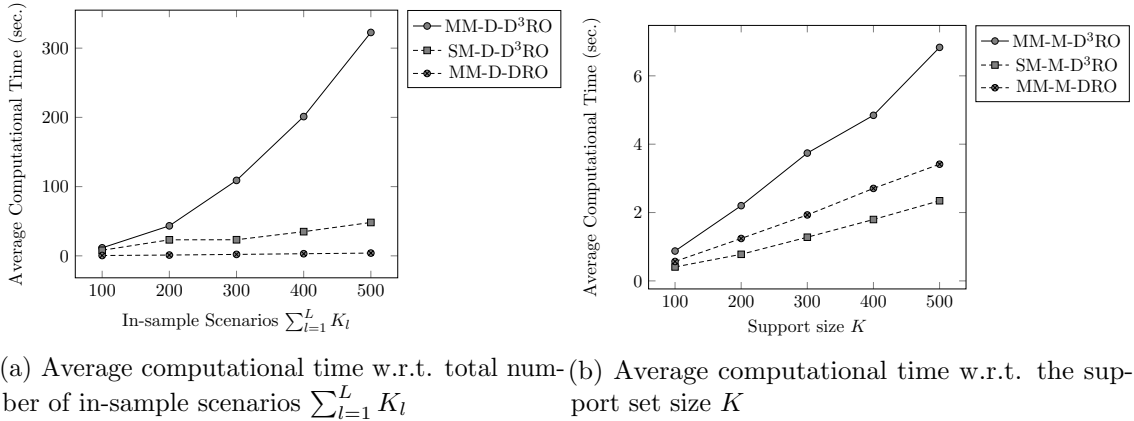

{\color{black}
We also compare the computational time of MM-M-${\rm D^3RO}$ between the discrete support problem ($K=200$) solved by Gurobi and the continuous support problem solved by our decomposition algorithm \ref{alg:solutionAlg}. The time limit for the discrete support problem is set to 3 hours. For the continuous support problem, we set the time limit for solving each master problem to 900 seconds and terminate the algorithm when the gap between LB and UB is within 1\%. We report the performance when we increase the problem size from $(I, J) = (5, 10)$ to $(I, J) = (30, 60)$ in Table \ref{tab:computational}, where the optimality gaps returned by Gurobi after the time limit are marked in the parenthesis.
\begin{table}[ht!]
  \centering
  \caption{Computational Time Comparison Between Discrete Support Problem Solved by Gurobi and Continuous Support Problem Solved by Decomposition Algorithm}
  \resizebox{\textwidth}{!}{
    \begin{tabular}{l|rrr|rrrrr}
    \hline
          & \multicolumn{3}{c|}{K = 200} & \multicolumn{5}{c}{Decomposition Algorithm for Contiuous Support} \\
          & \multicolumn{1}{r}{Time (sec.)} & IS Cost   & OOS Cost & Time (sec.)  & LB    & UB    & OOS cost & \# iterations \\
          \hline
    $(I, J) = (5, 10)$ & \multicolumn{1}{r}{1.55} & -2,079 & -3,257 & 0.32  & -2,079 & -2,079 & -3,257 & 12 \\
    $(I, J) = (10, 20)$ & \multicolumn{1}{r}{34.35} & -27,218 & -33,783 & 4.26  & -27,218 & -27,218 & -33,783 & 13 \\
    $(I, J) = (15, 30)$ & \multicolumn{1}{r}{1,613} & -1,079,154 & -99,177 & 104 & -1,079,154 & -1,079,154 & -99,177 & 13 \\
    $(I, J) = (20, 40)$ & 10,800  & -1,175,549 (18.85\%) & -203,517 & 2,711  & -1,175,549  & -1,175,549 & -203,517 & 27 \\
    $(I, J) = (25, 50)$ & 10,800 	&-1,050,037 (33.31\%) &	-129,440 & 3,615  &	-1,268,665  	&-1,268,665&	-295,309 & 19 \\
    $(I, J) = (30, 60)$  & 10,800 &	-1,192,186 (18.03\%)	&-342,615 & 4,517 &	-1,350,172 	&-1,349,350&	-541,263&	45 \\
    \hline
    \end{tabular}%
    }
  \label{tab:computational}%
\end{table}%
From Table \ref{tab:computational}, the decomposition algorithm can find optimal solutions by closing the gap between LB and UB when the problem size is within $(I,J)=(25,50)$, and it requires significantly less time compared to the discrete support problem solved by Gurobi. When the problem size increases, Gurobi fails to find an optimal solution for the discrete support problem within 3 hours and produces a large optimality gap. On the other hand, the decomposition algorithm can still reach a reasonable gap between LB and UB within one and a half hour. We also report the performance of the discrete support problem when we vary $K$ from 300 to 500 in Appendix \ref{sec:appendix-additional-results}.
}


{\color{black}
\subsection{Shipment Planning and Pricing Problem}
\label{sec:Computations-Shipment}
In this section, we follow \cite{bertsimas2020predictive} and consider a two-stage shipment planning and pricing problem, where we aim to ship products from $I$ warehouses to satisfy the demand at $J$ customer sites. Specifically, we focus on the following model:
\begin{align}\label{eq:shipment}
    \min_{y_1\in\mathbb{R}_+,\boldsymbol{y}_2 \in\mathbb{R}_+^{I}} P_1\sum_{i=1}^Iy_{2,i} + \max_{\mathbb{P} \in \Theta(y_1)}\mathbb{E}_{\boldsymbol{\xi}\sim\mathbb{P}} [h(\boldsymbol{y},\boldsymbol{\xi}(y_1))],
\end{align}
Here, $y_1\in\mathbb{R}_+$ is the price of this product, and $y_{2,i}\in\mathbb{R}_+$ is the production amount at warehouse $i$, for all $i=1,\ldots, I$. The unit production cost is denoted as $P_1\ge 0$. Then in the second stage, $h(\boldsymbol{y},\boldsymbol{\xi}(y_1))$ denotes the total last-minute production and transportation cost minus the profit:
\begin{subequations}\label{model:pricing}
\begin{align}
    h(\boldsymbol{y},\boldsymbol{\xi}(y_1)):=\min_{s_{ij},t_{i}\in \mathbb{R}_+} \quad & P_2\sum_{i=1}^It_{i}+\sum_{i=1}^I \sum_{j=1}^J c_{ij} s_{ij} - y_1\sum_{j=1}^J \xi_{j}(y_1)\\
    \textrm{s.t} \quad & \sum_{i=1}^I s_{ij} \ge \xi_{j}(y_1),\ \forall j=1,\ldots, J,\\
    \quad & \sum_{j=1}^J s_{ij} \leq y_{2,i}+t_{i},\ \forall i=1,\ldots, I,
\end{align}
\end{subequations}
 where demand $\xi_j(y_1)\in\mathbb{R}_+$ at each customer site $j$ depends on our first-stage pricing decision $y_1$. We ship $s_{ij}\ge 0$ units from warehouse $i$ to customer site $j$ at a unit transportation cost of $c_{ij}\ge 0$. We also have an option of last-minute production $t_i$ at warehouse $i$ with a higher unit production cost $P_2>P_1$.

Since Model \eqref{eq:shipment} is generally non-convex, we consider a relatively small problem size with $I=2$ and $J=1$. We set $P_1=0.01,\ P_2=0.02,\ c_{ij}=0.01,\ \forall i=1,\ldots,I,\ j=1,\ldots,J$. We assume that the random decision-dependent demand $\boldsymbol{\xi}(y_1)$ is multimodal and can have the following two modes. In the first mode, the price has a moderate impact on the demand, with a ground truth demand model $\xi_{1,j}(y_1)=\max\{0, -y_1+10\}+\epsilon_{1,j}$, where $\epsilon_{1,j}$ is the zero-mean noise term. In the second mode, the price has a higher impact on the demand: $\xi_{2,j}(y_1)=\max\{0, -2y_1+10\}+\epsilon_{2,j}$, where $\epsilon_{2,j}$ is the zero-mean noise term. We test this problem with moment-based ambiguity sets, where we set the empirical first moments to be $\mu_1(y_1)=\max\{0, -y_1+10\}$ and $\mu_2(y_1)=\max\{0, -2y_1+10\}$, respectively. The discrete support set is $\Xi_l=\{0, 1,\ldots,10\}$ for all $l=1,2$. Since the first-stage pricing decision $y_1$ is continuous, we adopt the linear scaling scheme in Section \ref{sec:linear-scaling} to describe our decision-dependent mode probability, i.e., $\hat{p}_1(y_1)=0.1y_1,\ \hat{p}_2(y_1)=1-0.1y_1$. We also add additional constraints to ensure that $\hat{p}_1(y_1)$ and $\hat{p}_2(y_1)$ are within 0 and 1. The radius of the Variation distance set is set to $\rho=0.2$ at default. 

For this shipment planning and pricing problem, the decision-independent counterpart will always drive the price $y_1$ to infinity, as the demand is assumed to be independent of the price in this case. Due to the unboundedness of the decision-independent counterpart, we only compare our MM-M-$\rm{D^3RO}$ with the single-modal benchmark (SM-M-$\rm{D^3RO}$). We first vary the radius $\epsilon^{\mu}$ from 0.1 to 0.5 and report the optimal price $y_1$ and the in-sample and out-of-sample cost in Table \ref{tab:sensitivity-moment-pricing}. From Table \ref{tab:sensitivity-moment-pricing}, as we increase the radius, the optimal prices $y_1$ in MM-M-$\rm{D^3RO}$ and SM-M-$\rm{D^3RO}$ both drop, although they differ from each other significantly. On the other hand, MM-M-$\rm{D^3RO}$ always obtains lower in-sample and out-of-sample costs, compared to the single-modal counterpart.

\begin{table}[ht!]
  \centering
  \caption{Solution Comparison of Moment-based Ambiguity Sets with Varying $\epsilon^{\mu}$}
    \begin{tabular}{l|rrr|rrr}
    \hline
    & \multicolumn{3}{c|}{MM-M-$\rm{D^3RO}$} & \multicolumn{3}{c}{SM-M-$\rm{D^3RO}$} \\
    $\epsilon^{\mu}$& Price $y_1$ & IS Cost& OOS Cost& Price $y_1$ & IS Cost& OOS Cost\\
    \hline
    0.1 & 2.79 & -12.53  & -14.35 & 7.56 & -6.39  & -13.90 \\
     0.2  & 2.76 & -12.26  & -14.30& 7.55 & -6.24 & -13.93 \\
     0.3  & 2.72 & -11.99  & -14.25 & 7.54 & -6.09 & -13.94   \\
   0.4  & 2.68 & -11.72  & -14.20 & 7.53 & -5.94 & -13.96  \\
    0.5  & 2.65 & -11.46  & -14.14 & 7.52 & -5.79 & -13.99  \\
    \hline
    \end{tabular}%
  \label{tab:sensitivity-moment-pricing}%
\end{table}%

Next, we vary $\rho$ from 0 to 0.8 and report the in-sample and out-of-sample costs in Figure \ref{fig:pricing-rho}. As we increase $\rho$, the gaps of the in-sample costs between MM-M-$\rm{D^3RO}$ and SM-M-$\rm{D^3RO}$ increase, which agrees with our numerical results in Section \ref{sec:CompStudy-EffectofMultimodality}. Moreover, MM-M-$\rm{D^3RO}$ almost always achieves a better out-of-sample cost, compared to the single-modal counterpart, and their gaps also increase as we increase $\rho$.
\begin{figure}[ht!]
    \centering
    \begin{subfigure}{0.45\textwidth}
 \resizebox{\textwidth}{!}{%
 \pgfplotsset{scaled y ticks=true}
\begin{tikzpicture}
  \begin{axis}
  [
    xlabel={Robustness Level $\rho$},
    ylabel={In-Sample Cost},
    xtick={0, 0.2, 0.4, 0.6, 0.8},
    scaled y ticks = true,
cycle list name=black white,
   legend pos= outer north east
]
    \addplot coordinates {
(0, -13.08)
(0.2, -12.2)
(0.4, -11.46)
(0.6, -11.14)
(0.8, -11.14)
    };\pgfplotsset{cycle list shift=4}
    \addplot coordinates {
(0, -13.08)
(0.2, -8.37)
(0.4, -5.79)
(0.6, -4.06)
(0.8, -3.63)
    };\pgfplotsset{cycle list shift=2}
    \legend{MM-M-${\rm D^3RO}$, SM-M-${\rm D^3RO}$}
  \end{axis}
\end{tikzpicture}%
}
 \caption{In-sample cost w.r.t. robustness level $\rho$}
\end{subfigure}
\begin{subfigure}{0.45\textwidth}
\resizebox{\textwidth}{!}{%
\pgfplotsset{scaled y ticks=true}
\begin{tikzpicture}
  \begin{axis}
  [
    xlabel={Robustness Level $\rho$},
    ylabel={Out-of-Sample Cost},
    xtick={0, 0.2, 0.4, 0.6, 0.8},
    scaled y ticks = true,
    cycle list name=black white,
     legend pos= outer north east
    ]
    \addplot coordinates {
(0, -14.62)
(0.2, -14.42)
(0.4, -14.14)
(0.6, -13.67)
(0.8, -13.67)
    };\pgfplotsset{cycle list shift=4}
    \addplot coordinates {
(0, -14.62)
(0.2, -14.72)
(0.4, -13.99)
(0.6, -11.68)
(0.8, -6.63)
    };\pgfplotsset{cycle list shift=2}
    \legend{MM-M-${\rm D^3RO}$, SM-M-${\rm D^3RO}$}
  \end{axis}
\end{tikzpicture}%
}
\caption{Out-of-sample cost w.r.t. robustness level $\rho$}
\end{subfigure}
    \caption{In-sample and out-of-sample cost comparison between multimodal $\rm{D^3RO}$ model and its single-modal counterpart with different robustness level $\rho$.}
    \label{fig:pricing-rho}
\end{figure}
}

\section{Conclusion}
\label{sec:Conclusion}

In this paper, we propose a generic DRO framework for two-stage stochastic programs with multimodal uncertainties, when the first-stage decisions impact both the mode probabilities and distribution corresponding to each mode. To formulate this problem setting, we introduce a novel ambiguity set characterizing the decision-dependent mode probabilities through a $\phi$-divergence based set, while providing both moment-based and Wasserstein distance-based settings for representing the distributions corresponding to each mode. We then present two special cases of the $\phi$-divergence based set by considering variation distance and $\chi^2$-distance. By leveraging these two cases, we first derive generic reformulations under moment-based and Wasserstein distance-based settings, and obtain additional results for the distance-based setting under objective and constraint uncertainty cases of the second-stage problem. 
We further present different functions to represent the dependency between the first-stage decisions and mode probability to provide tractable reformulations and settings that can be applicable to different applications. 
Moreover, we provide special cases to obtain MILP or MISOCP based tractable and exact reformulations over these reformulations that can be solved by the off-the-shelf solvers. 
To evaluate the value of the proposed multimodal decision-dependent DRO approach, we introduce its single-modal counterpart by providing alternative ambiguity sets and demonstrate its better performance analytically. 
{\color{black}Additionally, to solve large-scale instances, we provide a separation-based decomposition algorithm with finite convergence and optimality guarantees under certain settings.}
We present a detailed computational study on a facility location problem {\color{black}and a shipment planning problem with pricing} to illustrate our results by providing comparisons against single-modal and decision-independent approaches. We demonstrate that the proposed approach provides better in-sample and out-of-sample cost values for both moment-based and distance-based ambiguity sets under different robustness levels. These results are further validated under misspecified models to represent the distributional ambiguities for both mode probabilities and the distributions corresponding to these modes. 
{\color{black}We further illustrate the speed-ups obtained by the solution algorithm over various instances.}

Overall, our paper proposes a novel framework to address both multimodalities and decision-dependent uncertainties within a DRO problem while considering various forms of ambiguity sets, providing computationally tractable reformulations, and demonstrating its performance both analytically and computationally. 
As a future research direction, our proposed framework can be applied to various applications and extended by addressing 
potential non-convexities arising under specific forms of problem settings and ambiguity sets.

\bibliography{Xian_bib, referencesDDDR}
\bibliographystyle{apalike}

\appendix
{\color{black}
\section{Proofs for Section \ref{sec:ProblemFormulation}}
\label{sec:omittedProofs}

This section provides the omitted proofs of Theorems \ref{thm:DRO-PhiDivergence}-\ref{thm:DRO-ChiDistance}.}  

\begin{proof}[Proof of Theorem \ref{thm:DRO-PhiDivergence}]
    Denoting $\psi_l=\max_{\mathbb{P}_l \in \mathcal{U}_l(\boldsymbol{y})}\mathbb{E}_{\mathbb{P}_l}[h(\boldsymbol{y},\boldsymbol{\xi})]$, the inner maximization problem in Model \eqref{model:DRO} becomes
    \begin{subequations}\label{model:DRO-InnerMax}
    \begin{align}
        \max_{\boldsymbol{p}}\quad& \sum_{l=1}^Lp_l\psi_l\\
        \text{s.t.}\quad&\sum_{l=1}^Lp_l=1\\
        &\sum_{l=1}^L \hat{p}_l(y) \> \phi\left(\frac{p_l}{\hat{p}_l(y)}\right) \le \rho\\
        &p_l\ge 0,\ \forall l=1,\ldots,L
    \end{align}
    \end{subequations}
    This is a convex optimization problem due to the convexity of $\phi$ function.
    Assigning Lagrangian multipliers $\eta$ and $\lambda\ge 0$ to the above constraints, the Lagrange function is given by
    \begin{align*}
        L(\boldsymbol{p},\lambda,\eta)=\sum_{l=1}^Lp_l\psi_l+\eta(1-\sum_{l=1}^Lp_l)+\lambda(\rho-\sum_{l=1}^L \hat{p}_l(y) \> \phi\left(\frac{p_l}{\hat{p}_l(y)}\right))
    \end{align*}
    and the dual objective function is 
    \begin{align*}
    g(\lambda,\eta)&=\max_{\boldsymbol{p}\ge 0}L(\boldsymbol{p},\lambda,\eta)\\
    &=\eta+\rho\lambda+\max_{\boldsymbol{p}\ge 0}\left\{\sum_{l=1}^Lp_l\psi_l-\eta\sum_{l=1}^Lp_l-\lambda\sum_{l=1}^L \hat{p}_l(y) \> \phi\left(\frac{p_l}{\hat{p}_l(y)}\right)\right\}\\
    &=\eta+\rho\lambda+\sum_{l=1}^L\hat{p}_l(y)\max_{\frac{p_l}{\hat{p}_l(y)}\ge 0}\left\{\frac{p_l}{\hat{p}_l(y)}(\psi_l-\eta)-\lambda\> \phi\left(\frac{p_l}{\hat{p}_l(y)}\right)\right\}\\
    &=\eta+\rho\lambda+\sum_{l=1}^L\hat{p}_l(y)\max_{t\ge0} t(\psi_l-\eta)-\lambda\phi(t)\\
    &=\eta+\rho\lambda+\sum_{l=1}^L\hat{p}_l(y)(\lambda\phi)^*(\psi_l-\eta)\\
    &=\eta+\rho\lambda+\lambda\sum_{l=1}^L\hat{p}_l(y)\phi^*(\frac{\psi_l-\eta}{\lambda})
    \end{align*}
    
    Since $\sum_{l=1}^L\hat{p}_l(\boldsymbol{y})=1,\ \sum_{l=1}^L \hat{p}_l(y) \> \phi\left(\frac{\hat{p}_l(y)}{\hat{p}_l(y)}\right) =0< \rho$, Slater's condition holds and we can apply strong duality to recast the inner maximization problem \eqref{model:DRO-InnerMax} as $\min_{\lambda\ge 0}g(\lambda,\eta)$. This leads to the desired model \eqref{model:DRO-PhiDivergence}.
\end{proof}

\begin{proof}[Proof of Theorem \ref{thm:DRO-VariationDistance}]
    When $\phi(t)=|t-1|$, we have
    \begin{align*}
        \phi^*(s)=\begin{cases}
            -1,\ s\le -1,\\
            s,\ -1\le s\le 1,\\
            +\infty,\ s> 1.
        \end{cases}
    \end{align*}
    Denoting $r_l=\lambda \phi^*(\frac{\psi_l-\eta}{\lambda})$, we have
    \begin{align*}
        r_l&=\begin{cases}
            -\lambda, \ \psi_l-\eta\le -\lambda\\
            \psi_l-\eta,\ -\lambda\le\psi_l-\eta\le \lambda\\
            +\infty,\ \psi_l-\eta> \lambda.
        \end{cases}\\
        &=\begin{cases}
            \max\{-\lambda, \psi_l-\eta\}, \ \psi_l-\eta\le \lambda\\
            +\infty,\ \psi_l-\eta> \lambda.
        \end{cases}
    \end{align*}
    Since $\hat{\boldsymbol{p}}(\boldsymbol{y})\ge 0$, we can equivalently add constraints $r_l\ge \psi_l-\eta,\ r_l\ge -\lambda,\ \psi_l-\eta\le \lambda$. This completes the proof.
\end{proof}

\begin{proof}[Proof of Theorem \ref{thm:DRO-ChiDistance}]
    When $\phi(t)=\frac{1}{t}(t-1)^2$, we have
    \begin{align*}
        \phi^*(s)=\begin{cases}
            2-2\sqrt{1-s},\ s\le 1,\\
            +\infty,\ s> 1.
        \end{cases}
    \end{align*}
    Denoting $r_l=\lambda \sqrt{1-\frac{\psi_l-\eta}{\lambda}}$ where $\psi_l-\eta\le \lambda$, we have
    \begin{align*}
        \sqrt{r_l^2+\frac{1}{4}(\psi_l-\eta)^2}=\sqrt{\lambda^2-\lambda(\psi_l-\eta)+\frac{1}{4}(\psi_l-\eta)^2}=|\lambda-\frac{1}{2}(\psi_l-\eta)|=\lambda-\frac{1}{2}(\psi_l-\eta)
    \end{align*}
   This completes the proof.
\end{proof}

\section{Reformulation Results for Wasserstein Ambiguity Sets with Constraint Uncertainty in the Second-stage Problem}\label{append:constraint}
In this appendix, we present reformulation results for Wasserstein ambiguity set with constraint uncertainty in the second-stage problem to complement our results in Section \ref{sec:distance}. To this end, we first make the following assumption for this setting.
\begin{assumption}[Sufficiently Expensive Recourse] \label{assumption:sufficiently}
A two-stage distributionally robust linear program \eqref{model:DRO} has sufficiently expensive recourse if for any $\boldsymbol{\xi}\in\Xi$, the dual program of the second-stage LP \eqref{eq:second-stage} is feasible.
\end{assumption}
We present a reformulation under variation distance and Wasserstein ambiguity set with constraint uncertainty in the next theorem by leveraging the results obtained in \cite{hanasusanto2018conic}.
\begin{theorem}[Variation Distance + Wasserstein-based + Constraint Uncertainty]
    Suppose $\boldsymbol{Q}=0$, $\Xi_l=\mathbb{R}$ and Assumption \ref{assumption:sufficiently} holds. Then the multimodal ${\rm D^3RO}$ model \eqref{model:DRO} with variation distance set $\Delta(\hat{p}(\boldsymbol{y}))$ defined in \eqref{eq:modeDist-VariationDistance} and Wasserstein ambiguity set $\mathcal{U}_l(\boldsymbol{y})$ defined in \eqref{eq:distance-based} can be tractable for $q=1$ and admits the following equivalent formulation:
    \begin{align*}
    \min_{\boldsymbol{y},\lambda,\eta,\boldsymbol{\psi}} \quad &\boldsymbol{c}^{\mathsf T}\boldsymbol{y}+\eta+\rho\lambda+\sum_{l=1}^L\hat{p}_l(\boldsymbol{y})r_l\\
        \text{s.t.}\quad & \boldsymbol{y}\in\mathcal{Y},\ \lambda\ge 0\\
        & r_l\ge \epsilon_l\gamma_l+\frac{1}{K_l}\sum_{k=1}^{K_l}\boldsymbol{q}^{\mathsf T}\boldsymbol{x}_{lk}-\eta, \ \forall l=1,\ldots, L\\
        & r_l\ge -\lambda,\ \forall l=1,\ldots, L\\
        & \epsilon_l\gamma_l+\frac{1}{K_l}\sum_{k=1}^{K_l}\boldsymbol{q}^{\mathsf T}\boldsymbol{x}_{lk}-\eta\le \lambda,\ \forall l=1,\ldots, L\\
       &\boldsymbol{T}(\boldsymbol{y})\hat{\boldsymbol{\xi}}^T_{lk}(\boldsymbol{y})+W\boldsymbol{x}_{lk}\ge \boldsymbol{R}(\boldsymbol{y}),\ \forall k=1,\ldots,K_l,\ l=1,\ldots,L\\
        & \boldsymbol{q}^{\mathsf T}\boldsymbol{\phi}_{ln}\le \gamma_l,\ \forall n=1,\ldots,N,\ l=1,\ldots, L\\
        & \boldsymbol{q}^{\mathsf T}\boldsymbol{\psi}_{ln}\le \gamma_l,\ \forall n=1,\ldots,N,\ l=1,\ldots, L\\
        & \boldsymbol{T}(\boldsymbol{y})\boldsymbol{e}_n\le \boldsymbol{W}\boldsymbol{\phi}_{ln},\ \forall n=1,\ldots,N,\ l=1,\ldots, L\\
        & -\boldsymbol{T}(\boldsymbol{y})\boldsymbol{e}_n\le \boldsymbol{W}\boldsymbol{\psi}_{ln},\ \forall n=1,\ldots,N,\ l=1,\ldots, L\\
        & \boldsymbol{y}\in\mathcal{Y},\ \boldsymbol{x}_{lk}\in\mathbb{R}^J,\  \tau\in\mathbb{R},\ \boldsymbol{\mu}\in\mathbb{R}_+^L,\ \boldsymbol{\kappa}\in\mathbb{R}_-^L,\ \forall k=1,\ldots,K_l,\ l=1,\ldots, L\\ &\boldsymbol{\phi}_{ln},\boldsymbol{\psi}_{ln}\in \mathbb{R}^{J}, \ \forall n=1,\ldots,N,\ l=1,\ldots, L.
    \end{align*}
\end{theorem}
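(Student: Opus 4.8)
The plan is to strip off the two layers of $\Theta(\boldsymbol{y})$ in sequence. First I would apply Theorem~\ref{thm:DRO-VariationDistance} to the variation-distance layer $\Delta(\hat p(\boldsymbol{y}))$: Model~\eqref{model:DRO} is then equivalent to minimizing $\boldsymbol{c}^{\mathsf T}\boldsymbol{y}+\eta+\rho\lambda+\sum_{l=1}^L\hat p_l(\boldsymbol{y})r_l$ over $\boldsymbol{y}\in\mathcal{Y}$, $\lambda\ge 0$, $r_l\ge\psi_l-\eta$, $r_l\ge-\lambda$, $\psi_l-\eta\le\lambda$, where $\psi_l=\max_{\mathbb{P}_l\in\mathcal{U}_l(\boldsymbol{y})}\mathbb{E}_{\mathbb{P}_l}[h(\boldsymbol{y},\boldsymbol{\xi})]$ is the worst-case mode-$l$ second-stage expectation over the Wasserstein ball \eqref{eq:distance-based}. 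Everything therefore reduces to an explicit reformulation of each $\psi_l$ under constraint-only uncertainty.

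Second, I would analyze $h_y(\boldsymbol{\xi}):=h(\boldsymbol{y},\boldsymbol{\xi})$ for fixed $\boldsymbol{y}$. With $\boldsymbol{Q}=0$, $h_y(\boldsymbol{\xi})=\min_{\boldsymbol{x}}\{\boldsymbol{q}^{\mathsf T}\boldsymbol{x}: W\boldsymbol{x}\ge\boldsymbol{R}(\boldsymbol{y})-\boldsymbol{T}(\boldsymbol{y})\boldsymbol{\xi}\}$ is the value function of an LP in the right-hand side, hence convex and piecewise linear in $\boldsymbol{\xi}$; Assumption~\ref{assump:complete-recourse} makes the primal feasible and Assumption~\ref{assumption:sufficiently} makes the dual feasible, so $h_y$ is real-valued on $\Xi_l=\mathbb{R}^N$. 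LP duality gives $h_y(\boldsymbol{\xi})=\max_{\boldsymbol{\omega}\in\bar\Omega}(\boldsymbol{R}(\boldsymbol{y})-\boldsymbol{T}(\boldsymbol{y})\boldsymbol{\xi})^{\mathsf T}\boldsymbol{\omega}$ with $\bar\Omega=\{\boldsymbol{\omega}\ge 0:\ W^{\mathsf T}\boldsymbol{\omega}=\boldsymbol{q}\}$, so every subgradient of $h_y$ has the form $-\boldsymbol{T}(\boldsymbol{y})^{\mathsf T}\boldsymbol{\omega}$ with $\boldsymbol{\omega}\in\bar\Omega$, and the Lipschitz modulus of $h_y$ with respect to $\|\cdot\|_1$ equals $\max_{\boldsymbol{\omega}\in\bar\Omega}\|\boldsymbol{T}(\boldsymbol{y})^{\mathsf T}\boldsymbol{\omega}\|_\infty$.

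Third, I would invoke the Wasserstein strong-duality result (\cite{esfahani2018data}, Theorem~4.2, equivalently Model~\eqref{model:WassersteinDual}, together with the conic value-function reformulation of \cite{hanasusanto2018conic} for two-stage Wasserstein DRO with constraint uncertainty): for the $1$-Wasserstein ball on $\mathbb{R}^N$, $\psi_l=\inf_{\gamma_l\ge 0}\{\epsilon_l\gamma_l+\frac{1}{K_l}\sum_{k}\sup_{\boldsymbol{\xi}}(h_y(\boldsymbol{\xi})-\gamma_l\|\boldsymbol{\xi}-\hat{\boldsymbol{\xi}}_{lk}(\boldsymbol{y})\|_1)\}$. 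Since $h_y$ is Lipschitz in $\|\cdot\|_1$, the inner supremum equals $h_y(\hat{\boldsymbol{\xi}}_{lk}(\boldsymbol{y}))=\min_{\boldsymbol{x}_{lk}}\{\boldsymbol{q}^{\mathsf T}\boldsymbol{x}_{lk}:\ \boldsymbol{T}(\boldsymbol{y})\hat{\boldsymbol{\xi}}_{lk}(\boldsymbol{y})+W\boldsymbol{x}_{lk}\ge\boldsymbol{R}(\boldsymbol{y})\}$ whenever $\gamma_l$ is at least the Lipschitz modulus, and $+\infty$ otherwise. The constraint $\gamma_l\ge\max_{\boldsymbol{\omega}\in\bar\Omega}\|\boldsymbol{T}(\boldsymbol{y})^{\mathsf T}\boldsymbol{\omega}\|_\infty$ is where $q=1$ is used: it decomposes coordinatewise into $\max_{\boldsymbol{\omega}\in\bar\Omega}\pm\,\boldsymbol{\omega}^{\mathsf T}\boldsymbol{T}(\boldsymbol{y})\boldsymbol{e}_n\le\gamma_l$ for all $n$, and dualizing each of these LPs in $\boldsymbol{\omega}$ produces the variables $\boldsymbol{\phi}_{ln},\boldsymbol{\psi}_{ln}\in\mathbb{R}^J$ together with $\boldsymbol{T}(\boldsymbol{y})\boldsymbol{e}_n\le W\boldsymbol{\phi}_{ln}$, $-\boldsymbol{T}(\boldsymbol{y})\boldsymbol{e}_n\le W\boldsymbol{\psi}_{ln}$, $\boldsymbol{q}^{\mathsf T}\boldsymbol{\phi}_{ln}\le\gamma_l$, $\boldsymbol{q}^{\mathsf T}\boldsymbol{\psi}_{ln}\le\gamma_l$. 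Substituting $\psi_l=\inf\{\epsilon_l\gamma_l+\frac{1}{K_l}\sum_k\boldsymbol{q}^{\mathsf T}\boldsymbol{x}_{lk}:\ \ldots\}$ back into the step-one model and noting that $\psi_l$ enters only through $r_l\ge\psi_l-\eta$ and $\psi_l-\eta\le\lambda$ (with the objective nondecreasing in $r_l$) lets me drop the infimum and obtain the stated formulation, the remaining variable domains being routine bookkeeping.

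The main obstacle is Step~3: justifying that under Assumption~\ref{assumption:sufficiently} the worst-case expectation is finite with exactly the stated Lipschitz characterization, and carrying out the per-coordinate LP duality correctly. Two subtleties deserve care. (i) The collapse of the inner supremum to $h_y(\hat{\boldsymbol{\xi}}_{lk}(\boldsymbol{y}))$ relies on $\Xi_l=\mathbb{R}^N$ (so the support-function term vanishes) and on the Lipschitz bound; for general $q$ the relevant dual norm is $\|\cdot\|_{q^*}$ with $q^*<\infty$, and requiring $\|\boldsymbol{T}(\boldsymbol{y})^{\mathsf T}\boldsymbol{\omega}\|_{q^*}\le\gamma_l$ \emph{uniformly over $\boldsymbol{\omega}\in\bar\Omega$} is a semi-infinite conic constraint rather than finitely many linear ones --- this is precisely why tractability is claimed only for $q=1$, where $q^*=\infty$ separates across coordinates (contrast the objective-uncertainty case of Theorem~\ref{thm:VariationWasserstein_Obj}, where the norm argument is a single decision expression and any $q^*$ is handled by one conic constraint). (ii) One must verify that relatively complete recourse over all of $\mathbb{R}^N$ forces the extreme rays of $\bar\Omega$ to be harmless, so that $h_y$ is genuinely real-valued and its $\|\cdot\|_1$-Lipschitz modulus is attained over $\bar\Omega$, making each coordinate LP bounded with a feasible dual. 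Once these points are settled, assembling the per-mode blocks completes the argument.
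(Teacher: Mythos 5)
Your proposal is correct, and it reaches the stated formulation, but it takes a more self-contained route than the paper: the paper's entire proof is to combine Theorem \ref{thm:DRO-VariationDistance} (which you also use for the outer variation-distance layer) with a direct citation of Theorem 6 in \cite{hanasusanto2018conic} for the worst-case expectation under constraint uncertainty, whereas you re-derive that cited result from first principles --- LP duality for $h_y$, the \cite{esfahani2018data} strong-duality form of $\psi_l$, the sup--sup exchange showing the inner supremum collapses to $h_y(\hat{\boldsymbol{\xi}}_{lk}(\boldsymbol{y}))$ precisely when $\sup_{\boldsymbol{\omega}\in\bar\Omega}\|\boldsymbol{T}(\boldsymbol{y})^{\mathsf T}\boldsymbol{\omega}\|_\infty\le\gamma_l$, and per-coordinate dualization of that condition to produce $\boldsymbol{\phi}_{ln},\boldsymbol{\psi}_{ln}$. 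What the paper's approach buys is brevity; what yours buys is transparency about exactly where $q=1$ (coordinatewise separation of the $\ell_\infty$ dual norm), $\Xi_l=\mathbb{R}^N$ (vanishing support-function term), Assumption \ref{assumption:sufficiently} (dual feasibility, hence finiteness of $h_y$), and relatively complete recourse (recession directions of $\bar\Omega$ satisfy $\boldsymbol{T}(\boldsymbol{y})^{\mathsf T}\boldsymbol{d}=0$, so the norm bound is attained at vertices) enter --- points the citation leaves implicit. Two small remarks: the exact finiteness threshold in the exchange argument is $\sup_{\boldsymbol{\omega}\in\bar\Omega}\|\boldsymbol{T}(\boldsymbol{y})^{\mathsf T}\boldsymbol{\omega}\|_\infty$, which can strictly exceed the true Lipschitz modulus of $h_y$ (only optimal $\boldsymbol{\omega}$'s yield subgradients), so phrase the collapse in terms of that supremum rather than the Lipschitz constant --- your derived constraints are the right ones either way; and the stray variables $\tau,\boldsymbol{\mu},\boldsymbol{\kappa}$ in the theorem's domain line are vestigial from the cited formulation and do not arise in your derivation, which is consistent with their playing no role in the constraints.
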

\begin{proof}
    Combining Theorem \ref{thm:DRO-VariationDistance} with Theorem 6 in \cite{hanasusanto2018conic} yields the desired result.
\end{proof}
Next, we present the result for $\chi^2$-distance and Wasserstein ambiguity set with constraint uncertainty.
\begin{theorem}[$\chi^2$-Distance + Wasserstein-based + Constraint Uncertainty]
    Suppose $\boldsymbol{Q}=0$,  $\Xi_l=\mathbb{R}$ and Assumption \ref{assumption:sufficiently} holds. Then the multimodal ${\rm D^3RO}$ model \eqref{model:DRO} with $\chi^2$-distance set $\Delta(\hat{p}(\boldsymbol{y}))$ defined in \eqref{eq:modeDist-ChiSquare} and Wasserstein ambiguity set $\mathcal{U}_l(\boldsymbol{y})$ defined in \eqref{eq:distance-based} can be tractable for $q=1$ and admits the following equivalent formulation:
    \begin{align*}
    \min_{\boldsymbol{y},\lambda,\eta,\boldsymbol{\psi}} \quad &\boldsymbol{c}^{\mathsf T}\boldsymbol{y}+\eta+\rho\lambda+2\lambda-2\sum_{l=1}^L\hat{p}_l(\boldsymbol{y})r_l\\
        \text{s.t.}\quad & \boldsymbol{y}\in\mathcal{Y},\ \lambda\ge 0\\
        & \sqrt{r_l^2+\frac{1}{4}(\psi_l-\eta)^2}\le \lambda-\frac{1}{2}(\psi_l-\eta), \ \forall l=1,\ldots, L\\
        & \psi_l-\eta\le \lambda, \ \forall l=1,\ldots, L\\
        & \epsilon_l\gamma_l+\frac{1}{K_l}\sum_{k=1}^{K_l}\boldsymbol{q}^{\mathsf T}\boldsymbol{x}_{lk}\le \psi_l,\ \forall l=1,\ldots, L\\
       &\boldsymbol{T}(\boldsymbol{y})\hat{\boldsymbol{\xi}}^T_{lk}(\boldsymbol{y})+W\boldsymbol{x}_{lk}\ge \boldsymbol{R}(\boldsymbol{y}),\ \forall k=1,\ldots,K_l,\ l=1,\ldots,L\\
        & \boldsymbol{q}^{\mathsf T}\boldsymbol{\phi}_{ln}\le \gamma_l,\ \forall n=1,\ldots,N,\ l=1,\ldots, L\\
        & \boldsymbol{q}^{\mathsf T}\boldsymbol{\psi}_{ln}\le \gamma_l,\ \forall n=1,\ldots,N,\ l=1,\ldots, L\\
        & \boldsymbol{T}(\boldsymbol{y})\boldsymbol{e}_n\le \boldsymbol{W}\boldsymbol{\phi}_{ln},\ \forall n=1,\ldots,N,\ l=1,\ldots, L\\
        & -\boldsymbol{T}(\boldsymbol{y})\boldsymbol{e}_n\le \boldsymbol{W}\boldsymbol{\psi}_{ln},\ \forall n=1,\ldots,N,\ l=1,\ldots, L\\
       &\boldsymbol{\phi}_{ln},\boldsymbol{\psi}_{ln}\in \mathbb{R}^{J}, \ \forall n=1,\ldots,N,\ l=1,\ldots, L.
    \end{align*}
\end{theorem}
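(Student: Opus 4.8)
The plan is to reduce the theorem to a composition of two building blocks already available in the paper: the $\chi^2$-divergence reformulation of the outer (mode-probability) layer from Theorem \ref{thm:DRO-ChiDistance}, and a tractable reformulation of the inner per-mode worst-case expectation $\psi_l=\max_{\mathbb{P}_l\in\mathcal{U}_l(\boldsymbol{y})}\mathbb{E}_{\mathbb{P}_l}[h(\boldsymbol{y},\boldsymbol{\xi})]$ over the Type-1 Wasserstein ball, specialized to the constraint-uncertainty case $\boldsymbol{Q}=0$. First I would invoke Theorem \ref{thm:DRO-ChiDistance} to write the multimodal $\rm{D^3RO}$ model \eqref{model:DRO} in the form of Model \eqref{model:ChiDistance}, so that the only remaining nonstandard ingredient is Constraint \eqref{eq:WorstCaseExpectation}, i.e., each $\psi_l$ must upper bound the worst-case expected recourse cost within mode $l$. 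Because $\boldsymbol{Q}=0$, the second-stage objective $(\boldsymbol{Q}\boldsymbol{\xi}+\boldsymbol{q})^{\mathsf T}\boldsymbol{x}$ reduces to $\boldsymbol{q}^{\mathsf T}\boldsymbol{x}$ and the uncertainty enters only the right-hand side through $\boldsymbol{T}(\boldsymbol{y})\boldsymbol{\xi}$; under Assumption \ref{assumption:sufficiently} the dual of \eqref{eq:second-stage} is feasible for every $\boldsymbol{\xi}$, so $h(\boldsymbol{y},\cdot)$ is a finite-valued convex piecewise-linear function of $\boldsymbol{\xi}$ on $\Xi_l=\mathbb{R}$. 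This is exactly the structural setting analyzed in \cite{hanasusanto2018conic}.

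Next I would apply Theorem 6 of \cite{hanasusanto2018conic} with $q=1$ (so that the relevant dual norm is $\|\cdot\|_\infty$) to the per-mode program $\psi_l=\sup_{\mathcal{W}_1(\mathbb{P}_l,\hat{\mathbb{P}}_l(\boldsymbol{y}))\le\epsilon_l}\mathbb{E}_{\mathbb{P}_l}[h(\boldsymbol{y},\boldsymbol{\xi})]$. Concretely, this replaces the abstract worst-case expectation by: a primal recourse decision $\boldsymbol{x}_{lk}$ for every empirical support point $\hat{\boldsymbol{\xi}}_{lk}(\boldsymbol{y})$, feasible for the realized right-hand side via $\boldsymbol{T}(\boldsymbol{y})\hat{\boldsymbol{\xi}}_{lk}(\boldsymbol{y})+W\boldsymbol{x}_{lk}\ge\boldsymbol{R}(\boldsymbol{y})$; a Lipschitz-type budget variable $\gamma_l$; and, because the $\|\cdot\|_\infty$ dual norm decomposes coordinatewise, auxiliary dual blocks $\boldsymbol{\phi}_{ln},\boldsymbol{\psi}_{ln}$ for each coordinate $n=1,\dots,N$ (with $\boldsymbol{e}_n$ the $n$-th unit vector) enforcing $\boldsymbol{q}^{\mathsf T}\boldsymbol{\phi}_{ln}\le\gamma_l$, $\boldsymbol{q}^{\mathsf T}\boldsymbol{\psi}_{ln}\le\gamma_l$, $\boldsymbol{T}(\boldsymbol{y})\boldsymbol{e}_n\le W\boldsymbol{\phi}_{ln}$, and $-\boldsymbol{T}(\boldsymbol{y})\boldsymbol{e}_n\le W\boldsymbol{\psi}_{ln}$. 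The resulting value is $\psi_l=\epsilon_l\gamma_l+\frac{1}{K_l}\sum_{k=1}^{K_l}\boldsymbol{q}^{\mathsf T}\boldsymbol{x}_{lk}$. Substituting this representation into the relaxed constraint $\epsilon_l\gamma_l+\frac{1}{K_l}\sum_{k}\boldsymbol{q}^{\mathsf T}\boldsymbol{x}_{lk}\le\psi_l$ and keeping the conic constraints $\sqrt{r_l^2+\tfrac14(\psi_l-\eta)^2}\le\lambda-\tfrac12(\psi_l-\eta)$ and $\psi_l-\eta\le\lambda$ inherited from the $\chi^2$ layer of Model \eqref{model:ChiDistance} yields exactly the claimed formulation; one checks that replacing the equality $\psi_l=\max(\cdot)$ by the inequality $\psi_l\ge\max(\cdot)$ is without loss at optimality since $r_l$, which enters the objective with weight $-2\hat{p}_l(\boldsymbol{y})$, is monotone in $\psi_l$ through the conic constraint.

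Finally, I would verify the two subtle points. The first is that the hypotheses of Theorem 6 in \cite{hanasusanto2018conic} transfer: one needs $\hat{\mathbb{P}}_l(\boldsymbol{y})$ well defined with a nonempty ball and the inner supremum finite, both guaranteed here by construction together with Assumption \ref{assumption:sufficiently}. The second, which I expect to be the main obstacle, is handling the decision-dependence: the reference measure is supported on the decision-dependent points $\hat{\boldsymbol{\xi}}_{lk}(\boldsymbol{y})$ and the data $\boldsymbol{T}(\boldsymbol{y}),\boldsymbol{R}(\boldsymbol{y})$ depend on $\boldsymbol{y}$, whereas Theorem 6 of \cite{hanasusanto2018conic} is stated for a fixed distribution. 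I would argue that since the inner worst-case problem is evaluated separately for each fixed feasible $\boldsymbol{y}\in\mathcal{Y}$, the conic reformulation of $\psi_l$ holds pointwise in $\boldsymbol{y}$, and the auxiliary variables $\boldsymbol{x}_{lk},\gamma_l,\boldsymbol{\phi}_{ln},\boldsymbol{\psi}_{ln}$ can then be lifted into the joint program as ordinary decision variables, so that the outer minimization over $\boldsymbol{y}$ may be interchanged with the epigraph constraints without changing the optimal value.
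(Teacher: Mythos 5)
Your proposal is correct and follows essentially the same route as the paper, which proves this result by combining the $\chi^2$-divergence reformulation of Theorem \ref{thm:DRO-ChiDistance} with Theorem 6 of \cite{hanasusanto2018conic} applied mode-by-mode to the Wasserstein worst-case expectation under constraint uncertainty. Your additional verification of the hypotheses and the pointwise-in-$\boldsymbol{y}$ treatment of the decision-dependent data is a sensible elaboration of the same argument rather than a different approach.
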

\begin{proof}
    Combining Theorem \ref{thm:DRO-ChiDistance} with Theorem 6 in \cite{hanasusanto2018conic} yields the desired result.
\end{proof}

\section{Tractable Formulations under Special Cases}
\label{sec:AppendixSpecialCaseReformulations}

In this section, we provide monolithic mixed-integer linear programming reformulations of the two-stage multimodal $\rm{D^3RO}$ model \eqref{model:DRO} under variation distance based multimodal ambiguity with moment-based and distance-based ambiguity sets for each mode distribution, when the first-stage decisions $y$ are binary and an affine function in terms of the first-stage decisions is considered to represent $\hat{{p}}_l(\boldsymbol{y})$ for each mode $l=1,\cdots,L$ as proposed in Section \ref{sec:ModeAffineDependence} while considering the special cases proposed in Sections \ref{sec:moment} and \ref{sec:distance}.


We first derive the reformulation under the moment-based ambiguity setting corresponding to the distribution of each mode.  

\begin{theorem}
If for any feasible $\boldsymbol{y}\in\mathcal{Y} \subseteq \{0,1\}^I$, the ambiguity set $\mathcal{U}_l(\boldsymbol{y})$ defined in \eqref{eq:moment-based} is always non-empty, then the multimodal $\rm{D^3RO}$ model \eqref{model:DRO} with variation distance set $\Delta(\hat{p}(\boldsymbol{y}))$ defined in \eqref{eq:modeDist-VariationDistance} and $\hat{{p}}_l(\boldsymbol{y})$ defined in \eqref{eq:ModeAffineDependence} and moment-based ambiguity set $\mathcal{U}_l(\boldsymbol{y})$ defined in \eqref{eq:moment-based} with decision-dependent moment functions defined in Section \ref{sec:SpecialCases-MomentBased} is equivalent to the following mixed-integer linear program:
\begin{subequations}\label{model:variation+moment-special-MILP}
    \begin{align}   \min_{\boldsymbol{y},\lambda,\eta,\underline{\boldsymbol{\beta}}_l,\bar{\boldsymbol{\beta}}_l} \quad &\boldsymbol{c}^{\mathsf T}\boldsymbol{y}+\eta+\rho\lambda+\sum_{l=1}^L (\bar{p}_l r_l + \sum_{i=1}^I \lambda_{l,i}^p \varphi_{l,i})\\
        \text{s.t.}\quad & \boldsymbol{y}\in\mathcal{Y},\ \lambda,\ \underline{\boldsymbol{\beta}}_l,\ \bar{\boldsymbol{\beta}}_l\ge 0,\ \forall l=1,\ldots, L\\
        & \alpha_l+\sum_{n=1}^N\bar{\beta}_{l,n}(\bar\mu_{l,n}+\epsilon_{l,n}^{\mu})+\sum_{n=1}^N\sum_{i=1}^I\lambda_{l,n,i}^{\mu}\bar\mu_{l,n} \bar{z}_{l,n,i} +\sum_{n=1}^N\bar{\beta}_{l,N+n}(\bar\mu_{l,n}^2+\bar\sigma_{l,n}^2)\bar{\epsilon}^S_{l,n}\nonumber\\
        &+\sum_{n=1}^N\sum_{i=1}^I\lambda^S_{l,n,i}\bar{\epsilon}^S_{l,n}(\bar\mu_{l,n}^2+\bar\sigma_{l,n}^2)\bar{z}_{l,N+n,i}-\sum_{n=1}^N\underline{\beta}_{l,1+n}(\bar\mu_{l,n}-\epsilon_{l,n}^{\mu})-\sum_{n=1}^N\sum_{i=1}^I\lambda_{l,n,i}^{\mu}\bar\mu_{l,n}\underline{z}_{l,n,i}\nonumber\\
\quad& -\sum_{n=1}^N\underline{\beta}_{l,N+n}(\bar\mu_{l,n}^2+\bar\sigma_{l,n}^2)\underline{\epsilon}^S_{l,n}-\sum_{n=1}^N\sum_{i=1}^I\lambda^S_{l,n,i}\underline{\epsilon}^S_{l,n}(\bar\mu_{l,n}^2+\bar\sigma_{l,n}^2)\underline{z}_{l,N+n,i}-\eta\le \min\{r_l,\lambda\},\nonumber\\
&\hspace{29em}\forall l=1,\ldots,L\\
         & r_l\ge -\lambda,\ \forall l=1,\ldots, L\\
       & \alpha_l+\sum_{n\in [N]}\xi_{n}^k(\bar{\beta}_{l,n}-\underline{\beta}_{l,n})+\sum_{n\in[N]}(\xi_{n}^k)^2(\bar{\beta}_{l,N+n}-\underline{\beta}_{l,N+n})\ge  (\boldsymbol{Q}\boldsymbol{\xi}^k+\boldsymbol{q})^{\mathsf T}\boldsymbol{x}_{k},\nonumber\\
       &\hspace{23em}\forall l=1,\ldots, L,\ k=1,\ldots,K, \\
       & \boldsymbol{T}(\boldsymbol{y})\boldsymbol{\xi}^k+W\boldsymbol{x}_k\ge \boldsymbol{R}(\boldsymbol{y}),\ \forall k=1,\ldots,K, \\
       & (\varphi_{l,i}, r_l, y_{i})\in {\color{black}\mathcal{MC}}_{(l^{r}_l,u^{r}_l)}, \quad \forall l=1,\ldots, L, i = 1,\cdots, I, \\
       & (\bar{z}_{l,n,i}, \bar{\beta}_{l,n}, y_{i})\in {\color{black}\mathcal{MC}}_{(l^{\bar{\beta}}_{l,n},u^{\bar{\beta}}_{l,n})}, \quad (\bar{z}_{l,N+n,i}, \bar{\beta}_{l,N+n}, y_{i})\in {\color{black}\mathcal{MC}}_{(l^{\bar{\beta}}_{l,N+n},u^{\bar{\beta}}_{l,N+n})} ,\nonumber\\
       &\hspace{19em} \forall l=1,\ldots, L, i = 1,\cdots, I, n\in[N],\\
          & (\underline{z}_{l,n,i}, \underline{\beta}_{l,n}, y_{i})\in {\color{black}\mathcal{MC}}_{(l^{\underline{\beta}}_{l,n},u^{\underline{\beta}}_{l,n})}, \quad (\underline{z}_{l,N+n,i}, \bar{\beta}_{l,N+n}, y_{i})\in {\color{black}\mathcal{MC}}_{(l^{\underline{\beta}}_{l,N+n},u^{\underline{\beta}}_{l,N+n})} ,\nonumber\\
       &\hspace{19em} \forall l=1,\ldots, L, i = 1,\cdots, I, n\in[N],
    \end{align}
    \end{subequations}
where $r_l \in [l^{r}_l,u^{r}_l]$, $\bar{\beta}_{l,n}\in [l^{\bar{\beta}}_{l,n},u^{\bar{\beta}}_{l,n}]$, $\bar{\beta}_{l,N+n}\in [l^{\bar{\beta}}_{l,N+n},u^{\bar{\beta}}_{l,N+n}]$, $\underline{\beta}_{l,n}\in [l^{\underline{\beta}}_{l,n},u^{\underline{\beta}}_{l,n}]$, $\underline{\beta}_{l,N+n}\in [l^{\underline{\beta}}_{l,N+n},u^{\underline{\beta}}_{l,N+n}]$. 
\end{theorem}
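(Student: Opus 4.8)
The plan is to obtain \eqref{model:variation+moment-special-MILP} from the special-case reformulation \eqref{model:variation+moment-special} of Section~\ref{sec:SpecialCases-MomentBased} (which itself follows from Theorem~\ref{thm:MomentBasedVariation} specialized to the decision-dependent first- and second-moment functions), by carrying out three exact transformations: linearizing the bilinear terms in the objective, linearizing the bilinear terms inside Constraint~\eqref{eq:Variation+Moment-Bilinear}, and converting the value-function appearances $h(\boldsymbol{y},\boldsymbol{\xi}^k)$ into an epigraph block with explicit second-stage variables. Each step preserves both the feasible set and the optimal value under the hypotheses $\mathcal{Y}\subseteq\{0,1\}^I$ and relatively complete recourse, so chaining them yields the claimed equivalence.

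First I would substitute $\hat{p}_l(\boldsymbol{y})=\bar{p}_l+(\boldsymbol{\lambda}_l^p)^{\mathsf T}\boldsymbol{y}$ from \eqref{eq:ModeAffineDependence} into the objective of \eqref{model:variation+moment-special}, so that $\sum_l\hat{p}_l(\boldsymbol{y})r_l=\sum_l\bar{p}_l r_l+\sum_l\sum_i\lambda_{l,i}^p\,y_i r_l$, and introduce $\varphi_{l,i}$ in place of the product $y_i r_l$ together with the constraints $(\varphi_{l,i},r_l,y_i)\in\mathcal{MC}_{(l^r_l,u^r_l)}$ from \eqref{eq:McCormick}. In exactly the same manner I would replace the bilinear terms $\bar{\beta}_{l,n}y_i$, $\bar{\beta}_{l,N+n}y_i$, $\underline{\beta}_{l,n}y_i$, $\underline{\beta}_{l,N+n}y_i$ in \eqref{eq:Variation+Moment-Bilinear} by the auxiliary variables $\bar{z}_{l,n,i},\bar{z}_{l,N+n,i},\underline{z}_{l,n,i},\underline{z}_{l,N+n,i}$ with the corresponding $\mathcal{MC}$ constraints over the stated bounding boxes. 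The key point is that since each $y_i\in\{0,1\}$, the McCormick envelope of $z=\beta y_i$ is \emph{exact}, forcing $z=\beta y_i$ at every feasible (integer) point, so the resulting program is an exact reformulation rather than a relaxation; this is precisely where the binary hypothesis on $\mathcal{Y}$ enters. One also has to confirm that the intervals $[l^r_l,u^r_l]$, $[l^{\bar{\beta}}_{l,n},u^{\bar{\beta}}_{l,n}]$, etc., are genuine a priori bounds containing an optimal solution, which follows from compactness of $\mathcal{Y}$ together with attainment of a finite dual optimum in \eqref{model:MomentMatchingDual} (guaranteed by the non-empty relative interior of $\mathcal{U}_l(\boldsymbol{y})$, i.e.\ strong duality).

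Next I would eliminate $h(\boldsymbol{y},\boldsymbol{\xi}^k)$. Under the finite-support assumption $\Xi_l=\{\boldsymbol{\xi}^k\}_{k=1}^K$, the semi-infinite support constraint reduces to the finite family $\alpha_l+\sum_n\xi_n^k(\bar{\beta}_{l,n}-\underline{\beta}_{l,n})+\sum_n(\xi_n^k)^2(\bar{\beta}_{l,N+n}-\underline{\beta}_{l,N+n})\ge h(\boldsymbol{y},\boldsymbol{\xi}^k)$ over $l$ and $k$. Because $h(\boldsymbol{y},\boldsymbol{\xi}^k)$ is, by \eqref{eq:second-stage}, the optimal value of a minimization LP whose feasible region is non-empty by Assumption~\ref{assump:complete-recourse}, the requirement ``left-hand side $\ge h(\boldsymbol{y},\boldsymbol{\xi}^k)$ for all $l$'' is equivalent to the existence of a single second-stage point $\boldsymbol{x}_k$ with $\boldsymbol{T}(\boldsymbol{y})\boldsymbol{\xi}^k+W\boldsymbol{x}_k\ge\boldsymbol{R}(\boldsymbol{y})$ and ``left-hand side $\ge(\boldsymbol{Q}\boldsymbol{\xi}^k+\boldsymbol{q})^{\mathsf T}\boldsymbol{x}_k$ for all $l$''; note $\boldsymbol{x}_k$ can be shared across modes since $h(\boldsymbol{y},\boldsymbol{\xi}^k)$ does not depend on $l$. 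Adding these $\boldsymbol{x}_k$ as extra minimization variables is without loss because they enter neither the objective nor any other constraint, so at optimality $\boldsymbol{x}_k$ attains $h(\boldsymbol{y},\boldsymbol{\xi}^k)$. Collecting the objective from Step~1, the linearized version of \eqref{eq:Variation+Moment-Bilinear}, the constraints $r_l\ge-\lambda$, the epigraph block, and all the $\mathcal{MC}$ constraints yields exactly \eqref{model:variation+moment-special-MILP}; all data now enters linearly and the only integrality is on $\boldsymbol{y}$, so the formulation is an MILP.

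The main obstacle is not conceptual depth but correctness of the two reductions: (i) that the McCormick linearizations are exact, which rests entirely on $\boldsymbol{y}$ being binary (and on the bounding boxes being valid), and (ii) that replacing each $h(\boldsymbol{y},\boldsymbol{\xi}^k)$ by an epigraph with fresh second-stage variables $\boldsymbol{x}_k$ is lossless, which uses relatively complete recourse and the fact that $h$ appears on the dominated side of a $\ge$-constraint inside a minimization problem. Everything else is routine substitution and index bookkeeping.
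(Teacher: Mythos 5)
Your proposal is correct and follows essentially the same route as the paper's own (very terse) proof: specialize Model \eqref{model:variation+moment-special} with the affine mode probabilities \eqref{eq:ModeAffineDependence}, linearize the resulting bilinear products exactly via McCormick envelopes using the binarity of $\boldsymbol{y}$, and replace each $h(\boldsymbol{y},\boldsymbol{\xi}^k)$ by an epigraph block with explicit second-stage variables $\boldsymbol{x}_k$, which is lossless under relatively complete recourse and finiteness of the second-stage LP. Your additional remarks on the validity of the bounding boxes and on sharing $\boldsymbol{x}_k$ across modes are correct refinements of details the paper leaves implicit.
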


\begin{proof}
Combining Model \eqref{model:variation+moment-special} with $\hat{{p}}_l(\boldsymbol{y})$ defined in \eqref{eq:ModeAffineDependence} and McCormick envelopes to linearize the resulting bilinear terms and plugging in the second-stage function \eqref{eq:second-stage} for representing $h(\boldsymbol{y},\boldsymbol{\xi}^k)$, we obtain the desired result. 
\end{proof}


Additionally, we provide the reformulation under the distance-based ambiguity setting corresponding to the distribution of each mode under objective uncertainty of the second-stage problem.  

\begin{theorem}
    Suppose $\boldsymbol{T}(\boldsymbol{y})=0$, $\Xi_l=\{\boldsymbol{\xi}:\ \boldsymbol{C}_l\boldsymbol{\xi}\le \boldsymbol{d}_l\}$, and for any given $\boldsymbol{y}\in\mathcal{Y} \subseteq \{0,1\}^I$, the feasible region $\{\boldsymbol{x}:W\boldsymbol{x}\ge \boldsymbol{R}(\boldsymbol{y})\}$ is non-empty and compact. The two-stage multimodal $\rm{D^3RO}$ model \eqref{model:DRO} with variation distance set $\Delta(\hat{p}(\boldsymbol{y}))$ defined in \eqref{eq:modeDist-VariationDistance} and $\hat{{p}}_l(\boldsymbol{y})$ defined in \eqref{eq:ModeAffineDependence} and Wasserstein ambiguity set $\mathcal{U}_l(\boldsymbol{y})$ defined in \eqref{eq:distance-based} with decision-dependent uncertainty realizations defined in Section \ref{sec:SpecialCases-DistanceBased} 
    can be tractable for $q=1$ admits the following equivalent formulation:
    \begin{subequations}\label{model:Variation+Wasserstein-Objective-MILP}
    \begin{align}
    \min\quad &\boldsymbol{c}^{\mathsf T}\boldsymbol{y}+\eta+\rho\lambda+\sum_{l=1}^L (\bar{p}_l r_l + \sum_{i=1}^I \lambda_{l,i}^p \varphi_{l,i})\\
        \text{s.t.}\quad
         & \eqref{eq:Variation-Wasserstein-domain} - \eqref{eq:Variation-Wasserstein-domain2}, \eqref{eq:Variation-Wasserstein-Constr}, \eqref{eq:Variation-Wasserstein-dualGamma} \nonumber \\
         & \boldsymbol{q}^{\mathsf T}\boldsymbol{x}_{lk}+\sum_{j=1}^Jx_{lkj}\sum_{n=1}^NQ_{jn}\bar{\xi}_{l,k,n}+\sum_{j=1}^J\sum_{n=1}^NQ_{jn}\sum_{i=1}^I\lambda^{\xi}_{l,k,n,i}\upsilon^{x}_{lkji}\\
&+\boldsymbol{d}_l^{\mathsf T}\boldsymbol{\mu}_{lk}-\sum_{h=1}^H\mu_{lkh}\sum_{n=1}^NC_{lhn}\bar{\xi}_{l,k,n}-\sum_{h=1}^H\sum_{n=1}^NC_{lhn}\sum_{i=1}^I\lambda^{\xi}_{l,k,n,i}\upsilon^{\mu}_{lkhi}\le w_{lk},\ \forall k=1,\ldots,K_l,\ l=1,\ldots, L, \\
& (\varphi_{l,i}, r_l, y_{i})\in {\color{black}\mathcal{MC}}_{(l^{r}_l,u^{r}_l)} \quad \forall l=1,\ldots, L, i = 1,\cdots, I, \\
& (\upsilon^{x}_{lkji}, x_{lkj},y_i) \in {\color{black}\mathcal{MC}}_{(l^{x}_{lkj},u^{x}_{lkj})} \quad \forall l=1,\ldots, L, k=1,\ldots,K_l, i = 1,\cdots, I, j = 1,\cdots, J,\\
& (\upsilon^{\mu}_{lkhi},\mu_{lkh},y_i) \in {\color{black}\mathcal{MC}}_{(l^{\mu}_{lkh},u^{\mu}_{lkh})} \quad \forall l=1,\ldots, L, k=1,\ldots,K_l, i = 1,\cdots, I, h = 1, \cdots, H,
    \end{align}
    \end{subequations}
where $r_l \in [l^{r}_l,u^{r}_l]$, $x_{lkj} \in [l^{x}_{lkj},u^{x}_{lkj}]$, $\mu_{lkh} \in [l^{\mu}_{lkh},u^{\mu}_{lkh}]$.
\end{theorem}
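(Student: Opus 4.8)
The plan is to obtain Model~\eqref{model:Variation+Wasserstein-Objective-MILP} as a mechanical specialization of the generic reformulation already proved in Theorem~\ref{thm:VariationWasserstein_Obj}. Under the hypotheses $\boldsymbol{T}(\boldsymbol{y})=0$, $\Xi_l=\{\boldsymbol{\xi}:\boldsymbol{C}_l\boldsymbol{\xi}\le\boldsymbol{d}_l\}$, and compactness of $\{\boldsymbol{x}:W\boldsymbol{x}\ge\boldsymbol{R}(\boldsymbol{y})\}$, that theorem shows \eqref{model:DRO} is equivalent to Model~\eqref{model:Variation+Wasserstein-Objective} for every $q\in[1,\infty]$. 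First I would fix $q=1$: since $1/q+1/q^*=1$ forces $q^*=\infty$, the dual-norm block~\eqref{eq:Variation-Wasserstein-dualGamma} reduces to the finite family of linear inequalities $-\gamma_l\le(\boldsymbol{Q}^{\mathsf T}\boldsymbol{x}_{lk}-\boldsymbol{C}_l^{\mathsf T}\boldsymbol{\mu}_{lk})_n\le\gamma_l$. This is precisely why the statement singles out $q=1$ rather than the whole range $[1,\infty]$ available in Theorem~\ref{thm:VariationWasserstein_Obj}: it is exactly what keeps the monolithic model a MILP instead of a mixed-integer conic program.

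Next I would insert the two affine decision-dependence structures. Substituting the affine uncertainty realization $\hat\xi_{l,k,n}(\boldsymbol{y})=\bar\xi_{l,k,n}+\sum_{i=1}^I\lambda^\xi_{l,k,n,i}y_i$ of Section~\ref{sec:SpecialCases-DistanceBased} into Constraint~\eqref{eq:Variation+Wasserstein-Bilinear} yields precisely the expanded inequality displayed in Section~\ref{sec:SpecialCases-DistanceBased}, which is affine in all variables except for the products $x_{lkj}y_i$ and $\mu_{lkh}y_i$. Substituting the affine mode probability $\hat p_l(\boldsymbol{y})=\bar p_l+(\boldsymbol{\lambda}_l^p)^{\mathsf T}\boldsymbol{y}$ from~\eqref{eq:ModeAffineDependence} into the objective term $\sum_l\hat p_l(\boldsymbol{y})r_l$ rewrites it as $\sum_l\bar p_lr_l+\sum_l\sum_i\lambda^p_{l,i}\,(y_ir_l)$, introducing the products $y_ir_l$. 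Because $\boldsymbol{y}\in\mathcal{Y}\subseteq\{0,1\}^I$ is binary, each product of a binary variable with a bounded continuous variable is represented \emph{exactly} by the McCormick inequalities~\eqref{eq:McCormick}; I would therefore introduce $\varphi_{l,i}:=y_ir_l$, $\upsilon^x_{lkji}:=x_{lkj}y_i$, $\upsilon^\mu_{lkhi}:=\mu_{lkh}y_i$, append the relations $(\varphi_{l,i},r_l,y_i)\in\mathcal{MC}_{(l^r_l,u^r_l)}$, $(\upsilon^x_{lkji},x_{lkj},y_i)\in\mathcal{MC}_{(l^x_{lkj},u^x_{lkj})}$, $(\upsilon^\mu_{lkhi},\mu_{lkh},y_i)\in\mathcal{MC}_{(l^\mu_{lkh},u^\mu_{lkh})}$, and replace the bilinear monomials by these auxiliary variables in the objective and in the expanded form of~\eqref{eq:Variation+Wasserstein-Bilinear}. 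Carrying the remaining constraints~\eqref{eq:Variation-Wasserstein-domain}--\eqref{eq:Variation-Wasserstein-domain2}, \eqref{eq:Variation-Wasserstein-Constr}, and the $q=1$ form of~\eqref{eq:Variation-Wasserstein-dualGamma} through unchanged then delivers Model~\eqref{model:Variation+Wasserstein-Objective-MILP}; equivalence holds because every step is either an exact substitution or an exact binary-times-bounded McCormick reformulation.

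The only non-routine ingredient, and the part I expect to be the main obstacle, is exhibiting \emph{valid finite bounds} $[l^r_l,u^r_l]$, $[l^x_{lkj},u^x_{lkj}]$, $[l^\mu_{lkh},u^\mu_{lkh}]$ so that the McCormick envelopes are tight. For $x_{lkj}$ the bound is immediate from the assumed compactness of $\{\boldsymbol{x}:W\boldsymbol{x}\ge\boldsymbol{R}(\boldsymbol{y})\}$ together with compactness of $\mathcal{Y}$ and continuity of $\boldsymbol{R}(\cdot)$. For $r_l$ one uses $-\lambda\le r_l\le\epsilon_l\gamma_l+\frac{1}{K_l}\sum_{k=1}^{K_l}w_{lk}-\eta$ together with an a priori bound on $(\lambda,\eta,\gamma_l,w_{lk})$ obtained from finiteness of $h(\boldsymbol{y},\boldsymbol{\xi})$ and compactness of $\mathcal{Y}$, which is standard in Wasserstein DRO reformulations. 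For the support multipliers $\boldsymbol{\mu}_{lk}$ one argues that, since the worst-case expectation in~\eqref{eq:WorstCaseExpectation} is finite, an optimal $\boldsymbol{\mu}_{lk}$ may be taken at a vertex of the polyhedron $\{\boldsymbol{\mu}\ge0:\boldsymbol{C}_l^{\mathsf T}\boldsymbol{\mu}=\boldsymbol{\nu}_{lk}\}$ with $\boldsymbol{\nu}_{lk}$ confined to a bounded set, hence uniformly bounded over feasible solutions. Once these bounds are in hand, the remaining verification is purely computational.
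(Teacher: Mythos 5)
Your proposal is correct and follows essentially the same route as the paper: the paper's proof is precisely the one-line combination of Theorem~\ref{thm:VariationWasserstein_Obj} with the affine decision-dependent realizations of Section~\ref{sec:SpecialCases-DistanceBased}, the affine mode probabilities~\eqref{eq:ModeAffineDependence}, and exact McCormick linearization of the binary-times-continuous bilinear terms. Your additional care in justifying the $q=1$/$q^*=\infty$ linearization of~\eqref{eq:Variation-Wasserstein-dualGamma} and in exhibiting finite McCormick bounds is more explicit than the paper, which simply posits the bounds $[l^{r}_l,u^{r}_l]$, $[l^{x}_{lkj},u^{x}_{lkj}]$, $[l^{\mu}_{lkh},u^{\mu}_{lkh}]$ in the theorem statement.
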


\begin{proof}
Combining the reformulation in Theorem \ref{thm:VariationWasserstein_Obj} with decision-dependent uncertainty realizations defined in Section \ref{sec:SpecialCases-DistanceBased} and $\hat{{p}}_l(\boldsymbol{y})$ defined in \eqref{eq:ModeAffineDependence} with McCormick envelopes to linearize the resulting bilinear terms, we obtain the desired result. 
\end{proof}

{\color{black}
\section{Additional Numerical Results}\label{sec:appendix-additional-results}
In this section, we report the performance of the MM-M-$\rm{D^3RO}$ model with discrete support, where we vary $K$ from 300 to 500. We set the time limit for Gurobi to 3 hours and mark the instances where the MIP gap in Gurobi exceeds 100\% within the time limit by ``-''.
\begin{table}[ht!]
  \centering
  \caption{Performance of the MM-M-$\rm{D^3RO}$ with Discrete Support $K=300$ Solved by Gurobi}
    \begin{tabular}{lrrr}
    \hline
          & \multicolumn{3}{c}{K = 300}  \\
          & Time  & IS Cost   & OOS Cost\\
          \hline
    $(I,J) = (5,10)$ & 2.53  & -2,079 & -3,257  \\
    $(I,J) = (10,20)$ & 57.06 & -27,218 & -33,783  \\
    $(I,J) = (15,30)$ & 2,781 & -1,079,154 & -99,177  \\
    $(I,J) = (20,40)$ & 10,800 & -1,175,549 (18.9\%) & -203,517  \\
    $(I,J) = (25,50)$ & 10,800 & -1,160,420 (20.55\%) & -208,752 \\
    $(I,J) = (30,60)$ & - & -     & -     \\
    \hline
    \end{tabular}%
  \label{tab:addlabel}%
\end{table}%

\begin{table}[H]
  \centering
  \caption{Performance of the MM-M-$\rm{D^3RO}$ with Discrete Support $K=400$ Solved by Gurobi}
    \begin{tabular}{lrrr}
    \hline
           & \multicolumn{3}{c}{K = 400}  \\
          & Time  & IS Cost   & OOS Cost \\
          \hline
    $(I,J) = (5,10)$ &  3.52  & -2,079 & -3257  \\
    $(I,J) = (10,20)$ & 87 & -27,218 & -33,783 \\
    $(I,J) = (15,30)$ &  6,845 & -1,079,154 & -99,177 \\
    $(I,J) = (20,40)$ & 10,800 & -1,138,923 (22.87\%) & -171,909  \\
    $(I,J) = (25,50)$ & 10,800 & -1,160,420 (20.55\%) & -208,752 \\
    $(I,J) = (30,60)$ & - & -     & -    \\
    \hline
    \end{tabular}%
  \label{tab:addlabel}%
\end{table}%

\begin{table}[H]
  \centering
  \caption{Performance of the MM-M-$\rm{D^3RO}$ with Discrete Support $K=500$ Solved by Gurobi}
    \begin{tabular}{lrrr}
    \hline
        & \multicolumn{3}{c}{K = 500} \\
          &  Time  & IS Cost   & OOS Cost \\
          \hline
    $(I,J) = (5,10)$ &4.62  & -2,079 & -3,257 \\
    $(I,J) = (10,20)$ &  130 & -27.218 & -33.783 \\
    $(I,J) = (15,30)$ &  6,064 & -1,079,154 & -99,177 \\
    $(I,J) = (20,40)$ & 10,800 & -1,175,549 (18.98\%) & -203,517 \\
    $(I,J) = (25,50)$ &  -     & -     & - \\
    $(I,J) = (30,60)$ & {-} & -     & - \\
    \hline
    \end{tabular}%
  \label{tab:addlabel}%
\end{table}%
}
\end{document}